\newcommand{\comment}[1]{}
\numberwithin{equation}{section}
\newtheorem{theorem}{Theorem}[section]
\newtheorem{lemma}{Lemma}[section]
\newtheorem{proposition}{Proposition}[section]
\newtheorem{algorithm}{Algorithm}[section]
\numberwithin{equation}{section}
\theoremstyle{definition}
\newtheorem{definition}{Definition}[section]
\newtheorem{assumption}{Assumption}[section]
\DeclareMathOperator{\E}{E} \DeclareMathOperator{\var}{Var}
\DeclareMathOperator{\Tr}{Tr}
\newcommand{\beq}{\begin{eqnarray}}
\newcommand{\eeq}{\end{eqnarray}}
\newcommand{\ben}{\begin{eqnarray*}}
\newcommand{\een}{\end{eqnarray*}}
\title{A new combinatorial approach for edge universality of Wigner matrices}
 \author{
\sc Debapratim Banerjee
 \\ \small Indian Statistical Institute, Kolkata\\ debopratimbanerjee@gmail.com\\}
\begin{document}
 \maketitle
 \begin{abstract}
 In this paper we introduce a new combinatorial approach to analyze the trace of large powers of Wigner matrices. Our approach is motivated from the  paper by \citet{sosh}. However the counting approach is different. We start with classical word sentence approach similar to \citet{AZ05} and take the motivation from \citet{sinaisosh}, \citet{sosh} and \citet{peche2009universality} to encode the words to objects similar to Dyck paths. To be precise the map takes a word to a Dyck path with some edges removed from it. Using this new counting we prove edge universality for large Wigner matrices with sub-Gaussian entries. One novelty of this approach is unlike \citet{sinaisosh}, \citet{sosh} and \citet{peche2009universality} we do not need to assume the entries of the matrices are symmetrically distributed around $0$. The main technical contribution of this paper is two folded. Firstly we produce an encoding of the ``contributing words" (for definition one might look at Section \ref{sec:word}) of the Wigner matrix which retrieves the edge universality. Hence this is the best one can do. We hope this method will be applicable to many other scenarios in random matrices. Secondly in course of the paper we give a combinatorial description of the GOE Tracy Widom law. The explanation for GUE is very similar. This explanation might be important for the models where exact calculations are not available but some combinatorial structures are present.
 \end{abstract}
 \section{Introduction}
 Since the groundbreaking discovery of \citet{wig1}, Wigner matrices have been a topic of key interest in the mathematics and physics communities. Later on these matrices proved to be important for many models in engineering, high dimensional statistics and many other branches. Since the introduction of these matrices many problems regarding the eigenvalue distribution of these matrices have been solved. The results are so vast and diverse that we shall not be able to discuss all of them in this introduction. Here we mention some of them which we find relevant in the context of current paper. 
 
 In particular, a Wigner matrix is a $n \times n$ symmetric (hermitian) matrix with real (complex) entries where the entries of the upper diagonal part are i.i.d. with mean $0$ and variance $\frac{1}{n}$. One is interested in the eigenvalue distribution of the matrix when the dimension grows to infinity. 
 The study of eigenvalues of Wigner matrices started with characterizing the limiting distribution of the histogram of the eigenvalues. This is done in the seminal papers of \citet{wig1} and \citet{wig2}. It is known that this limiting distribution exists and coined as the famous semicircular distribution. In particular, it is given by the following density.
\begin{equation}\label{eq:semicircular}
 f(x)=\left\{
 \begin{array}{ll}
 \frac{1}{2\pi} \sqrt{4-x^2} & \text{when~ $|x|\le 2$}\\
 0 & \text{otherwise} 
 \end{array}
 \right.
\end{equation}  
However after specifying the spectral distribution, there has been a remarkable advancement in this topic. One of the most important direction is the universality properties of the Wigner matrices. 

In general there are two types of universality properties of Wigner matrices. The first one is the bulk universality and the second one is the edge universality. In this paper we shall mostly consider the edge universality. However for the shake of completeness we briefly describe the important results on the bulk universality of the Wigner matrices. In general bulk universality concerns about the point process at any fixed point in the interior of the semicircle distribution. When the entries of the matrices are real (complex) Gaussians, they are called GOE (GUE). The explicit eigenvalue distributions in these cases are well known. For the explicit formula in the GOE case one might look at \eqref{eq:GOEeigen}. In general, doing calculation with the exact eigenvalue formulas are considerably difficult in the GOE case than the GUE case. Using the exact formulas in the GUE case in the early days in a series of papers, Wigner, Dyson, Gaudin, and Mehta proved that under proper rescaling, the joint eigenvalue distribution or the point process at an interior point of the spectrum is described by the sine kernel. One might look at \citet{mehta1960statistical}, \citet{mehta1960density}, \citet{dyson1962statistical} and \citet{dyson1970correlations} for some references. In these papers they also conjectured that this result holds for general Wigner matrices. These results were proved for a very general class of invariant 
ensembles by \citet{deift1999strong}, \citet{deift1999uniform} , \citet{bleher1999semiclassical} and \citet{pastur2008bulk}.   
 
Later on \citet{johansson} proved bulk universality for Wigner divisible ensembles. For general Wigner matrices a new approach was introduced by Erdos, Schlein, Yau, and others. One might look at \cite{erdHos2009semicircle}, \cite{erdHos2009local}, \cite{erdHos2012spectral}, \cite{erdHos2013spectral}, \cite{erdHos2010wegner}, \cite{erdHos2011universality}, \cite{erdHos2012local}, \cite{erdHos2012bulk}, \cite{erdos2010universality}, \cite{erdHos2012rigidity} for some references.  In another different approach bulk universality was also proved by \citet{tao2009random}.  One might also look at \citet{erdHos2012universality}, \citet{erdHos2012comment} for a review on the literature.

Apart from the bulk universality a different type of universality is observed at the edge of the spectrum. Here we look at the point processes of the eigenvalues near $\pm 2$ which is the support of the semicircular distribution. Using the explicit distribution of the eigenvalues the fluctuation of the largest eigenvalue of the Wigner matrix was first proved in the seminal works of Tracy and Widom \cite{tracy1994level}, \cite{tracy1996orthogonal}. In particular it is proved that 
\begin{equation}
\mathbb{P}\left[n^{\frac{2}{3}} \left( \lambda_{(1),n}- 2 \right)\le s\right] \to F_{\beta}(s)
\end{equation}
where the Tracy-Widom distribution functions $F_{\beta}$ can be described by Painleve equations, and $\beta= 1,2,4$ corresponds to Orthogonal/Unitary/Symplectic ensemble, respectively. Here $\lambda_{(1),n}\ge \ldots \ge \lambda_{(n),n}$ are the ordered eigenvalues of the Wigner matrix $W$. The joint distribution
of $k$ largest eigenvalues can be expressed in terms of the Airy kernel, which was shown by \citet{forrester1993spectrum}. In general the joint distribution of $\left( \lambda_{(1),n},\ldots , \lambda_{(k),n} \right)$ will also converge after proper rescaling and centering. In particular 
\begin{equation}
\begin{split}
\mathbb{P}\left[ n^{\frac{2}{3}}\left( \lambda_{(1),n}-2 \right)\le s_{1},\ldots, n^{\frac{2}{
3}}\left( \lambda_{(k),n} -2\right)\le s_{k} \right] \to F_{\beta,k}(s_{1},\ldots , s_{k}). 
\end{split}
\end{equation}
The $k$ dimensional distribution $F_{\beta,k}$ will also be coined as Tracy Widom distribution.
Now coming to the universality at the edge, the first result of this kind was obtained by \citet{sosh}. He assumed that the distributions of the entries of the matrix are sub-Gaussian and symmetric. The method of this paper is combinatorial in nature and is the main inspiration of our paper. Essentially the proof analyzes the trace of a high power of the Wigner matrix. Based on a similar technique and truncation \citet{ruzmaikina2006universality} proved the universality under the assumption that entries are symmetric and the tail of the entries of the matrix decay at the rate $x^{-18}$. The universality for the non-symmetric entries was first proved by \citet{tao2010random}. Here one assumes that entries have vanishing third moment and the tail decays exponentially. Finally through a different approach initiated by Erd\H{o}s, Yau and others the vanishing third moment condition was removed. One might look at \cite{erdHos2012rigidity} and \cite{erdHos2012spectral}. In these papers the results are obtained through a detailed analysis of the Green's function of the matrix. Finally a necessary and sufficient condition for the edge universality was obtained in \citet{lee2014necessary}. This paper proves that the edge universality holds if and only if $\lim_{s\to \infty} s^{4}\mathbb{P}\left[ \left|x_{1,2}\right| \ge s \right] \to 0.$ Here $x_{1,2}= \sqrt{n}W(1,2)$.

Before moving to the next section, we spend a few moments about the novelty of the current paper. As mentioned earlier we use the combinatorial approach initiated by \citet{sosh}. However unlike \citet{sosh} we do not need to assume that entries are symmetrically distributed. This is done by a very refined encoding of the contributing words defined in section \ref{sec:word}. In particular the encoding in this paper out performs the encoding by  \citet{furedi1981eigenvalues}, \citet{vu2005spectral} and \citet{peche2007wigner}. To the best of our limited knowledge this is the first paper to establish the edge universality for general non-symmetric entry distribution through combinatroial methods. We also hope the counting strategy introduced in this paper will be useful in many other different scenarios. On the other hand, by the methods in this paper we have been able to prove a combinatorial description of the Tracy Widom law. This method might be useful to characterize the Tracy Widom law when the exact calculation is not available.
 \section{The model}
 In this section we introduce the matrix ensembles. Firstly we start with the definition of Wigner matrices. 
 \begin{definition}\label{def:wig}
 We call a matrix $W= \left( x_{i,j}/ \sqrt{n} \right)_{1\le i,j \le n}$ to be a Wigner matrix if $x_{i,j}=\bar{x}_{j,i}$, $\left(x_{i,j}\right)_{1\le i<j\le n}$ are i.i.d., $\E[x_{i,j}]=0$ and $\E[|x_{i,j}|^2]=1$. 
 \end{definition}
 In this paper we deal with the real symmetric matrices and for the ease of calculation we scale the whole matrix by a factor $2$. With slight abuse of notation we shall also call this matrix a Wigner matrix and denote it by $W$. Following are the assumptions of the matrices we consider.
 \begin{assumption}\label{ass:wig}
 We consider a matrix $W$ given by $W= (x_{i,j}/\sqrt{n})_{1\le i,j \le n}$ such that the following conditions are satisfied:
 \begin{enumerate}[(i)]
 \item $x_{i,j}=x_{j,i}$ for $i\le j$.
 \item $\var(x_{i,j})=\frac{1}{4}$
 \item $\left(x_{i,j}\right)_{1\le i<j \le n}$ are i.i.d.
 \item $\E[x_{i,j}^{2k}]\le \left( \mathrm{const} .k \right)^{k}$ $\forall ~ k \in \mathbb{N}$ 
 %\item $k$ is fixed, $u_{i}$'s are mutually orthonormal vectors and $\frac{1}{2}> h_{1}>h_{2}>\ldots > h_{k}$.
 \end{enumerate}
 \end{assumption}
 Given a Wigner matrix $W$ of dimension $n \times n$ we denote its eigenvalues by $\lambda_{1,n}\ge \ldots \ge \lambda_{n,n}$. It is well known that for a Wigner matrix in Definition \ref{def:wig}, the measure $\frac{1}{n}\sum_{i=1}^{n} \delta_{\lambda_{i,n}}$ converge weakly to the semicircular law in the almost sure sense. The law is given by density in \eqref{eq:semicircular}. When we scale the entries by a factor $2$, the distribution is supported in $[-1,1]$ and its density is given by 
 \begin{equation}\label{eq:semicircularII}
 f(x)=\left\{
 \begin{array}{ll}
 \frac{2}{\pi} \sqrt{1-x^2} & \text{when~ $|x|\le 1$}\\
 0 & \text{otherwise} 
 \end{array}
 \right.
\end{equation}  
 \section{Powers of generating function of Catalan numbers}
 Since Dyck paths play a crucial role in this paper, we discuss the some properties of the Dyck paths and the generating function of the Dyck paths.
 \begin{definition}(Dyck paths)
 A Dyck path of length $2k$ is a path of a simple symmetric random walk which starts from $y=0$ , returns to $y=0$ after $2k$ step and it always stays on or above the $X$ axis.
 \end{definition}
 \begin{definition}(Catalan numbers)
 The count of all Dyck paths of length $2k$ is well known, coined as the $k$ th Catalan number and is given by 
 \begin{equation}\label{def:catalan}
 C_{k}=\left\{
 \begin{array}{ll}
 1 & \text{if $k=0$}\\
 \frac{1}{k+1}\binom{2k}{k} & \text{otherwise.}
 \end{array}
 \right.
 \end{equation}
 \end{definition}
 It is well known that whenever a random variable follows the semicircular distribution given in \eqref{eq:semicircular}, then its $2k$ th moment is $C_{k}$. 
 \begin{definition}
 The generating function of the Catalan numbers and its powers will be  quantities of interest. The generating function of the Catalan numbers is denoted by $C(x)$ and is defined as follows: 
 \begin{equation}\label{def:catgen}
 C(x)= \sum_{k=0}^{\infty} C_{k} x^{k}.
 \end{equation}
 The $m$ th power of $C(x)$ will be of fundamental interest. Fortunately the an explicit formula for the $m$ th power of $C(x)$ is known (see \citet{lang} for a reference). This is given as 
 \begin{equation}\label{def:mthcatalan}
 C^{m}(x)= \sum_{k=0}^{\infty} \frac{m}{m+2k}\binom{2k+m}{k} x^{k}.
 \end{equation}
 \end{definition}
 
\section{A brief overview of Tracy-Widom law and related stuffs}\label{sec:tw}
In this section we give a very brief overview of the point process corresponding to the eigenvalues of a Wigner matrix and the Tracy-Widom law. This part is mostly taken from \citet{sosh}. 

\noindent 
We start with the definition of GOE (Gaussian Orthogonal Ensemble)
\begin{definition}
Suppose we have a Wigner matrix $W$ as defined in Assumption \ref{ass:wig}. Then $W$ is called a GOE (Gaussian Orthogonal Ensemble) if $x_{i,j} \sim_{indep} N(0,\frac{1}{4})$ and $x_{i,i} \sim_{indep} N(0,\frac{1}{2})$.
\end{definition}

\noindent
\begin{definition}(Eigenvalue distribution of Wigner matrices:)
For GOE the eigenvalue distribution is well known. Suppose $\lambda_{1,n},\ldots , \lambda_{n,n}$ be the eigenvalues of a GOE matrix and we assume $P_{n,1}(\lambda_{1},\ldots, \lambda_{n})$ is the eigenvalue distribution. Then 
\begin{equation}\label{eq:GOEeigen}
\begin{split}
dP_{n,1}\left( \lambda_{1,n},\ldots, \lambda_{n,n} \right)= C \prod_{1\le i < j \le n} \left|\lambda_{i,n} - \lambda_{j,n} \right|\exp\left\{ - n \sum_{i=1}^{n} \lambda_{i,n}^2 \right\}\prod_{i=1}^{n}d\lambda_{i,n}
\end{split}
\end{equation}
Although not explicitly known, we shall denote the eigenvalue distribution of a general Wigner matrix by $P_{n,\mathrm{Gen}}(\lambda_{1,n},\ldots, \lambda_{n,n})$
\end{definition}

\noindent 
Studying the eigenvalues near the support is done by studying the $k$ point correlation function of the eigenvalue distribution. This is what we define next.
\begin{definition}($k$ point correlation function)
The $k$ point correlation function of the eigenvalue distribution of the GOE is defined as 
\begin{equation}
\rho_{n,1,k}(\lambda_{1,n},\ldots, \lambda_{k,n})= \frac{n!}{(n-k)!}\int_{\mathbb{R}^{n-k}} \frac{d^{n}P_{n,1}(\lambda_{1,n},\ldots, \lambda_{n,n})}{d\lambda_{1,n}d\lambda_{2,n}\ldots d\lambda_{n,n}} d \lambda_{k+1,n} \ldots d\lambda_{n,n}.
\end{equation} 
Similarly for general Wigner matrices the $k$ point correlation function is defined as 
\begin{equation}
\rho_{n,\mathrm{Gen},k}(\lambda_{1,n},\ldots, \lambda_{k,n})= \frac{n!}{(n-k)!}\int_{\mathbb{R}^{n-k}} \frac{d^{n}P_{n,\mathrm{Gen}}(\lambda_{1,n},\ldots, \lambda_{n,n})}{d\lambda_{1,n}d\lambda_{2,n}\ldots d\lambda_{n,n}} d \lambda_{k+1,n} \ldots d\lambda_{n,n}.
\end{equation}
\end{definition} 
For the precise formulas of $\rho_{n,1,k}(\lambda_{1,n},\ldots, \lambda_{k,n})$ one might look at \citet{mehta2004random} Chapter 5 and 6. $k$ point correlation functions are
particularly useful in calculating the moments of the number of eigenvalues
in an interval $I \subset \mathbb{R}$ . Let $\nu_{n,I,1}(\text{resp.} ~ \nu_{n,I,\mathrm{Gen}})$ be the number of eigenvalues in $I$. Then the mathematical expectation of $\nu_{n,I,1}(\text{resp. } \nu_{n,I,\mathrm{Gen}})$
is given by the formula
\begin{equation}
\E\left[ \nu_{n,I,1}(\text{resp. } \nu_{n,I,\mathrm{Gen}}) \right] = \int_{I} \rho_{n,1,1}(x)(\text{resp. } \rho_{n,1,\mathrm{Gen}}(x)) dx,
\end{equation}
and in general 
\begin{equation}
\E\left[ \nu_{n,I,1}(\nu_{n,I,1}-1)\ldots (\nu_{n,I,1}-k+1)\right]= \int_{I^{k}} \rho_{n,1,k}(x_{1},\ldots, x_{k}) dx_{1} \ldots dx_{k}.
\end{equation}
In the subsequent part of this section we shall only consider the GOE case since the explicit form of the eigenvalue distribution is known.

\noindent 
In order to find out the scaling limit of the eigenvalues at the edge, we shrink the length of the interval $I$ such that in expectation there are only finitely many eigenvalues in the interval. In order to achieve this for a given point $x$ we take the interval $I_{n,x}= \left[x- \frac{c_{1}}{\rho_{n,1,1}(x)}, x+ \frac{c_{2}}{\rho_{n,1,1}(x)}  \right]$ so that the integral 
\[
\int_{I_{n,x}} \rho_{n,1,1}(y)dy=O(1).
\]
This allows us to consider the following rescaling of the eigenvalues for any given $x$
\[
\lambda_{i,n}:= x+ y_{i,n}\frac{1}{\rho_{n,1,1}(x)}. 
\]
we also consider the following rescaled correlation function:  
\[
R_{n,1,k,x}(y_{1},\ldots, y_{k}) = \frac{1}{\rho^{k}_{n,1,1}(x)}\rho_{n,1,k}\left( \lambda_{1,n},\ldots , \lambda_{k,n} \right)
\]
It is easy to see that if $I_{n,x}= \left[x- \frac{c_{1}}{\rho_{n,1,1}(x)}, x+ \frac{c_{2}}{\rho_{n,1,1}(x)}  \right]$ is an interval containing $x$ and is of length $O\left(\frac{1}{\rho_{n,1,1}(x)}\right)$ then
\[
\E\left[ \nu_{n,I_{n,x},1}\left( \nu_{n,I_{n,x},1} -1\right)\ldots \left( \nu_{n,I_{n,x},1}-k +1 \right) \right]= \int_{[c_{1},c_{2}]^{k}} R_{n,1,k,x}(y_{1},\ldots, y_{k}) dy_{1}\ldots dy_{k}.
\] 
To show that $\nu_{n,I_{n,x},1}$ converges to a limit in distribution as $n \to \infty$, one needs
to show that the rescaled $k$-point correlation functions have a limit too. 
This is well known in the Gaussian case (i.e. $R_{n,1,k,x}(y_{1,n},\ldots, y_{k,n}) \to R_{1,k,x}(y_{1},\ldots, y_{k})  $). One might look at \citet{mehta2004random} for a reference. Here we shall be interested in $R_{1,k,1}(y_{1},\ldots, y_{k})$. For the GOE case the function $R_{1,k,1}(y_{1},\ldots, y_{k})$ is defined in the following way: Let 
\begin{equation}\label{eq:kxy}
\begin{split}
K(y,z)= \frac{\mathcal{A}i(y)\mathcal{A}i'(z)- \mathcal{A}i(z) \mathcal{A}i'(y)}{y-z}.
\end{split}
\end{equation}
Here $\mathcal{A}i(x)$ is the Airy function and is given by the solution of the differential equation $f''(y)=yf(y)$ with the asymptotics $f(y)\sim \frac{1}{2\sqrt{\pi} y^{\frac{1}{4}}}\exp\left\{ -\frac{2}{3} y^{\frac{3}{2}} \right\}$ as $y \to \infty$. 
Now $R_{1,k,1}$ can be represented as the square root of determinant of a $2k \times 2k$ matrix consisting of $2\times 2$ blocks $\left(\xi_{1}(y_{i},y_{j})\right)_{1\le i ,j \le k}$. To define $\xi_{1}$ we introduce a few more notations. Let 
\begin{equation}\label{eq:dk}
\begin{split}
DK(y,z) &= - \frac{d}{dz} K(y,z)\\
JK(y,z) &= - \int_{y}^{\infty} K(t,z)dt - \frac{1}{2} \mathrm{sgn}(y-z).
\end{split}
\end{equation}
Then 
\begin{equation}\label{eq:xi}
\xi_{1}(y,z)= \left[
\begin{array}{ll}
K(y,z) + \frac{1}{2}\mathcal{A}i(y)\int_{-\infty}^{t} \mathcal{A}i(t) dt & -\frac{1}{2} \mathcal{A}i(y)\mathcal{A}i(z) + DK(y,z)\\
JK(y,z) + \frac{1}{2} \int_{z}^{y}\mathcal{A}i(u) du + \frac{1}{2} \int_{z}^{\infty} \mathcal{A}i(u) du \int_{-\infty}^{z}\mathcal{A}i(v)dv & K(z,y) + \frac{1}{2}\mathcal{A}i(z)\int_{-\infty}^{y}\mathcal{A}i(t)dt
\end{array}
\right]
\end{equation} 
and 
\begin{equation}\label{eq:tracywidomcorrelation}
R_{1,k,1}(y_{1},\ldots, y_{k})= \sqrt{ \left(\mathrm{det}\left( \xi_{1}(y_{i},y_{j}) \right)_{i,j=1}^{k}  \right)}.
\end{equation}
Finally the distribution of the largest entry of this point process can be determined in the following way: Let $\lambda_{\max}$ be the largest entry of the limiting point process. Then 
\begin{equation}
\begin{split}
&\mathbb{P}\left[ \lambda_{\max} \le t \right]= \mathbb{P}\left[ \# \text{of points of the point process } \in [t,\infty) = 0 \right]\\
& = \sum_{k=0}^{\infty} \frac{(-1)^{k}}{k!} \int_{[t,\infty)^{k}} R_{1,k,1}(y_{1},\ldots, y_{k}) dy_{1}\ldots dy_{k}.
\end{split}
\end{equation}
This concludes our discussion about the GOE Tracy-Widom distribution.
\section{Main results}
In this section we state our main result. These results can also be proved under weaker assumptions by the approach of Erd\H{o}s, Yau and others (see \cite{erdHos2012rigidity} and \cite{erdHos2012spectral}). However as mentioned at the beginning of the paper, the fundamental objective of this paper is to provide a combinatorial method to prove these results and extend the approach in \citet{sosh} for matrices where entries are sub-Gaussian but possibly have non-symmetric distributions.

\noindent 
We now state the main result of this paper.
\begin{theorem}\label{thm:universal}
Suppose $W$ is a Wigner matrix satisfying Assumption \ref{ass:wig} with eigenvalues $\lambda_{(1),n}\ge \ldots \ge\lambda_{(n),n}$. Then following are true:
\begin{enumerate}[(i)]
\item The point process at the edge of the spectrum (i.e. at $x=\pm 1$ according to notations in Section \ref{sec:tw}) converges weakly to the limiting point process at the edge of the spectrum of GOE ensemble.  
\item As a consequence, for any fixed $k$ the joint distribution of $\left(n^{\frac{2}{3}}2\left(\lambda_{(1),n}-1\right), \ldots, n^{\frac{2}{3}}2\left(\lambda_{(k),n} -1 \right) \right)$ converge weakly to the limiting joint distribution of the top $k$ eigenvalues of the GOE. In particular $n^{\frac{2}{3}}2\left(\lambda_{(1),n}-1\right)$ weakly converges to the GOE Tracy Widom distribution.
\end{enumerate} 
\end{theorem}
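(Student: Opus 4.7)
The plan is to prove Theorem \ref{thm:universal} by the method of high trace moments, following the philosophy of \citet{sosh} but with a new combinatorial encoding that accommodates non-symmetric entries. Concretely, I would establish convergence of the rescaled correlation functions $R_{n,1,k,1}(y_1,\dots,y_k)$ by showing that for $s_n=\lfloor t n^{2/3}\rfloor$ with $t$ growing slowly to $\infty$, the mixed trace moments
\[
\E\!\left[\prod_{\ell=1}^m \mathrm{Tr}(W^{s_n^{(\ell)}})\right]
\]
match the corresponding GOE moments up to negligible error. Since these moments determine the Laplace transform of linear statistics of eigenvalues near $+1$, and one can turn this into weak convergence of the edge point process by the standard argument from \citet{sosh} (based on determining the factorial moments of $\nu_{n,I_{n,1},1}$), Part (i) follows. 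Part (ii) is then an immediate consequence of the continuous mapping theorem applied to the top $k$ points of the point process, together with the representation of the GOE Tracy--Widom distribution via \eqref{eq:tracywidomcorrelation}.

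The heart of the argument is the trace expansion
\[
\mathrm{Tr}(W^{s}) \;=\; \sum_{i_0,i_1,\dots,i_{s-1}} W(i_0,i_1)\,W(i_1,i_2)\cdots W(i_{s-1},i_0),
\]
which rewrites $\E[\mathrm{Tr}(W^s)]$ as a sum over closed walks (words) of length $s$ on the vertex set $\{1,\dots,n\}$. I would first group these words by their isomorphism type of the induced multigraph and by the sequence of edge multiplicities, and then define the class of \emph{contributing words}, as described in Section~\ref{sec:word}. The principal tool is the new encoding that maps each such word to a Dyck path of length $2s$ with a controlled set of edges removed; the removed edges record exactly the places where non-symmetry of the entries (and hence odd-moment contributions) can appear. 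Using the sub-Gaussian bound in Assumption \ref{ass:wig}(iv), I would show that words whose induced graph has too many excess edges, or whose edge-multiplicity profile includes too many small odd multiplicities in the wrong positions, contribute $o(1)$ once summed. What remains are words whose encoding is essentially a Dyck path, and whose count can be evaluated via the power series identity \eqref{def:mthcatalan}.

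Having reduced to the Dyck-path-with-removed-edges picture, the next step is to verify that the contributions of the surviving words depend on the entry distribution only through the variance $\frac{1}{4}$. This is the crucial cancellation: odd-multiplicity pieces in the encoding appear in balanced pairs whose joint contribution is symmetric in sign, so the generating series for non-Gaussian cumulants telescope. Consequently, the leading asymptotics of $\E[\mathrm{Tr}(W^{s_n})]$ coincides with the GOE value, and by the same counting the mixed moments coincide as well. Combining the moment matching with the known representation of the GOE edge correlation functions in \eqref{eq:xi}--\eqref{eq:tracywidomcorrelation}, I obtain convergence of $R_{n,1,k,1}$ to $R_{1,k,1}$ on compacts of $[t,\infty)^k$, from which weak convergence of the edge point process, and hence the Tracy--Widom conclusion of (ii), follow.

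The main obstacle will be the bookkeeping inside the encoding when entries are not symmetric. In the symmetric case of \citet{sosh}, all odd-multiplicity edges can simply be discarded; here they survive and must be shown either to appear in matched pairs that cancel after summation over vertex labels, or to be rare enough that the sub-Gaussian tail kills them. Making this precise requires a careful definition of the removed-edge set and a sharp combinatorial lemma bounding the number of Dyck paths of length $2s$ with a prescribed removed-edge pattern, then pushing the overall error below the scale $s^{-3/2}n^{s/2-1}$ that governs the edge fluctuations. This combinatorial sharpness — outperforming the encodings of \citet{furedi1981eigenvalues}, \citet{vu2005spectral} and \citet{peche2007wigner} — is where the novelty of the argument lies, and where most of the technical work of the paper will be concentrated.
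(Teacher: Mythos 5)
Your proposal follows essentially the same route as the paper: the Soshnikov-style trace method with a refined word-to-Dyck-path-with-edges-removed encoding, showing that $\E\Tr[W^{2[tn^{2/3}]}]$ and the centered joint trace moments are $O(1)$ and depend only on the first two moments of the entries, so the limits match GOE; this yields Laplace-transform convergence of the rescaled edge correlation functions, hence weak convergence of the edge point process, and part (ii) follows for the top $k$ points exactly as you say. Two points of caution: the paper works with each \emph{fixed} $t\in(0,\infty)$ (it is the whole family of $t$'s, together with the joint moments of Theorem \ref{thm:jointmom}, that pins down the limit — taking $t\to\infty$ with $n$ as you wrote would not determine the point process), and the odd-multiplicity edges are not disposed of by any sign cancellation or telescoping of cumulants but by the counting argument of Proposition \ref{prop:ge3}, which shows that all words containing an edge traversed at least three times contribute $o(1)$ under the sub-Gaussian moment bound.
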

\section{Strategies to prove the edge universality}
As discussed earlier, our main technique is combinatorial in nature. In this section we briefly give an overview of the strategies to prove the edge universality. 

\noindent
At the beginning, we start with the approach by \citet{sosh}. Here given the matrix $W$ with eigenvalues $\lambda_{1,n}, \ldots , \lambda_{n,n}$, we write 
\[
\lambda_{i,n}= 1+ \frac{\theta_{i,n}}{2n^{\frac{2}{3}}}.
\]
if $\lambda_{i,n}>0$ and 
\[
\lambda_{i,n} = -1 - \frac{\tau_{i,n}}{2n^{\frac{2}{3}}}
\] if  $\lambda_{i,n}\le 0$.
Now we shall consider a very high power of the matrix and compute it's trace. In particular, for any $t \in (0,\infty)$ we consider $[tn^{\frac{2}{3}}]$ and consider $\Tr\left[ W^{2[tn^{\frac{2}{3}}] } \right]$. Since 
\[
\Tr[W^{k}] = \sum_{i=1}^{n} \lambda_{i,n}^{k},
\] 
we have 
\begin{equation}\label{eq:traceexplaplace}
\begin{split}
&\Tr[W^{2[tn^{\frac{2}{3}}]}]= \sum_{\lambda_{i,n}>0} \left(1+ \frac{\theta_{i,n}}{2n^{\frac{2}{3}}}\right)^{2[tn^{\frac{2}{3}}]}+ \sum_{\lambda_{i,n}\le 0} \left( 1+ \frac{\tau_{i,n}}{2n^{\frac{2}{3}}} \right)^{2[tn^{\frac{2}{3}}]}.
\end{split}
\end{equation}
Now among the terms in \eqref{eq:traceexplaplace}, we can ignore the terms when $0\le \lambda_{i,n} \le \left( 1- \frac{1}{2n^{\frac{1}{2}}} \right)$  and $0 \ge \lambda_{i,n} \ge (-1 + \frac{1}{2n^{\frac{1}{2}}}) $. This is due to the fact that both $\left(1- \frac{1}{2n^{\frac{1}{2}}}\right)^{[tn^{\frac{2}{3}}]}= O\left(e^{-cn^{\frac{1}{6}}}\right)$ for some fixed constant $c$. As a consequence, the sum corresponding to these terms goes to $0$.  

\noindent 
Before proceeding further, we introduce two results. These results are the main technical contribution of the paper and their proofs are given in Section \ref{sec:proof}. Similar results can be found in \citet{sosh} but with additional assumption that the distributions of the entries of the matrix are symmetric around $0$. 
\begin{theorem}\label{thm:traceconvergence}
Consider the Wigner matrix $W$ satisfying Assumption \ref{ass:wig}. Then for any fixed $t \in (0,\infty)$ taking $k= \left[tn^{\frac{2}{3}}\right]$, we have the following results 
\begin{enumerate}
\item $\E\Tr\left[ W^{2k} \right]=O(1)$ and $\E\Tr\left[W^{2k+1}\right]=o(1)$.
\item If the limit of $\lim_{n \to \infty} \E\Tr\left[ W^{2k} \right] $ for some $t\in (0,\infty)$ exists, then the limit only depends on the first and second moment of entries. 
\item As the limit exists for Gaussian entries, the limit exists and is universal for any Wigner matrix satisfying Assumption \ref{ass:wig}. 
\end{enumerate}
\end{theorem}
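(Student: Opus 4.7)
The plan is to expand the trace along closed walks and control the sum through the word-equivalence-class machinery, with a clean separation between Dyck-type contributions (which only see the second moment) and residual contributions (which must be shown to be $o(1)$).

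First, I would write
\[
\E\Tr[W^{2k}] = \frac{1}{n^{k}}\sum_{i_{0},i_{1},\ldots,i_{2k-1}=1}^{n}\E\bigl[x_{i_{0}i_{1}}x_{i_{1}i_{2}}\cdots x_{i_{2k-1}i_{0}}\bigr],
\]
and view each term as a closed walk of length $2k$ on $\{1,\ldots,n\}$. Because the off-diagonal entries are i.i.d., the expectation factorizes through the unordered edge multiplicities: each edge $\{a,b\}$ traversed $m_{\{a,b\}}$ times contributes $\E[x^{m_{\{a,b\}}}]$. Any walk with an edge of multiplicity exactly $1$ contributes $0$, so only walks in which every edge is used at least twice survive. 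I would then group walks by word type (vertex-relabeling equivalence); a word with $\ell$ distinct vertices is realized by $(1+o(1))n^{\ell}$ concrete walks that share a common expectation.

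Second, I would isolate the Dyck-type case, in which every edge is traversed exactly twice. The skeleton is then a tree with $\ell=k+1$ vertices and $k$ distinct edges; each word gives a contribution
\[
(1+o(1))\,n^{k+1}\cdot n^{-k}\cdot \bigl(\tfrac{1}{4}\bigr)^{k}=(1+o(1))\cdot \frac{n}{4^{k}},
\]
and the number of such words is the Catalan number $C_{k}$, with the classical bijection to Dyck paths of length $2k$. Since $k=[tn^{2/3}]$ and $C_{k}\sim 4^{k}/(\sqrt{\pi}k^{3/2})$, this gives $nC_{k}/4^{k}=O(1)$, so the Dyck part is bounded; words with $\ell<k+1$ but still every edge used exactly twice are smaller by a factor $n^{-1}$ per missing vertex and are eventually negligible. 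Crucially, the Dyck contribution depends on the entry distribution only through $\E[x^{2}]=\tfrac{1}{4}$. The refined structure of this sum is what will later be matched, via the identity \eqref{def:mthcatalan} for powers of $C(x)$ and the apparatus of Section \ref{sec:tw}, against the GOE Tracy--Widom kernel \eqref{eq:xi}.

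Third, I would handle the residual: words with some edge of multiplicity $\geq 3$. These are exactly the terms that could spoil universality because they involve higher moments of the entries, and they are also the only surviving terms in the odd-length trace $\E\Tr[W^{2k+1}]$ (since $2k+1$ odd forces some edge to have odd multiplicity, which after excluding multiplicity $1$ must be at least $3$). Here I would invoke the refined encoding of Section \ref{sec:proof}, which maps each such contributing word to a Dyck path with a controlled number of edges removed, together with combinatorial decorations recording the excess multiplicities and the non-tree vertex matches. Combined with the sub-Gaussian moment bound $\E[x^{2m}]\le (\mathrm{const}\cdot m)^{m}$ from Assumption \ref{ass:wig}(iv), the counting of encoded objects yields a uniform bound $o(1)$ on their total contribution. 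This simultaneously delivers parts (1), (2), and the odd-trace statement: the full trace equals the Dyck part plus $o(1)$, the Dyck part only depends on the second moment, and the odd trace consists entirely of residual terms.

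The main obstacle is exactly the residual bound. In Soshnikov's symmetric setting, every word with an edge of odd multiplicity contributes $0$ automatically; without symmetry one must show, by genuine counting rather than by parity cancellation, that such words form a subexponentially smaller family once weighted by their moments. Accomplishing this is the new technical input: the encoding of contributing words must be fine enough that the gain from the reduced combinatorial count exceeds the loss from the factorial growth of higher moments in the range $k=[tn^{2/3}]$, where the marginal "cost per excess edge" is of order $k^{3}/n^{2}=O(1)$ and every power of $n$ has to be fought for carefully. Once the residual bound is in place, universality transfers from the Gaussian case, whose limit is classical, to every Wigner ensemble satisfying Assumption \ref{ass:wig}.
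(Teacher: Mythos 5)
Your overall architecture (restrict to walks with every edge repeated, split into ``every edge exactly twice'' versus ``some edge at least thrice'', bound the second family using the sub-Gaussian moment bound, and transfer the limit from the Gaussian case) does follow the paper's skeleton, but the second step contains a genuine error that breaks part (1). You assert that among words in which every edge is traversed exactly twice, only the tree case $\ell=k+1$ matters, because words with $\ell<k+1$ ``are smaller by a factor $n^{-1}$ per missing vertex and are eventually negligible.'' That is the fixed-$k$ Wigner argument, and it fails in the regime $k=[tn^{2/3}]$: each coincidence (a type $j\ge 2$ vertex in the paper's language) costs a factor $n^{-1}$ in vertex choices but gains a positional factor of order $k^{2}$ in the number of equivalence classes, so the net factor per coincidence is of order $k^{2}/n\asymp n^{1/3}\to\infty$. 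The typical contributing word has order $k^{2}/n\asymp n^{1/3}$ self-intersections, and the total over all such words is $O(1)$ only because the sum $\sum_{\Gamma_{2}}\frac{1}{\Gamma_{2}!}\bigl(k^{2}/2n\bigr)^{\Gamma_{2}}$ is cancelled against the falling-factorial correction $\exp\bigl(-k^{2}/2n\bigr)$ coming from $\prod_{l}(n-l+1)$; this delicate cancellation, carried out via the skeleton-word decomposition, the counting in Proposition \ref{lem:countingstrategy} (in particular \eqref{eq:type2cont}--\eqref{eq:boundwordcountdyckII}) and the Riemann-sum/Gamma-function analysis in Proposition \ref{prop:2times}, is precisely the content of the paper's proof and cannot be bypassed. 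Moreover these non-tree, exactly-twice words are not a correction one could afford to drop even in principle: they carry the nontrivial $t$-dependence (the relative corrections of size $k^{3}/n^{2}=t^{3}$) that makes the limit match the GOE Laplace transform, so discarding them would also invalidate the comparison in parts (2)--(3).

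Your third step (the residual with an edge of multiplicity at least $3$, including the whole odd trace) correctly identifies where the non-symmetric distributions bite, but as written it simply defers to ``the refined encoding of Section \ref{sec:proof}'', i.e.\ to Proposition \ref{prop:ge3}, which is the paper's main technical contribution rather than something your outline supplies. That deferral would be acceptable for a proposal if the even-multiplicity-two count were handled correctly; as it stands, the missing analysis of words with cycles is the concrete gap you need to fill, essentially by reproducing the skeleton-word counting of Subsection \ref{subsec:skeletonwordcount} together with Propositions \ref{lem:countingstrategy} and \ref{prop:2times}.
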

\begin{theorem}\label{thm:jointmom}
Consider the Wigner matrix $W$ satisfying Assumption \ref{ass:wig}. Then for any fixed $t_{1}, \ldots, t_{l} \in (0,\infty)^{l} $ taking $k_{i}= \left[ t_{i} n^{\frac{2}{3}} \right]$, we have the following results
\begin{enumerate}
\item 
\begin{equation}\label{eq:jointlim}
\E\left[ \prod_{i=1}^{l} \left[\Tr\left[W^{k_{i}}\right] - \E\left[  \Tr\left[W^{k_{i}}\right]\right] \right] \right]=O(1).
\end{equation}
\item If the limit in \eqref{eq:jointlim} exists for some $t_{1},\ldots , t_{l}$, then the limit only depends on the first and second moment of entries.
\item As the limit exists for Gaussian entries, the limit exists and is universal for any Wigner matrix satisfying Assumption \ref{ass:wig}.
\end{enumerate}
\end{theorem}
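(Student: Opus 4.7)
The plan is to lift the single-word combinatorics of Theorem \ref{thm:traceconvergence} to \emph{sentences} of $l$ closed walks. Writing $W=(x_{i,j}/\sqrt n)$ and expanding
\begin{equation*}
\Tr\bigl[W^{k_i}\bigr] \;=\; \frac{1}{n^{k_i/2}} \sum_{\gamma_i} a_{\gamma_i},
\end{equation*}
where $\gamma_i$ ranges over closed walks of length $k_i$ on $\{1,\ldots,n\}$ and $a_{\gamma_i}$ is the product of the matrix entries along the walk, the centered joint moment in \eqref{eq:jointlim} becomes a sum over tuples $(\gamma_1,\ldots,\gamma_l)$ of $\E\bigl[\prod_i (a_{\gamma_i}-\E a_{\gamma_i})\bigr]$.

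First, to each sentence I would attach the graph on $\{1,\ldots,l\}$ in which $i\sim j$ when the walks $\gamma_i$ and $\gamma_j$ share at least one unordered edge $\{u,v\}$ with $u\ne v$. By independence of the entries of $W$ and the cumulant-style expansion of centered products, the expectation factorizes over connected components of this graph, so all but the sentences whose shared-edge graph is connected cancel out. Next, to each individual walk $\gamma_i$ I would apply the Dyck-path-with-removed-edges encoding from Theorem \ref{thm:traceconvergence}, tracking additionally which of its edges are shared with other walks. Forcing a shared edge between two walks costs one free vertex (an $O(1/n)$ penalty in the counting) but is exactly compensated by the fact that pairing the two occurrences of that edge contributes $\E[x_{i,j}^2]=1/4$ in place of the (vanishing) product of first moments. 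The $l-1$ bonds required to make the sentence connected are thus balanced by the $l-1$ compensations from centering, and the same high-power-of-$W$ counting that gives $\E\Tr[W^{2k}]=O(1)$ in Theorem \ref{thm:traceconvergence} yields part (1).

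For part (2) the argument mirrors Theorem \ref{thm:traceconvergence}: any sentence in which some unordered edge is used three or more times (within a single word or across several) is suppressed by an extra power of $1/n$, with the bound $\E[x_{i,j}^{2k}]\le (\mathrm{const}\cdot k)^k$ from Assumption \ref{ass:wig}(iv) controlling the relevant entry-moments. In the limit only sentences in which every edge is used exactly twice contribute, and these are evaluated using only $\E[x_{i,j}]=0$ and $\E[x_{i,j}^2]=1/4$, so the limit depends only on these two moments. Part (3) is then immediate: GOE satisfies Assumption \ref{ass:wig} and its joint edge statistics (the Tracy--Widom limits recalled in Section \ref{sec:tw}) provide the existence of the limit of \eqref{eq:jointlim} in one instance, and by (2) the same limit holds universally.

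The most delicate step is extending the removed-edge encoding from single words to sentences: one must show that it remains an (essentially) injective map with controllable overcounting when the $l$ word lengths $k_i$ differ and when the shared edges can lie at arbitrary positions within each word's Dyck encoding, and that the $l-1$ connectivity constraints interact cleanly with the already subtle counting of removed edges inside each word. Without this bookkeeping one cannot discard the genuinely new contributions that arise in the non-symmetric regime, where odd products of entries no longer automatically vanish. Making this compatibility precise is where the combinatorial refinement over \citet{sosh} is essential.
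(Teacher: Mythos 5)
Your opening moves coincide with the paper's: expand the centered product over $l$-tuples of closed words, observe that tuples in which some word is edge-disjoint from all others contribute zero (strictly, what is needed is that no block of the induced partition is a singleton, not global connectivity; the paper organizes this through the partition $\varsigma$ and its $d(\varsigma)$ blocks), and then try to reduce to the single-word machinery of Theorem \ref{thm:traceconvergence}. But the heart of part (1) is missing from your proposal. The claimed balance, ``forcing a shared edge costs one free vertex, an $O(1/n)$ penalty, exactly compensated by $\E[x_{i,j}^2]=1/4$ replacing vanishing first moments,'' does not close the count: when two words of length of order $n^{2/3}$ are glued along a shared edge, the naive positional freedom of that edge inside the two words is of order $k_1k_2\sim n^{4/3}$, while identifying its endpoints saves only a single factor of $n$ in the vertex count, leaving an apparent excess of order $n^{1/3}$. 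Killing that excess is precisely the content of the paper's proof, and it is the step you yourself defer to as ``the most delicate'' without supplying a mechanism.

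Concretely, the paper introduces Algorithm \ref{alg:embed}, which merges two closed words sharing an edge into one closed word $\mathfrak{w}_{1}$, and then bounds the size of the preimage of this merge map. The inverse data consist of three marked points on $\mathfrak{w}_{1}$, and the key structural facts are that the first point is forced to sit at (or immediately before) a type $j\ge 2$ instant — so its choice is paid for by a factor of $n$ already lost in the vertex count — while the $O\bigl(l(\mathfrak{w}'_{1,1})\bigr)$ freedom of the second point is cancelled by the constrained Dyck-path factor $\frac{m_{2}+1}{l(\mathfrak{w}'_{1,1})}\binom{l(\mathfrak{w}'_{1,1})}{\frac{l(\mathfrak{w}'_{1,1})+m_{2}+1}{2}} = 2^{l(\mathfrak{w}'_{1,1})}O\bigl(1/l(\mathfrak{w}'_{1,1})\bigr)$; the remaining configurations (the word $\mathfrak{w}'_{1,1}$ contained in a single Dyck path, or the glued edge landing in a Dyck path, or a merged word with an edge traversed at least thrice) are shown separately to vanish. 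Your per-word ``track which edges are shared'' bookkeeping never produces this cancellation, so part (1) is not established; and since parts (2) and (3) rest on applying the same merged-word count (via Propositions \ref{prop:ge3} and \ref{prop:2times} applied to $\mathfrak{w}_{1}$, not to the individual words) to suppress sentences with a thrice-traversed edge, they inherit the same gap even though their outline matches the paper's.
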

Now assuming part 1 and 2 of both Theorems \ref{thm:traceconvergence} and \ref{thm:jointmom} we describe rest of the proof techniques. 

\noindent 
We now divide the sum in \eqref{eq:traceexplaplace} into a few further cases. In particular we consider 
\begin{equation}\label{eq:truncttail}
\begin{split}
s_{1}&= \left[\sum_{\lambda_{i,n} \ge 1+ \frac{2}{n^{\frac{1}{2}}}}  \lambda_{i,n}^{2[tn^{\frac{2}{3}}]} \right]\\
s_{2}&=  \left[\sum_{\lambda_{i,n} \le -1- \frac{2}{n^{\frac{1}{2}}}}  \lambda_{i,n}^{2[tn^{\frac{2}{3}}]} \right].
\end{split}
\end{equation}
We shall at first show that the terms described in \eqref{eq:truncttail} go to $0$ almost surely. We shall show this only for $s_{1}$ and the argument for $s_{2}$ is exactly same. 
\begin{equation}
\begin{split}
\mathbb{P}\left[ s_{1} >0 \right]&\le \mathbb{P}\left[ \lambda_{(1),n} > \left(1+\frac{2}{n^{\frac{1}{2}}}\right) \right]\\
& \le \mathbb{P}\left[ \Tr[W^{2[tn^{\frac{2}{3}}]}] \ge \left( 1+ \frac{2}{n^{\frac{1}{2}}} \right)^{2[tn^{\frac{2}{3}}]}\right]\\
& \le \frac{\E\left[\Tr[W]^{2[tn^{\frac{2}{3}}]}\right] }{\left( 1+ \frac{2}{n^{\frac{1}{2}}} \right)^{2[tn^{\frac{2}{3}}]}}=O\left( e^{-cn^{\frac{1}{6}}} \right).
\end{split}
\end{equation}  
So by Borel-Cantelli theorem $s_{1}= 0$ almost surely. As a consequence, 
\begin{equation}\label{eq:trnc1}
\Tr[W^{2[tn^{\frac{2}{3}}]}]- \sum_{\left(1- \frac{2}{\sqrt{n}}\right)\le \lambda_{i,n}\le \left( 1+ \frac{2}{\sqrt{n}} \right)} \lambda_{i,n}^{2[tn^{\frac{2}{3}}]} - \sum_{\left( -1 - \frac{2}{\sqrt{n}} \right)\le \lambda_{i,n}\le \left( -1 + \frac{2}{\sqrt{n}} \right)}\lambda_{i,n}^{2[tn^{\frac{2}{3}}]} \stackrel{a.e.}{\to} 0.
\end{equation}
Since all the terms of the l.h.s. of \eqref{eq:trnc1} are always greater than $0$ and by Theorem \ref{thm:jointmom} we have $$\limsup_{n}\E\left[ \Tr\left[W^{2[tn^{\frac{2}{3}}]}\right]^2 \right]=O(1)$$, we have the expectation of the l.h.s. of \eqref{eq:trnc1} also goes to $0$ by uniform integrability. Now
\begin{equation}\label{eq:limgau}
\begin{split}
&\sum_{\left(1- \frac{2}{\sqrt{n}}\right)\le \lambda_{i,n}\le \left( 1+ \frac{2}{\sqrt{n}} \right)} \lambda_{i,n}^{2[tn^{\frac{2}{3}}]} + \sum_{\left( -1 - \frac{2}{\sqrt{n}} \right)\le \lambda_{i,n}\le \left( -1 + \frac{2}{\sqrt{n}} \right)}\lambda_{i,n}^{2[tn^{\frac{2}{3}}]}\\
& = \left(\sum_{\theta_{j} \le n^{\frac{1}{6}}} e^{t\theta_{j}} + \sum_{\tau_{j}\le n^{\frac{1}{6}}} e^{t \tau_{j}}\right)\left(1+ O \left(  n^{-\frac{1}{3}}\right)\right)\\
\Rightarrow & \E\left[ \sum_{\left(1- \frac{2}{\sqrt{n}}\right)\le \lambda_{i,n}\le \left( 1+ \frac{2}{\sqrt{n}} \right)} \lambda_{i,n}^{2[tn^{\frac{2}{3}}]} + \sum_{\left( -1 - \frac{2}{\sqrt{n}} \right)\le \lambda_{i,n}\le \left( -1 + \frac{2}{\sqrt{n}} \right)}\lambda_{i,n}^{2[tn^{\frac{2}{3}}]}  \right]\\
& = \left(1+ O\left( n^{-\frac{1}{3}} \right)\right)\left(\int_{-\infty}^{\infty} e^{ty} R_{n,1,1,1}(y)dy + \int_{-\infty}^{\infty} e^{ty} R_{n,1,1,-1}(y)dy \right)
\end{split}
\end{equation} 
Similarly one can prove that 
\begin{equation}
\begin{split}
&\E\left[ \Tr\left[W^{2[tn^{\frac{2}{3}}]+1}\right]  \right]\\
&= \E\left[ \sum_{\left(1- \frac{2}{\sqrt{n}}\right)\le \lambda_{i,n}\le \left( 1+ \frac{2}{\sqrt{n}} \right)} \lambda_{i,n}^{2[tn^{\frac{2}{3}}]+1} - \sum_{\left( -1 - \frac{2}{\sqrt{n}} \right)\le \lambda_{i,n}\le \left( -1 + \frac{2}{\sqrt{n}} \right)}\lambda_{i,n}^{2[tn^{\frac{2}{3}}]+1} \right]\\
&= \left(1+ O\left( n^{-\frac{1}{3}} \right)\right)\left(\int_{-\infty}^{\infty} e^{ty} R_{n,1,1,1}(y)dy - \int_{-\infty}^{\infty} e^{ty}R_{n,1,1,-1}(y)dy  \right)
\end{split}
\end{equation}
Hence 
\begin{equation}
\begin{split}
\frac{1}{2}\E\left[\Tr\left[ W^{2[tn^{\frac{2}{3}}]} \right]+ \Tr \left[ W^{2[tn^{\frac{2}{3}}]+1} \right]\right]= \left( 1+ O\left(n^{-\frac{1}{3}}\right) \right)\left(\int_{-\infty}^{\infty} e^{ty} R_{n,1,1,1}(y)dy\right).
\end{split}
\end{equation}
This implies that for the Gaussian case the l.h.s. of the last expression of \eqref{eq:limgau} exists and equals to 
\begin{equation}
\int_{-\infty}^{\infty} e^{ty} R_{1,1,1}(y)dy.
\end{equation}
This proves the part 3 of Theorem \ref{thm:traceconvergence}.

\noindent
Same can be said for part 3 of Theorem \ref{thm:jointmom}. However here we shall get a polynomial of multi-dimensional Laplace transform. 

\noindent 
Now coming back to general case, by these two results we get the Laplace transforms of the general correlation functions also converge to the same limit as the Gaussian. As convergence of the Laplace transform for all $t>0$ implies the weak convergence of a measure, we have the correlation functions for the general case converge weakly to the limit of the correlation function of the Gaussian case. This completes the proof of Theorem \ref{thm:universal}.  
\section{Combinatorial preliminaries}\label{sec:word}
\subsection{Introductory definitions}
In this subsection we develop some preliminaries about the method of moments and the word sentence approach for random matrices.

 %In this section we develop some introductory notations and definitions essential for the word sentence analysis of method moments of random matrix theory.
To begin with we start with a matrix $W$ of dimension $n \times n$. Its $k$ th moment is given by 
\begin{equation}\label{eq:traceexpression}
\Tr[W^{k}]= \sum_{i_{0},i_{1},\ldots ,i_{k-1}, i_{0}} W_{i_{0},i_{1}}\ldots W_{i_{k-1},i_{0}}.
\end{equation} 
The word sentence method systematically analyzes the tuples $(i_{0}, \ldots, i_{k-1},i_{0})$ for some suitable $k$. To do this we need some notations and definitions.
\label{subsec:word}
 
In this part we give a very brief introduction to words, sentences and their equivalence classes essential for the combinatorial analysis of random matrices. 
The definitions are taken from \citet{AGZ} and \citet{AZ05}.
For more general information, see \cite[Chapter 1]{AGZ} and \cite{AZ05}. 
% The definitions in this section have been taken from  Anderson et al. \cite{AGZ} and Anderson and Zeitouni \cite{AZ05}.
\begin{definition}[$\mathcal{S}$ words]
Given a set $\mathcal{S}$, an $\mathcal{S}$ letter $s$ is simply an element of
$\mathcal{S}$. An $\mathcal{S}$ word $w$ is a finite sequence of letters $s_1
\ldots s_k$, at least one letter long.
An $\mathcal{S}$ word $w$ is \emph{closed} if its first and last letters are the same. In this paper $\mathcal{S}=\{1,\ldots,n\}$ where $n$ is the number of nodes in the graph.
\end{definition}
% \begin{definition}
Two $\mathcal{S}$ words
$w_1,w_2$ are called \emph{equivalent}, denoted $w_1\sim w_2$, if there is a bijection on $\mathcal{S}$ that
maps one into the other.
For any word $w = s_1 \ldots s_k$, we use $l(w) = k$ to denote the \emph{length} of $w$, define
the \emph{weight} $wt(w)$ as the number of distinct elements of the set ${s_1,\ldots , s_k
}$ and the
\emph{support} of $w$, denoted by $\mathrm{supp}(w)$, as the set of letters appearing in $w$. 
With any word $w$ we may associate an undirected graph, with $wt(w)$ vertices and at most $l(w)-1$ edges,
as follows.
\begin{definition}[Graph associated with a word] 
	\label{def:graphword}
Given a word $w = s_1 \ldots s_k$,
we let $G_w = (V_w,E_w)$ be the graph with set of vertices $V_w = \mathrm{supp}(w)$ and (undirected)
edges $E_w = \{\{s_i, s_{i+1}
\}, i = 1,\ldots ,k - 1
\}.$
\end{definition}
The graph $G_w$ is connected since the word $w$ defines a path connecting all the
vertices of $G_w$, which further starts and terminates at the same vertex if the word
is closed. 
% For $e \in E_w$, we use $N^w_
% e$ to denote the number of times this path traverses
% the edge $e$ (in any direction).
We note that equivalent words generate the same
graphs $G_w$ (up to graph isomorphism) and the same passage-counts of the edges. 
% $N^w_
% e$.
Given an equivalence class $\mathbf{w}$, we shall sometimes denote $\#E_{\mathbf{w}}$ and $\#V_{\mathbf{w}}$ to be the common number of edges and vertices for graphs associated with all the words in this equivalence class $\mathbf{w}$. 

\begin{definition}[Weak Wigner words]
	\label{def:weakwigner}
Any word $w$ will be called a \emph{weak Wigner word} if the following conditions are satisfied:
\begin{enumerate}
\item $w$ is closed.
\item $w$ visits every edge in $G_{w}$ at least twice. 
\end{enumerate}
\end{definition}
Suppose now that $w$ is a weak Wigner word. If $wt(w) = (
l(w) + 1)/2$, then we
drop the modifier ``weak" and call $w$ a \emph{Wigner word}. (Every single letter word is
automatically a Wigner word.) 
Except for single letter words, each edge in a Wigner word is traversed exactly twice.
If $wt(w) = (l(w)-1)/2$, then we call $w$ a \emph{critical
weak Wigner word}.

It is a very well known result in random matrix theory that there is a bijection from the set of the Wigner words of length $2k+1$ to the set of Dyck paths of length $2k$. We briefly discuss this map when we construct the  well behaved words.

 \subsection{Mapping of words to Dyck paths}
 The fundamental idea of \citet{sosh} is to map the closed words such that all edges are traversed even number of times to Dyck paths. It is worth noting that given a Dyck path there will be multiple equivalence classes of words. In particular the map is not one to one. The main goal of this section is to understand this map explicitly and extend the ideas to possible cases when the closed words does not have all edges traversed even number of times. To understand the ideas clearly we need the following terminologies.

 \begin{definition}(Well behaved words)
These are the words that can be naturally mapped to a Dyck path in the following way. We start with a Wigner word and from \citet{AGZ} we know that the corresponding graph is a tree. We merge some vertices in the tree to incorporate cycles. As an example one might consider the following word: $w=(1,2,3,5,3,2,4,2,1)$ and we merge the vertices $5$ and $1$ and we call the common letter $1$. So the transformed word is $w'=(1,2,3,1,3,2,4,2,1)$. These words can be mapped to a Dyck path as follows: one start a random walk from $0$ and whenever one traverse an edge odd number of time one goes one step up in the random walk and whenever one traverse an edge even number of times one goes one step down in the random walk. For example the random walk values corresponding to the word $(1,2,3,1,3,2,4,2,1)$ look like $(0,1,2,3,2,1,2,1,0)$.with the additional constraint that the vertex corresponding to value $3$ in the random walk is labeled as $1$ which is the same as the vertex corresponding to value $0$.  
\begin{figure}[H]
        \begin{center}
                \includegraphics[width=0.8\textwidth]{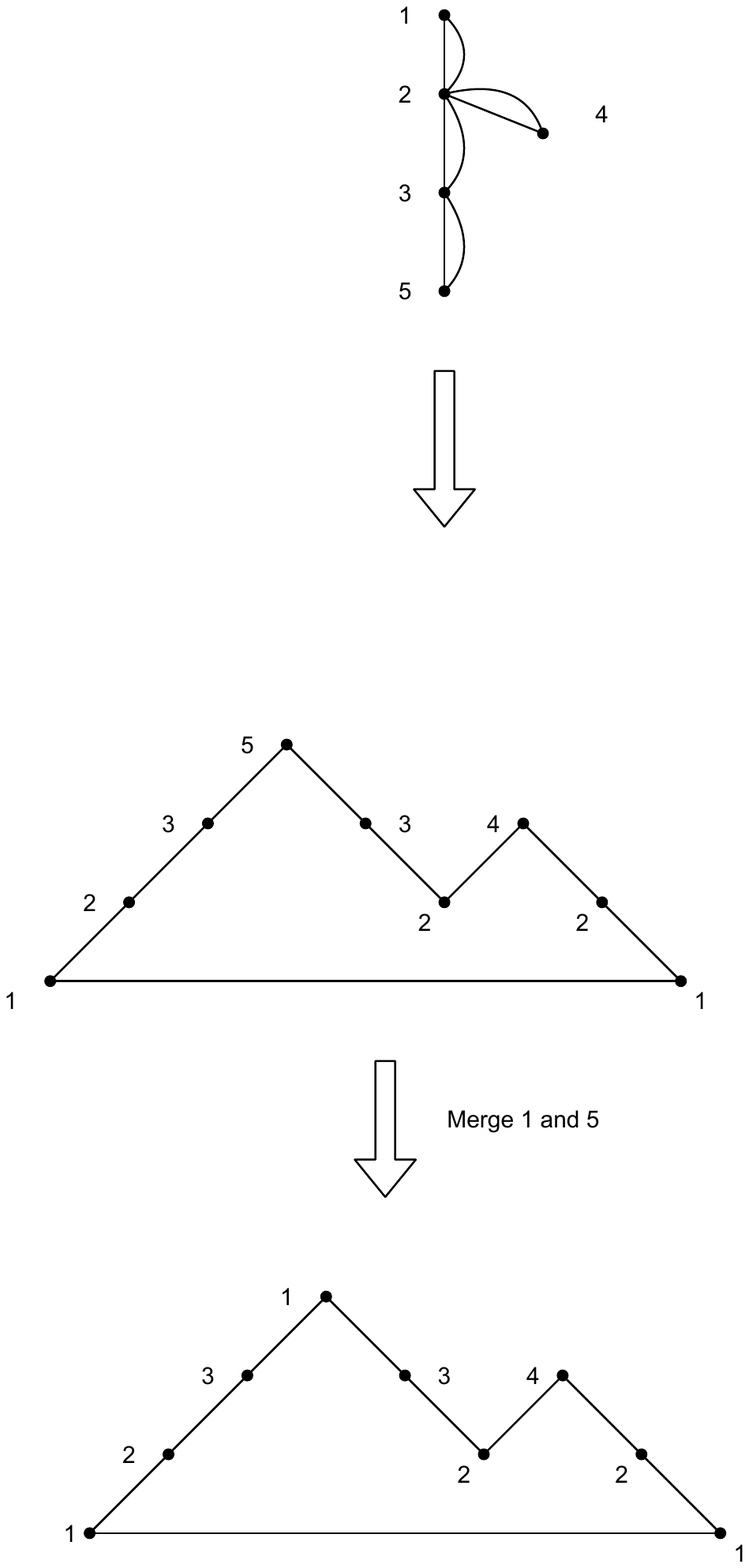}
        ~ %add desired spacing between images, e. g. ~, \quad, \qquad etc.
          %(or a blank line to force the subfigure onto a new line)
      \end{center}   
 \end{figure} 
\end{definition}
Before moving forward, we state an important convention. In several places we need to count the number of times a Dyck path returns to certain level. However from the construction of the Dyck paths it is clear when the Dyck path falls down from a certain level and comes back to the level from below, these points have possibly different labels. So whenever we talk about the Dyck path coming to a specific level we shall always mean that the Dyck path returns to the level before falling down. 

Unfortunately not all words are well behaved. Perhaps the simplest example of such word is $(1,2,3,1,2,3,1)$. Observe that if we want to construct a Dyck path just like the well behaved words we shall encounter problems. To make the idea understandable, we start constructing the Dyck path as follows we start from $0$(vertex $1$) and strictly increase to $3$ (which again comes to vertex $1$) but once we reach that point there is no obvious way to continue the Dyck path. This happens because the next edge is $\{ 1,2 \}$ which although appeared in the Dyck path previously but not the edge appeared immediately before which is $\{ 1,3\}$. 
\begin{figure}[H]
        \begin{center}
                \includegraphics[width=0.5\textwidth]{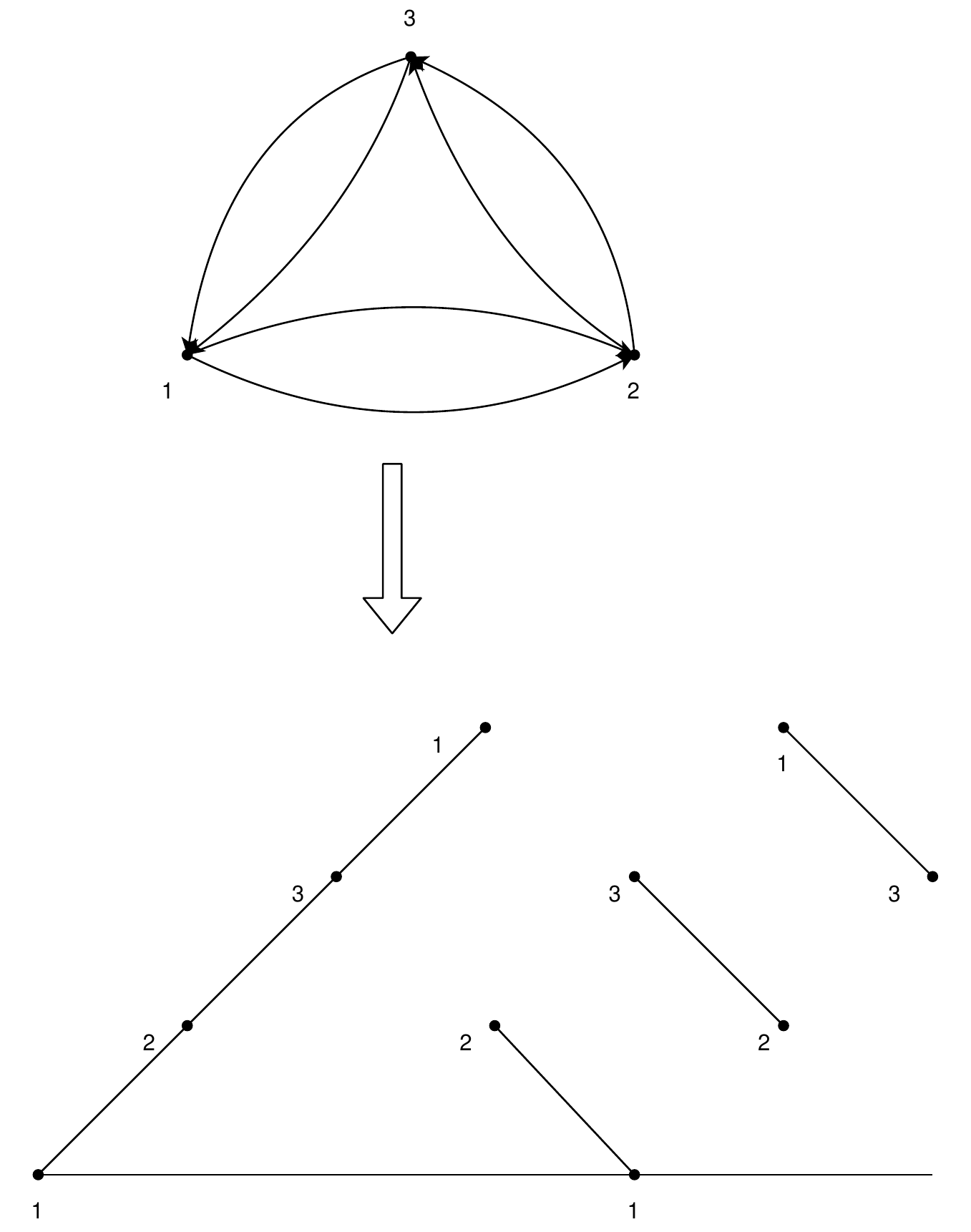}
        ~ %add desired spacing between images, e. g. ~, \quad, \qquad etc.
          %(or a blank line to force the subfigure onto a new line)
      \end{center}   
 \end{figure} 
 Now we peek at the stack interpretation of the Dyck path. From this example we see that we encounter problems continuing the Dyck path when an edge is closed in the word which is not the top most edge in the stack.

 This leads to the definition of non well behaved words. 
\begin{definition}(Non well behaved words)
The non well behaved words are defined as follows. We start a Dyck path following the vertex exploration of the word. However there is at least one instant such the Dyck path can not be continued further if we follow the vertex exploration of the graph. In other words the word exploration is such that there is an instant when the word closes an edge which is not at the top of the stack at that time instant. 
\end{definition} 
To elaborate our idea clearly we go back to the example of the word $(1,2,3,1,2,3,1)$. So we start forming a Dyck path in standard way. In particular the random walk goes strictly in the up ward direction until it hits $3$(with corresponding vertex $1$). Now the Dyck path can't be continued. From this point we start creating the segments. The rest of the word looks like $1,2,3,1$. So at the first step it closes the edge $\{ 1,2 \}$ which corresponds to the upward step from $0$ to $1$ in the Dyck path formed till now. To match this edge in our first segment we draw a downward segment from $1$ to $0$. (Observe that this creates discontinuity in the path. So in the last step of assembling we assemble the segments in such a way that it corresponds to a Dyck path.) In the next two steps we create two more downward segments from $2$ to $1$ (corresponding to edge $\{ 3,2 \}$) and from $3$ to $2$ (corresponding to edge $\{ 3,1 \}$). Now in the final step we 
assemble the segments to get a Dyck path. In particular for the example we are concerned, there is only one Dyck path which can be assembled from the segments. That is a random walk strictly increase from $0$ to $3$ and then strictly decreased from $3$ to $0$. %The following figure gives a pictorial description of what we discussed. 
%\begin{figure}[H]
%        \begin{center}
%                \includegraphics[width=0.5\textwidth]{2.pdf}
%        ~ %add desired spacing between images, e. g. ~, \quad, \qquad etc.
%          %(or a blank line to force the subfigure onto a new line)
%      \end{center}   
% \end{figure} 

\noindent  
 In the next part we elaborate this idea for general non well behaved words. We in particular, give an algorithm (Algorithm \ref{alg:encode}) to encode a general non well behaved word. 
 \begin{algorithm}\label{alg:encode}
 This map is recursive in nature. It is done in the following steps:
\begin{enumerate}[(i)]
\item We start constructing the Dyck path according to the exploration of the word. We continue until we arrive at a situation such that we have to close an edge which is not the immediate one.
\item In this scenario we jump to that level and close the edge. Hence we arrive at a different level. From that level we go back to the previous step. To make the output of the map look like a Dyck path, whenever we close an edge from right to left and create a new edge in the very next step, we create the upward segment in the left of the edge just closed. 
\item The process ends when all the edges are covered. 
\end{enumerate}
 \end{algorithm}

\noindent 
One important comment about Algorithm \ref{alg:encode} is, given any word the output path might not be a Dyck path. This is simply because of the fact that some edges in the Dyck path might not be present in the word. For example one might consider the word $(1,2,3,1)$. Following is the path corresponding to the given word:
\begin{figure}[H]
        \begin{center}
                \includegraphics[width=0.5\textwidth]{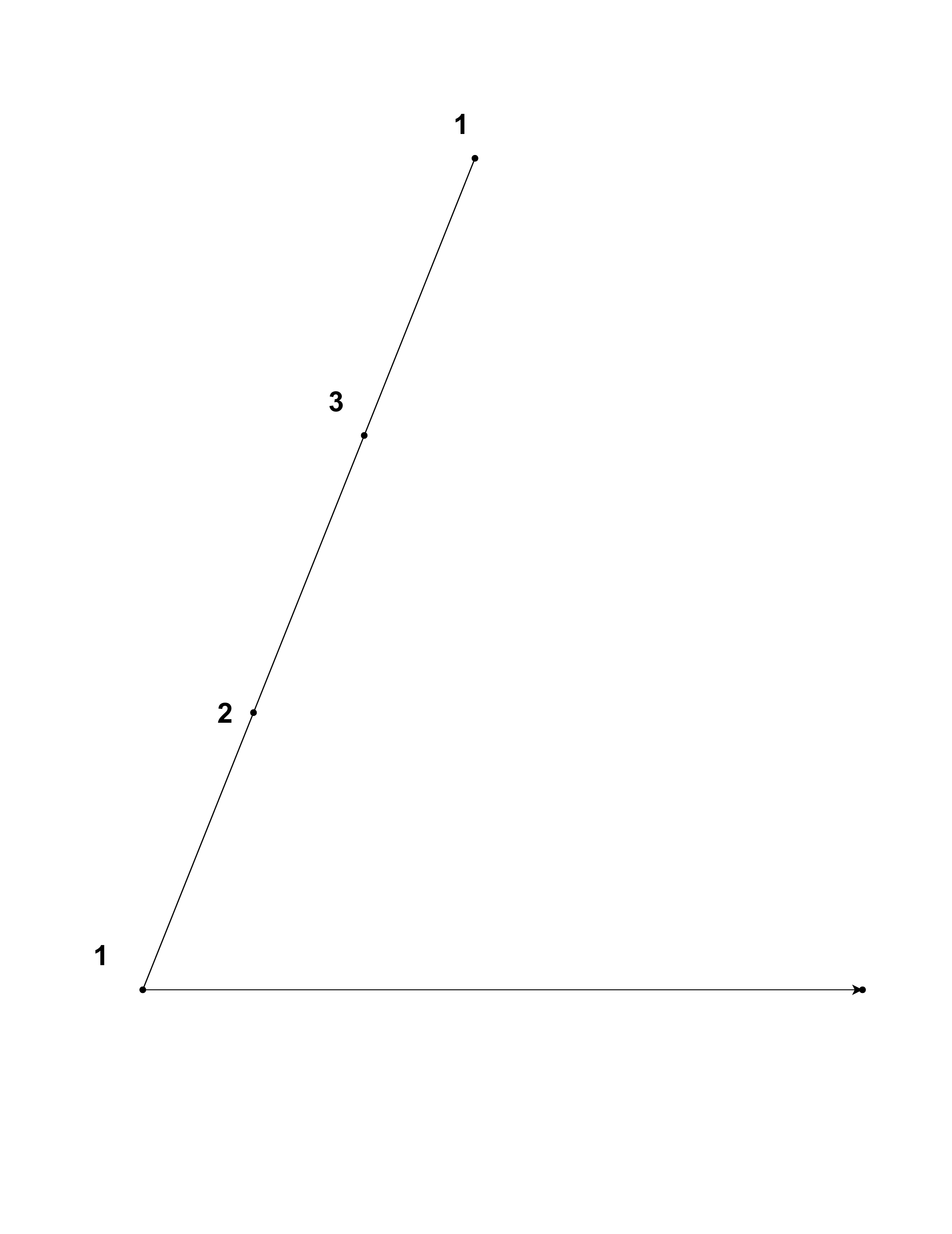}
        ~ %add desired spacing between images, e. g. ~, \quad, \qquad etc.
          %(or a blank line to force the subfigure onto a new line)
      \end{center}   
 \end{figure} 
By Algorithm \ref{alg:encode} we have created a map from the class of words to the paths which are obtained by removing some edges from a Dyck path. However this map is no way one to one. To see this one might consider the following two words $w_{1}:=(1,2,3,1,2,3,1)$ and $w_{2}:=(1,2,3,1,3,2,1)$. It is easy to see that the paths corresponding to the words $w_{1}$ and $w_{2}$ are the same. So given a path the main task is to bound the cardinality of the inverse image of the path in such a manner that the expectation of the quantity in \eqref{eq:traceexpression} remains bounded.
  
% we need to reinterpret the definition of type of an instant and open and closed instants in \citet{sosh}. 
 
\begin{definition}(Type of an instant)\label{def:type}
We start with a word (possibly non well behaved) and apply Algorithm \ref{alg:encode} to the word. We call an instant to be of type $k\ge 1$ if there is an upward step at that instant and it is the $k$ th appearance of the vertex corresponding to that instant as the right endpoint of an upward step. Further if a vertex appears $k$ times as a right endpoint of an upward edge, we call that vertex to be of type $k$.
%We start with a word (possibly non well behaved) and we start forming a Dyck path corresponding to the exploration of the word. We continue Dyck path until we encounter a previously encountered vertex. We call this vertex of type $2$. Now construct a possibly new segment of Dyck path from this vertex of type $2$ continue until another previously encountered vertex we call this vertex another vertex of type $2$ if it's value is not same as the first vertex of type $2$. Otherwise call this vertex of type $3$. We again continue with a possible new segment of Dyck path and define the types of the vertices recursively. Note that the definition of type of a vertex coincides with that of \citet{sosh} for well behaved words.
\end{definition}  
\begin{definition}(Open and closed instants)\label{def:openclosed}
An instant of type $>1$ is called open if the exploration of the word is such that when the instant is encountered, there is at least one unmatched edge which is not the immediate edge, on the vertex corresponding to that instant. Otherwise an instant is called closed.
\end{definition}
To clarify the idea consider the word $(1,2,3,1,3,2,4,3,4,2,1)$. Firstly observe that this word is well behaved. However the idea can be interpreted analogously for non well behaved words. The random walk corresponding to this word looks like this: it starts from $0$ and move in strictly upward direction until it reaches $3$ (corresponding vertex is $1$). Then it goes down $2$ steps to reach $1$ (corresponding vertex is $2$). Now it further goes upward $2$ steps to reach $3$ once again (corresponding vertices are $4$ and $3$). Then it strictly decrease to come to $0$. Observe that there are two instants of type $2$ in this example. The first one is the second appearance of $1$ and the second one is the third appearance of $3$. Among them the second appearance of $1$ is open as there is an unmatched edge (i.e. $\{ 1,2\}$)  apart from the immediate edge (i.e. $\{1,3 \}$). On the other hand the third appearance of $3$ is closed since all the  edges incident to $3$ apart from the immediate edge (i.e. $\{ 4,3 \}$) (i.e. edges $\{ 2,3\}$ and $\{ 3,1 \}$) are matched.
\begin{figure}[H]
        \begin{center}
                \includegraphics[width=0.5\textwidth]{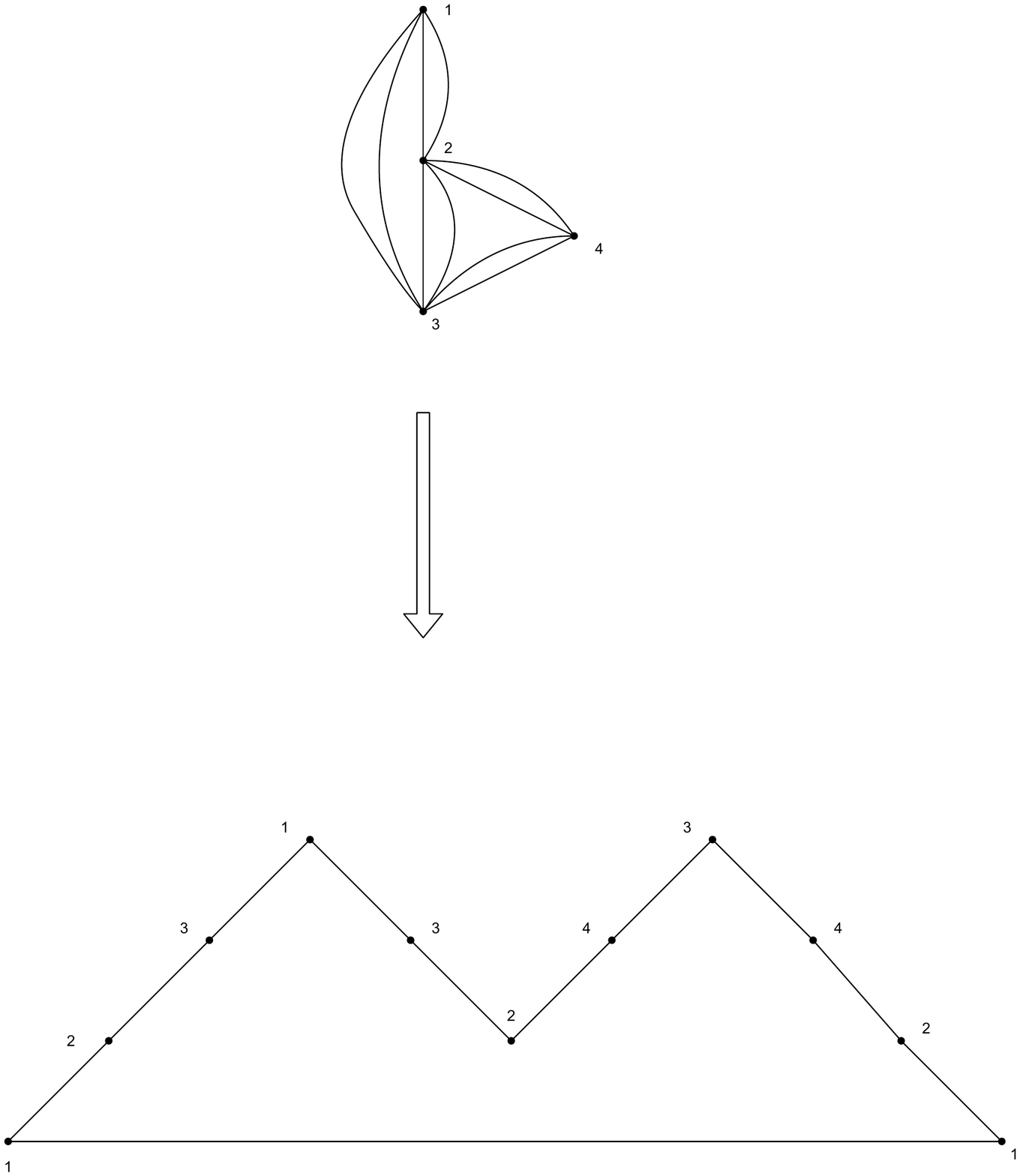}
        ~ %add desired spacing between images, e. g. ~, \quad, \qquad etc.
          %(or a blank line to force the subfigure onto a new line)
      \end{center}   
 \end{figure} 
\noindent
One might note that the definition of open and closed instants are properties of word rather than the path itself. For example consider the word $(1,2,3,1,2,4,3,4,2,3,1)$. It is another word having the same path as $(1,2,3,1,3,2,4,3,4,2,1)$. However here the vertex $1$ and $3$ both appear as open instants.

\subsection{Skeleton words}
One of the main ingredient of the paper is to keep track of the jumps of  non-well behaved words in a systematic way. In order to do that we introduce the concept of the skeleton word.

\noindent
Although the concept of skeleton word came naturally to the author, formally constructing it is some what tricky. We construct this in two steps. In the first step we consider the special case when the word is such that every edge is repeated at most twice. Once this is done we generalize the construction for general words. 

\noindent
\textbf{Construction of skeleton words when every edge is repeated at most twice:}
In this case the skeleton word is formed in the following way: We start constructing the Dyck path according to the word exploration until we encounter first type $j \ge 2$ open instant. We consider all the unmatched edges until this instant. Let there be $r$ many open edges until this instant. We call these edges $$(i_{0}=\alpha_{1},\beta_{1}),(\alpha_{2},\beta_{2}),\ldots, (\alpha_{r},\beta_{r}).$$ We at first prove that for any $i$ $\alpha_{i}=\beta_{i-1}$. Firstly observe that the level of $\alpha_{i}$ can not be below the level of $\beta_{i-1}$, this will contradict the fact that $(\alpha_{i-1},\beta_{i-1})$ is open. Now suppose $\alpha_{i}$ is at a level strictly higher that $\beta_{i-1}$. This also can't happen as the path until the first type $2$ open vertex is continuous hence in order to arrive at the level of $\alpha_{i}$ the path has to cross the intermediate levels and there is at least one open edge incident to each of these levels before reaching the level of $\alpha_{i}$. This contradicts to the fact that $i=(i-1)+1$. Now observe that the path restricted in between any $\beta_{i-1}$ and $\alpha_{i}$ is a Dyck path. Here is a hypothetical illustration of the path until this instant. 
\begin{figure}[H]
        \begin{center}
                \includegraphics[width=0.5\textwidth]{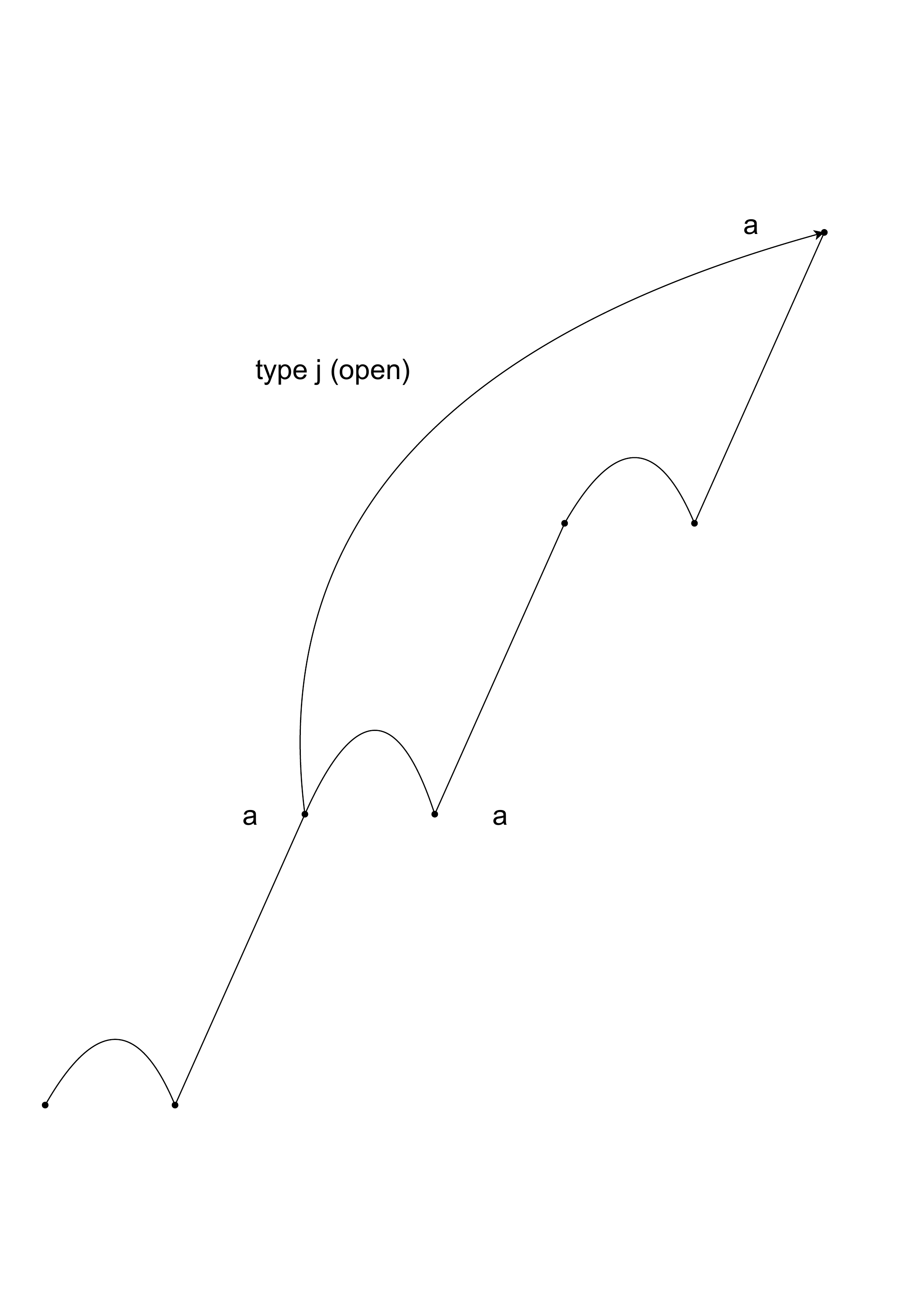}
        ~ %add desired spacing between images, e. g. ~, \quad, \qquad etc.
          %(or a blank line to force the subfigure onto a new line)
      \end{center}   
 \end{figure} 
Here the semicircles represent generic Dyck paths. We include the instants corresponding to the open edges in the skeleton word. Now the word can cover another Dyck path before possibly jumping. After this step the word closes a number of edges in the existing skeleton word before creating a new edge which remains open until the second open instant. We include all the instants corresponding to these edges in the skeleton word and continue the process until the full word is explored. Here is a generic figure to explain the skeleton word of a well behaved word.
\begin{figure}[H]
        \begin{center}
                \includegraphics[width=0.5\textwidth]{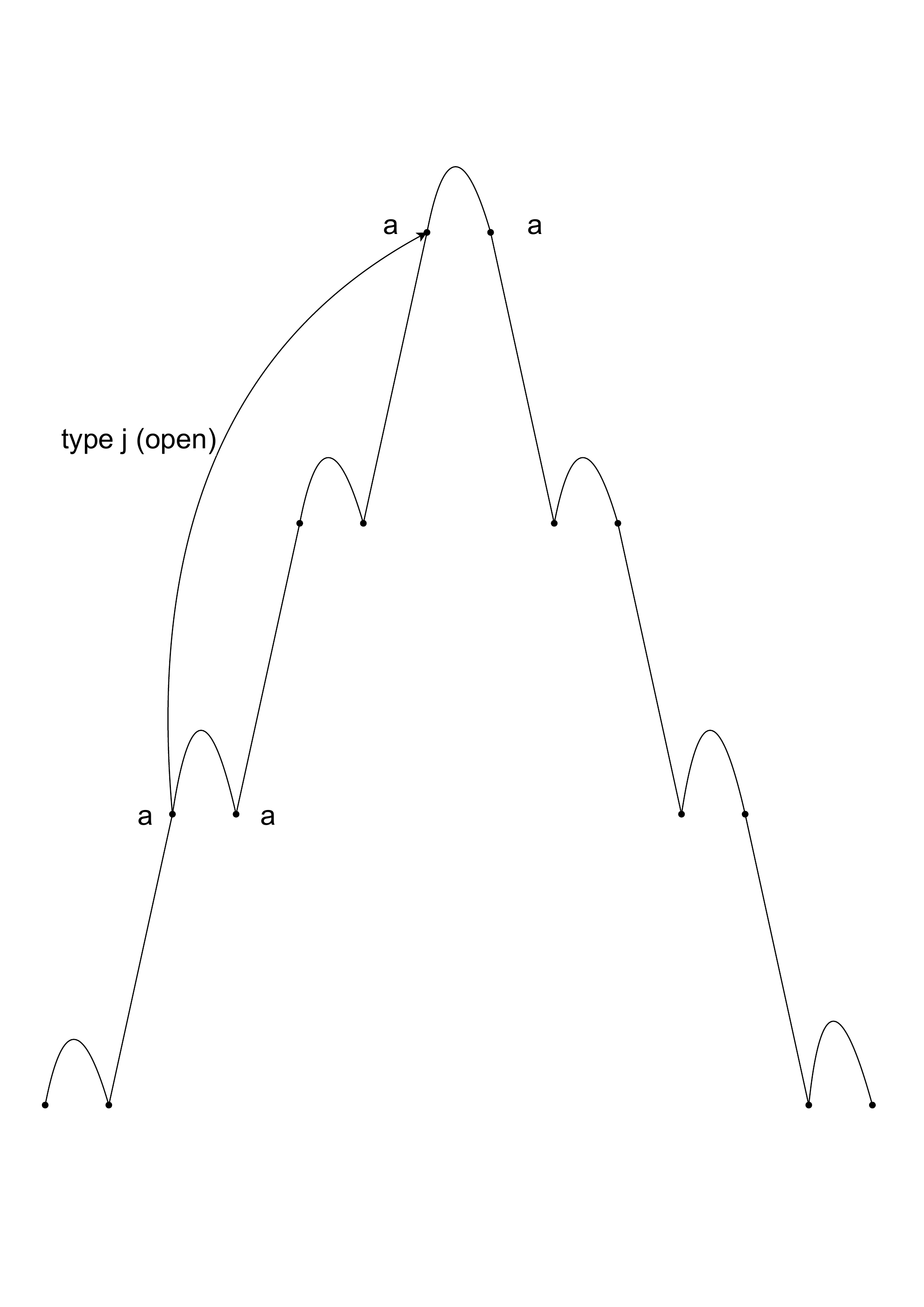}
        ~ %add desired spacing between images, e. g. ~, \quad, \qquad etc.
          %(or a blank line to force the subfigure onto a new line)
      \end{center}   
 \end{figure} 
\textbf{Construction of skeleton words for general words:}
We now give the construction of skeleton words for general words. The crucial fact here we need is, the edge set of the skeleton word and the edge set of the remaining word must be disjoint. First of all, given the word $w$ we follow the procedure just described. We call this pre-skeleton word at step $1$. However in this case there might be edges in the pre-skeleton word at step $1$ which appear in the remaining word. Observe that all the appearances of such edges apart from the skeleton word happens in some Dyck path where the instant corresponding to the starting point of the Dyck path is in the skeleton word. One might look at the following figure for clarification.
\begin{figure}[H]
        \begin{center}
                \includegraphics[width=0.5\textwidth]{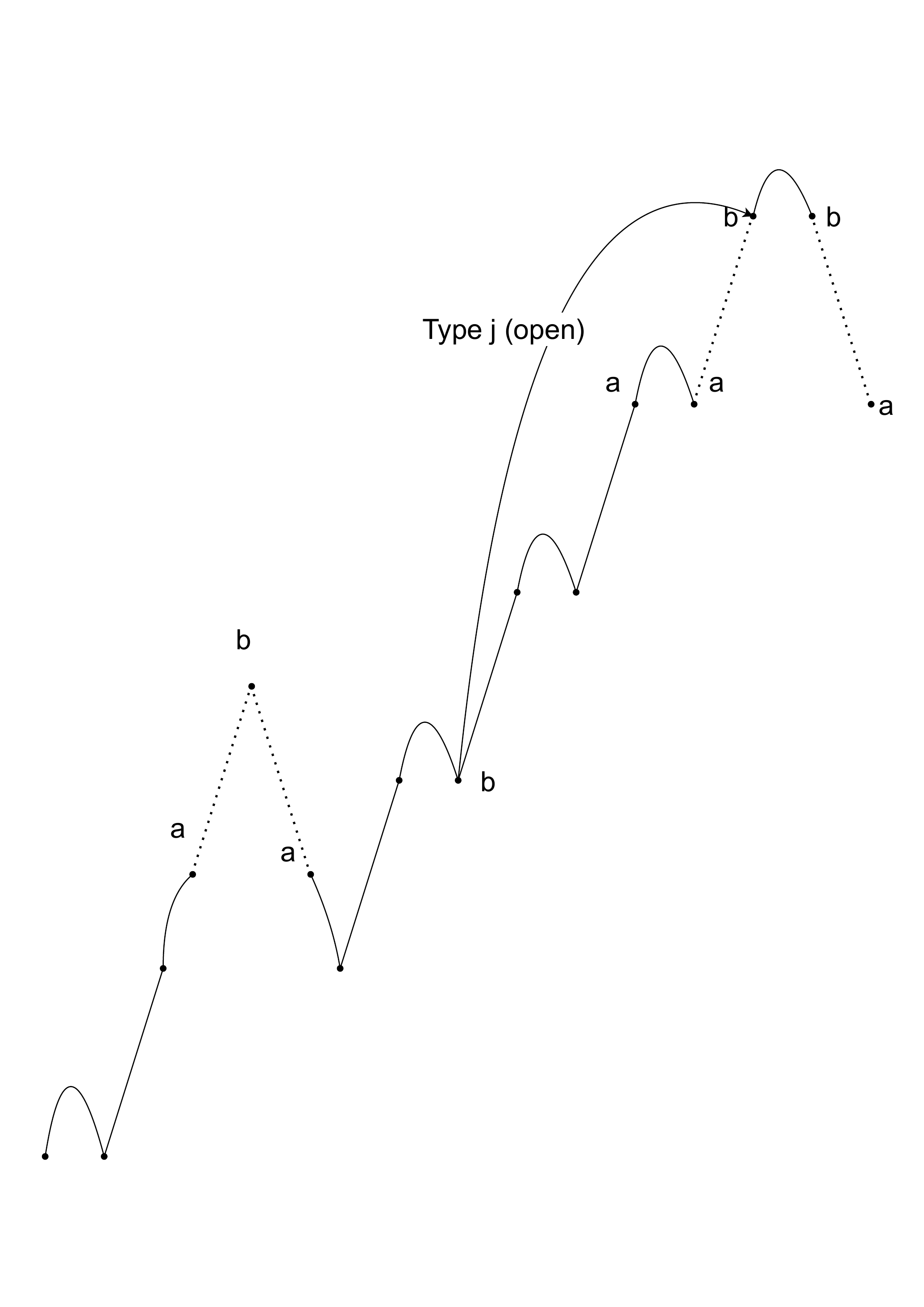}
        ~ %add desired spacing between images, e. g. ~, \quad, \qquad etc.
          %(or a blank line to force the subfigure onto a new line)
      \end{center}   
 \end{figure} 
 Here the dotted lines denote same edge.
 In step $2$ we consider the mentioned edges in the Dyck paths and the instants corresponding to the starting points of the Dyck paths. We look at the exploration restricted to the Dyck path. We include all the instants corresponding to edges open until we reach the edge of consideration, in the skeleton word and the instants which close the edges. As a matter of fact we increase the skeleton word. However we again might include edges  which appear in the remaining word. We now go to step $3$ with the same strategy. We continue until we have the edges in the skeleton word and remaining word disjoint or we exhaust all the edges in the word.
%\begin{definition}
%We start with a word $w$ and use Algorithm \ref{alg:encode} to get a walk $\mathcal{P}(w)$ which is possibly disconnected. Now we collect all the edges such that there exists at least one type $k>1$ open instant at which the edge is open.  We call this collection pre-skeleton edges. The skeleton word $S(w)$ is the minimal word which contains all the traversals of the edges just mentioned.
%\end{definition}
%As an example we again consider the word $(1,2,3,1,3,2,4,3,4,2,1)$. As mentioned earlier, the type $2$ instant for vertex $1$ is open. However the type $2$ instant for vertex $3$ is closed. Hence the skeleton word is $(1,2,3,1,3,2,1)$. The following figure gives an exposition of $S(w)$.
%\begin{figure}[H]
%        \begin{center}
%                \includegraphics[width=1.5\textwidth]{4prnew.pdf}
%        ~ %add desired spacing between images, e. g. ~, \quad, \qquad etc.
%          %(or a blank line to force the subfigure onto a new line)
%      \end{center}   
% \end{figure}  
%  
%One crucial observation is that given a word $w$, $S(w)$ is also a closed word. Although the path corresponding to $S(w)$ in $\mathcal{P}(w)$ might not be continuous. 
%The main scope of the rest of this section is to understand the structure of $S(w)$ for a word $w$. How $w$ is related to $S(w)$.
In order to count all the words we at first fix the skeleton word and enumerate the number of words with this skeleton and finally sum over all the skeleton words.\\

\noindent 
One fact about this representation is, the words corresponding to the intermediate Dyck paths will have empty skeleton words. This introduces an additional constraint. However for the purpose of calculation we shall ignore this constraint as we are only concerned about the upper bound.  
\subsection{Calculation of number of Skeleton words}\label{subsec:skeletonwordcount}
Now we give an upper bound to the number of skeleton words. Observe that firstly there is only one choice until the word hits the first open type $2$ instant. Up to this instant the walk increases monotonically. Now once the word hits the first type $2$ open instant, we have to fix the location of the point where this type $2$ instant appear for the first time. Once that is also fixed, in the next step to continue the word there is at most $3$ choices. It can close the immediately traversed edge or it can close one of the remaining maximum two open edges incident to the instant of the first appearance of the type $2$ instant. Now the word closes some edges for a few steps. Then the walk goes up for some steps until we hit the second type $k>1$ vertex in the skeleton word. We fix the number of upward steps, this fix the location of the second type $k>1$ instant in the skeleton word. Now we have to fix the instant where this vertex appeared as type $k-1\ge 1$ instant for the first time. Now the walk again decreases for some steps. Then it goes up to hit the next type $k>1$  instant and so on. Let $p_{i}$ denote the length of the $i$ th upward chunk before appearance of $i$ th type $k>1$ vertex after the $(i-1)$ th downward chunk, $q_{i}$ denotes the length of the $i$ downward chunk. Let $r_{i}$ denote the positions of the first appearances of the type $k\ge 2$ vertices in the pre-skeleton edges. 
Firstly observe that if a vertex does not appear as a type $k>1$ instant, then there is only one way to continue the skeleton word. It goes one step up or one step down depending upon the position of that instant as an upward or downward chunk respectively. We at first fix the values of $\{ p_{i} \}_{i=1}^{N}, \{ q_{i} \}_{i=1}^{N-1}$ and $r_{i}$'s and find the number of skeleton words with these parameters. Note that instead of $N$ the index of $q_{i}$ runs up to $N-1$. This due to the fact that once all $\{p_{i}\}_{i=1}^{N}, \{ q \}_{i=1}^{N-1}$ and $\{ r_{i} \}$ are fixed then $q_{N}$ has to be fixed. This is due to the fact that the skeleton word is closed. So in the last downward chunk one goes on closing the edges until it hits the starting vertex.

We  at first come to the power of $n$ for this skeleton word. Observe that in the $i$ upward chunk, there are $p_{i+1}$ many vertices. Now every upward chunk ends with a type $k>1$ vertex and also the starting vertex of each upward chunk apart from the first one is also fixed. So this gives us at most $n^{\sum_{i=1}^{N}(p_{i}-1)+1}$ choices.

 Now we look at a vertex (say $i$) which appear as a type $j\ge 2$ instant. In this case there will be $j$ different upward instants where this vertex appears. Let among them there are $\alpha_{i,j}$ instants with no downward chunk. Then at these instants there is only one choice to continue the skeleton word i.e. to go upward. However these instants increase the number of open directions incident to the vertex. %The following figure gives an illustration of this.%(****** insert figure******). 
 Now each one of such instants will increase the number of open directions by $2$. As a consequence when we arrive at a type $l\le j$ instant corresponding to the vertex $i$, the maximum number of choices to continue the skeleton word is bounded by $2\alpha_{i,j}+1$. Now notice that the instants with downward chunks do not increase the number of open direction incident to a vertex. Since they introduce and close an open direction at that instant. Now along with the upward instants there might be downward instants where the walk arrives at the vertex $i$. At these instants there might be multiple choice for the next step. However observe that whenever the walk arrive at the vertex $i$ going downward we either do not have any choice or the choices might be from the other type $k\ge 2$ vertices. Now while leaving vertex $i$ there is multiple number of choices. Whenever the walk arrive and leave at vertex $i$ while going downward, it closes two open directions. As a consequence, there will be at most $\alpha_{i,j}$ of these leavings will each have at most $2\alpha_{i,j}$ choices. 

Let there be $N$ type $k>1$ instants. Among them there are $N_{j}$ instants each of type $j$. Giving $N =\sum_{j\ge 2}(j-1)N_{j}$. If we fix the values of $\{ p_{i} \}_{i=1}^{N}, \{ q_{i} \}_{i=1}^{N-1}$ and $\{r_{i}\}_{i=1}^{\sum_{j}N_{j}}$, the number of skeleton words are bounded by  

\begin{equation}\label{countskeleton}
\begin{split}
 &\prod_{j} \prod_{x_{i,j}=1}^{N_{j}}(2\alpha_{i,j}+1)^{j-1-\alpha_{i,j}}(2\alpha_{i,j})^{\alpha_{i,j}} n^{\sum_{i=1}^{N}p_{i}-N +1}\\
 & \le \prod_{j} (3j)^{(j-1)N_{j}} n^{\sum_{i=1}^{N}p_{i}-N +1}.
 \end{split}
\end{equation}  
choices. 

%Observe that in the bound \eqref{countskeleton} we have possibly counted many cases which do not correspond to a valid word. We call such cases infeasible words. Not all infeasible word can be described. However we describe a special class of infeasible words which will be useful at a later scenario. We here start with a feasible word and look at the skeleton walk. This skeleton walk corresponds to a Dyck path with some edges removed from it. Now we construct an infeasible in the following way. We start creating the word from the starting point and we follow the word until we arrive at the first instant where we have multiple choice while closing an edge. Now we take any of the choice and continue closing edges until we reach the next instant where face multiple choices while closing an edge and continue. Here we might close some edge which did not arrive yet. This will create the infeasiblities.     
%Observe that traversals in these fashion may not give the complete word since we can run out of choices to continue before all edges are traversed. In this case we start with the left most edge which is not traversed and there is no other edge which comes immediately before that edge if we include that edge in any word and continue. %Although these words are not feasible but we have counted them. Now $r_{e,e_{1}}! r_{e,e_{2}}! \ge \frac{1}{2}^{\left[\frac{r_{e}}{2}\right]} \left[\frac{r_{e}}{2}\right]!$. We get the actual feasible word count is less than $2^{N} \prod_{e} \frac{1}{\left[\frac{r_{e}}{2}\right]!}$ times the count we introduced in \eqref{countskeleton}.  

 We shall see later that these $p_{i}$, $q_{i}$ and $r_{i}$'s are typically of order $n^{\frac{1}{3}}$ when the length of the word is of order $n^{\frac{2}{3}}$, all the type $k>1$ open instants are actually type $2$ and $N$ is finite with high probability. In this case \eqref{countskeleton} reduces to 
\begin{equation}\label{countskeletontype2}
 3^{N} n^{\sum_{i=1}^{N}p_{i}- N+1}
\end{equation}
\section{Proof of Theorem \ref{thm:traceconvergence}}\label{sec:proof}
%We at first prove a lemma which is needed for proving the universality. 

We now fix a number of Dyck paths along with their starting points and calculate the number of words corresponding to the Dyck paths. This calculation is quite similar to \citet{peche2009universality}.
\begin{proposition}\label{lem:countingstrategy}
Given any number $m$, we fix $m$ many Dyck paths $\mathcal{P}_{1},\ldots, \mathcal{P}_{m}$ of length $2k_{1},\ldots, 2k_{m}$ respectively. We also fix the initial points of the Dyck paths. Then the number of words obeying the Dyck path exploration is asymptotically of the order $n^{\sum_{i=1}^{m}}k_{i}$.
\end{proposition}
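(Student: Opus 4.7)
The plan is to factor the count across the $m$ paths and then, path by path, invoke the standard Wigner-word / tree correspondence.

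First I would observe that the words counted here are tuples $(w_1,\ldots,w_m)$ in which $w_i$ realizes the Dyck path $\mathcal{P}_i$ from its prescribed initial letter. Because the letters come from the common alphabet $\{1,\ldots,n\}$ with no disjointness requirement across different paths, the total number of realizations factors as $\prod_{i=1}^{m} N_i$, where $N_i$ is the number of closed words of length $2k_i+1$ with the prescribed initial letter whose Dyck exploration coincides with $\mathcal{P}_i$. This reduces the proposition to the single-path count.

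Next I would count $N_i$. Any closed word of length $2k+1$ whose exploration is a Dyck path of length $2k$ has $wt(w)=k+1=(l(w)+1)/2$, so by Definition \ref{def:weakwigner} it is a Wigner word. By the classical result (\citet{AGZ}, Chapter 1) the associated graph $G_w$ is then a tree on $k+1$ vertices, each edge is traversed exactly twice, and the Dyck path is precisely the contour of the depth-first traversal of that tree. Consequently each of the $k$ upward steps discovers a fresh vertex of $G_w$, while each of the $k$ downward steps is completely forced (it returns to the unique parent in the tree). Since the shape of the tree is determined by the Dyck path, the only freedom is the ordered choice of $k_i$ distinct labels in $\{1,\ldots,n\}\setminus\{v_0\}$ for the fresh vertices, giving
\[
N_i \;=\; (n-1)(n-2)\cdots(n-k_i).
\]
Multiplying over $i$ yields $\prod_{i=1}^{m} N_i$, which is at most $n^{\sum_{i=1}^{m}k_i}$ and equals $n^{\sum_{i=1}^{m}k_i}(1+o(1))$ whenever $k_i^2/n\to 0$; in the regime $k_i=O(n^{2/3})$ of interest, the Gaussian-type correction is subexponential in $n$ and does not affect the leading order.

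The one substantive point to verify is the assertion that every upward step introduces a letter not previously used in the same word. This is immediate from the Wigner-word characterization: if an upward step ever re-used a letter, $G_w$ would acquire a cycle, contradicting $\#E_{G_w}\le\#V_{G_w}-1$ that follows from $wt(w)=(l(w)+1)/2$. I do not anticipate any further technical obstacle; the proposition is essentially a packaging, across $m$ independent Dyck paths, of the well-known bijection between Wigner words and labeled plane trees, and the counting collapses once one fixes the Dyck skeleton and the starting vertex.
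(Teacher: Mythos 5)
There is a genuine gap, and it sits exactly where the main content of the proposition lies. You assume that a closed word whose exploration follows a Dyck path must be a Wigner word, i.e.\ that every upward step discovers a fresh letter; your justification of this is circular (you invoke $wt(w)=(l(w)+1)/2$ to rule out cycles, but that identity is precisely what you would need to prove, and it is false in general). In this paper the words ``obeying the Dyck path exploration'' are allowed to have coinciding vertices: the map from words to Dyck paths merges vertices, e.g.\ $w'=(1,2,3,1,3,2,4,2,1)$ follows the Dyck path $(0,1,2,3,2,1,2,1,0)$ yet has only four distinct letters and its graph contains a cycle. These repeated appearances are exactly the type $j\ge 2$ instants of Definition \ref{def:type}, and the paper's proof of Proposition \ref{lem:countingstrategy} is essentially a count of them: it fixes the numbers $\Gamma_{j}$ of type-$j$ vertices in the joint path tuple, bounds the number of ways to place and assign them (the computation \eqref{eq:type2cont} and its analogues for $j\ge 3$), and only then multiplies by the falling factorial for the fresh vertices, as in \eqref{eq:boundwordcountdyck}. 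Your proposal omits this entire class of words.

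The omission is not harmless, because your asymptotic step is also quantitatively wrong in the relevant regime. With $k_i\asymp n^{2/3}$ one has $(n-1)\cdots(n-k_i)=n^{k_i}\exp\left(-\frac{k_i^{2}}{2n}(1+o(1))\right)$ and $k_i^{2}/n\asymp n^{1/3}\to\infty$, so the tree-only count falls short of $n^{\sum_i k_i}$ by a factor of size $e^{-cn^{1/3}}$; it is therefore not of the order $n^{\sum_i k_i}$, and at the precision required in Proposition \ref{prop:2times} (where this count is multiplied by $2^{2k}$-type factors and an $O(1)$ quantity must result) such factors cannot be waved away as ``subexponential.'' In the paper's proof the depletion factor $\exp\left(-\frac{(\sum_i k_i)^{2}}{2n}\right)$ produced by \eqref{eq:expansionlog} is cancelled exactly by the sum over the number $\Gamma_{2}$ of type-$2$ vertices in \eqref{eq:boundwordcountdyckII}; in other words, the words with coinciding vertices that you discarded are the reason the total count is $\asymp n^{\sum_i k_i}$ at all. (Your factorization over the $m$ paths is also looser than the paper's treatment, which classifies repeated vertices jointly across the path tuple, but that is secondary.) To repair the argument you would have to reinstate the words with repeated vertices and carry out essentially the computation given in the paper.
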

\begin{proof}
We at first start with the path $\mathcal{P}_{1}$ and fix its vertices then move to $\mathcal{P}_{2}$ and so on.

\noindent
Firstly observe that there can be common vertices among the Dyck paths $\mathcal{P}_{1},\ldots, \mathcal{P}_{m}$. We also have to keep track of the instants of type $k>1$. In our calculation we shall keep track of the vertices appeared in all the previous Dyck paths. In particular, while calculating the words corresponding to $\mathcal{P}_{l}$ we shall keep track of the vertices appeared in $\mathcal{P}_{1},\ldots, \mathcal{P}_{l-1}$ and define the type $k>1$ vertices whenever a vertex appeared as the right end point of an upward edge for the $k$ th time in the joint path tuple $\mathcal{P}_{1},\ldots, \mathcal{P}_{m}$. 

\noindent 
Let $\{\Gamma_{j}\}_{j=1}^{\infty}$ be the number of vertices of type $j$ in the joint path tuple $\mathcal{P}_{1},\ldots, \mathcal{P}_{m}$. We at first start with the instants of type $2$. There are a total of $\Gamma_{2}$ of them. Firstly we fix their locations. As there are a total of $\sum_{i=1}^{m} k_{i}$ many up ward instants, the type $2$ instants will be among them. Hence the locations are given by $\Gamma_{2}$ positions $j_{1}<j_{2}<\ldots< j_{\Gamma_{2}}$ where $j_{i}$'s are within $\{1,2,\ldots, \sum_{i=1}^{m} k_{i}\}$. Given the locations $j_{1}<\ldots<j_{\Gamma_{2}}$ we are to choose their values. For the location $j_{1}$ there are at most $(j_{1}-1)$ choices for the value of the first type $2$ instant. Similarly for the locations $j_{i}$ there are at most $(j_{i}-i)$ choices for the value of the $i$ th type $2$ instant. Hence the total number of choices for the type $2$ instants are given by 
\begin{equation}\label{eq:type2cont}
\begin{split}
&\sum_{1\le j_{1}< \ldots < j_{\Gamma_{2}}\le \sum_{i=1}^{m} k_{i}} \prod_{i=1}^{\Gamma_{2}} (j_{i}-i) \\
& \le \sum_{1\le j_{1}< \ldots < j_{\Gamma_{2}}\le \sum_{i=1}^{m} k_{i}} \prod_{i=1}^{\Gamma_{2}} j_{i}
 = \frac{1}{\Gamma_{2}!}\sum_{1\le j_{1} \neq j_{2}\neq \ldots \neq j_{\Gamma_{2}}\le \sum_{i=1}^{m} k_{i}} \prod_{i=1}^{\Gamma_{2}} j_{i}\\
 &\le \frac{1}{\Gamma_{2}!} \sum_{i \le j_{1},\ldots j_{\Gamma_{2}}\le \sum_{i=1}^{m}k_{i}} \prod_{i=1}^{\Gamma_{2}} j_{i}= \frac{1}{\Gamma_{2}!} \left(\frac{\left(\sum_{i=1}^{m}k_{i}\right)\left( \sum_{i=1}^{m} k_{i} -1\right)}{2}\right)^{\Gamma_{2}}\le \frac{1}{\Gamma_{2}!}\left( \frac{\left(\sum_{i=1}^{m}k_{i}\right)^{2}}{2} \right)^{\Gamma_{2}}.
\end{split}
\end{equation} 
A similar calculation proves that the number of choices for the type $j$ instant is bounded by $\frac{1}{\Gamma_{j}!(j-1)!^{\Gamma_{j}}}\left( \sum_{i=1}^{m} k_{i} \right)^{j\Gamma_{j}}$. As a consequence, we get the total number of words corresponding to paths $\mathcal{P}_{1},\ldots, \mathcal{P}_{m}$ are bounded by 
\begin{equation}\label{eq:boundwordcountdyck}
\begin{split}
\prod_{l=1}^{\sum_{i}k_{i}- \sum_{j\ge 2}(j-1)\Gamma_{j}} (n-l+1)\sum_{\Gamma_{2}}\frac{1}{\Gamma_{2}!}\left( \frac{\left(\sum_{i=1}^{m}k_{i}\right)^{2}}{2} \right)^{\Gamma_{2}}\sum_{\Gamma_{3},\Gamma_{4},\ldots} \prod_{j\ge 3}\frac{1}{(j-1)!^{\Gamma_{j}}\Gamma_{j}!}\left( \sum_{i=1}^{m} k_{i} \right)^{j\Gamma_{j}}
\end{split}
\end{equation} 
Here we have used the fact that the initial choices of the paths are fixed. This corresponds to the power $\sum_{i=1}^{m}k_{i}$ instead of $\sum_{i=1}^{m} k_{i}+m$. Now we analyse the term $\prod_{l=1}^{\sum_{i}k_{i}- \sum_{j\ge 2}(j-1)\Gamma_{j}} (n-l+1)$ in some details. Observe that 
\begin{equation}
\begin{split}
&\prod_{l=1}^{\sum_{i}k_{i}- \sum_{j\ge 2}(j-1)\Gamma_{j}} (n-l+1)\\
&= n^{\sum_{i}k_{i}- \sum_{j\ge 2}(j-1)\Gamma_{j}}\prod_{l=1}^{\sum_{i}k_{i}- \sum_{j\ge 2}(j-1)\Gamma_{j}} \left(\frac{n-l+1}{n}\right)\\
&= n^{\sum_{i}k_{i}- \sum_{j\ge 2}(j-1)\Gamma_{j}}\prod_{l=1}^{\sum_{i}k_{i}-\sum_{j\ge 2}(j-1)\Gamma_{j}}\left( 1- \frac{l-1}{n} \right).
\end{split}
\end{equation}  
So the whole thing boils down to analyse $\sum_{l=1}^{\sum_{i}k_{i}-\sum_{j\ge 2}(j-1)\Gamma_{j}}\log\left(1- \frac{l-1}{n}\right)$. Now using the fact that $\frac{x}{(1-x)}\le \log(1+x) \le x$
\begin{equation}\label{eq:expansionlog}
\begin{split}
&\sum_{l=1}^{\sum_{i}k_{i}-\sum_{j \ge 2}(j-1)\Gamma_{j}}-\frac{\frac{l-1}{n}}{1+ \frac{l-1}{n}} \le \sum_{l=1}^{\sum_{i}k_{i}-\sum_{j \ge 2}(j-1)\Gamma_{j}} \log\left( 1- \frac{l-1}{n} \right)\le \sum_{l=1}^{\sum_{i}k_{i}-\sum_{j \ge 2}(j-1)\Gamma_{j}} -\frac{l-1}{n}\\
\Rightarrow & \sum_{l=1}^{\sum_{i}k_{i}-\sum_{j \ge 2}(j-1)\Gamma_{j}} \log\left( 1- \frac{l-1}{n} \right) = - \left( 1+O\left(\frac{\sum_{i}k_{i}}{n}\right) \right)\left( \sum_{l=1}^{\sum_{i}k_{i}-\sum_{j \ge 2}(j-1)\Gamma_{j}}\frac{l-1}{n} \right)\\
& \le -\frac{1}{2n}\left(1+O\left( \frac{\sum_{i}k_{i}}{n} \right)  \right)\left( \sum_{i}k_{i}-\sum_{j\ge 2} (j-1)\Gamma_{j} \right)^2\\
&\le -\frac{1}{2n}\left(1+O\left( \frac{\sum_{i}k_{i}}{n} \right)  \right) \left( \left(\sum_{i}k_{i}\right)^2 -2\left( \sum_{i}k_{i} \right)\left(\sum_{j\ge 2}(j-1)\Gamma_{j}\right) \right)\\
&= - \frac{\left(\sum_{i}k_{i}\right)^2}{2n}+O(1) + \left( 1+O\left( \frac{\sum_{i}k_{i}}{n} \right)\right)\frac{\left( \sum_{i}k_{i} \right)\left(\sum_{j\ge 2}(j-1)\Gamma_{j}\right) }{n}\\
&\le - \frac{\left(\sum_{i}k_{i}\right)^2}{2n}+O(1) + 2\frac{\left( \sum_{i}k_{i} \right)\left(\sum_{j\ge 2}(j-1)\Gamma_{j}\right) }{n}
\end{split}
\end{equation}
Here in the last line of \eqref{eq:expansionlog} we have used the fact that $\sum_{i=1}^{m}k_{i}=O(n^{\frac{2}{3}})$. Now exponentiating \eqref{eq:expansionlog} and putting it in \eqref{eq:boundwordcountdyck} we have the count of the words are bounded by :
\begin{equation}\label{eq:boundwordcountdyckII}
\begin{split}
&O(1)n^{\sum_{i}k_{i}- \sum_{j\ge 2}(j-1)\Gamma_{j}}\exp\left(  - \frac{\left(\sum_{i}k_{i}\right)^2}{2n} \right) \exp\left( 2\frac{\left( \sum_{i}k_{i} \right)\left(\sum_{j\ge 2}(j-1)\Gamma_{j}\right) }{n} \right)\\
&~~~~~~~~~~~~~~~~~\sum_{\Gamma_{2}}\frac{1}{\Gamma_{2}!}\left( \frac{\left(\sum_{i=1}^{m}k_{i}\right)^{2}}{2} \right)^{\Gamma_{2}}\sum_{\Gamma_{3},\Gamma_{4},\ldots} \prod_{j\ge 3}\frac{1}{(j-1)!^{\Gamma_{j}}\Gamma_{j}!}\left( \sum_{i=1}^{m} k_{i} \right)^{j\Gamma_{j}}\\
& \asymp n^{\sum_{i}k_{i}} \exp\left(  - \frac{\left(\sum_{i}k_{i}\right)^2}{2n} \right) \sum_{\Gamma_{3},\Gamma_{4},\ldots} \prod_{j\ge 3} \frac{1}{(j-1)!^{\Gamma_{j}}\Gamma_{j}!}\left(\frac{\left(\sum_{i}k_{i}\right)^{j}\exp\left((j-1) \frac{2\sum_{i}k_{i}}{n} \right)}{n^{j-1}}\right)^{\Gamma_{j}}\\
&~~~~~~~~~~~~~~~~~~~~ \sum_{\Gamma_{2}} \frac{1}{\Gamma_{2}!}\left( \frac{\left(\sum_{i}k_{i}\right)^2\exp\left( \frac{2\sum_i{k_{i}}}{n} \right)}{2n} \right)^{\Gamma_{2}}\\
&\asymp  n^{\sum_{i}k_{i}} \exp\left(  - \frac{\left(\sum_{i}k_{i}\right)^2}{2n} \right)\exp \left(  \frac{\left(\sum_{i}k_{i}\right)^{2}\exp\left( \frac{2\sum_i{k_{i}}}{n} \right)}{2n}\right)\exp\left( \sum_{j\ge 3} \frac{\left(\sum_{i}k_{i}\right)^{j}\exp\left((j-1) \frac{2\sum_{i}k_{i}}{n} \right)}{(j-1)!n^{j-1}} \right)
\end{split}
\end{equation}
It can be showed with some elementary calculation that the term 
\begin{equation}
\exp\left(  - \frac{\left(\sum_{i}k_{i}\right)^2}{2n} \right)\exp \left(  \frac{\left(\sum_{i}k_{i}\right)^{2}\exp\left( \frac{2\sum_i{k_{i}}}{n} \right)}{2n}\right)\exp\left( \sum_{j\ge 3} \frac{\left(\sum_{i}k_{i}\right)^{j}\exp\left((j-1) \frac{2\sum_{i}k_{i}}{n} \right)}{(j-1)!n^{j-1}} \right) \asymp 1
\end{equation}
whenever $\sum_{i}k_{i}\asymp n^{\frac{2}{3}}$. As a consequence, we get the required bound on the word count. This completes the proof.
\end{proof}
We now state a proposition which tells that among all the words, the words with every edge traversed exactly twice give a bounded contribution to the trace.
\begin{proposition}\label{prop:2times}
Suppose we consider all the words of length $2k+1$ where every edge traversed exactly twice. We call this class of words $\mathcal{W}_{2k}$. Then the following is true whenever $k=[tn^{\frac{2}{3}}]$ for some $t\in (0,\infty)$:
\begin{equation}\label{eq:2timescont}
\frac{1}{n^{k}} \sum_{w \in \mathcal{W}_{2k}} \E[X_{w}]= O(1).
\end{equation} 
\end{proposition}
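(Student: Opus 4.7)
The plan is to reduce Proposition~\ref{prop:2times} to a direct application of Proposition~\ref{lem:countingstrategy} for a single Dyck path, followed by Stirling's estimate for Catalan numbers. The whole point is that the class $\mathcal{W}_{2k}$ is the ``cleanest'' case of Algorithm~\ref{alg:encode}, where the encoded path actually is a Dyck path, so the counting collapses to the classical Wigner-word count.

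First I would argue that every $w \in \mathcal{W}_{2k}$ is mapped by Algorithm~\ref{alg:encode} to a genuine Dyck path of length $2k$ with no removed edges. Since every edge is traversed exactly twice, the number of ``first encounters'' equals the number of ``second encounters'', producing exactly $k$ up-steps and $k$ down-steps; the algorithm's stack of open edges is always nonnegative and empties at the end. Hence $\mathcal{W}_{2k}$ is partitioned by the $C_k$ Dyck paths of length $2k$. (This holds even for non-well-behaved examples such as $(1,2,3,1,2,3,1)$, which still encodes to the Dyck path $(0,1,2,3,2,1,0)$.)

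Second, I would apply Proposition~\ref{lem:countingstrategy} with $m=1$ to each Dyck path $\mathcal{P}$ of length $2k$. That proposition gives $\bO(n^{k})$ words with encoding $\mathcal{P}$ once the starting label is fixed, so allowing $i_0 \in \{1,\ldots,n\}$ gives $\bO(n^{k+1})$ words per Dyck path, and summing over the $C_k$ Dyck paths yields $|\mathcal{W}_{2k}| \le \bO(C_k\, n^{k+1})$. Since the entries are i.i.d.\ with $\E[x_{ij}^{2}]=1/4$ and each edge in $w$ is used exactly twice, independence gives $\E[X_w]=(1/4)^k$ for every $w \in \mathcal{W}_{2k}$, whence
\[
\frac{1}{n^k}\sum_{w\in\mathcal{W}_{2k}} \E[X_w] \;\le\; \frac{(1/4)^k\, C_k\, n^{k+1}}{n^k} \;=\; \frac{n\,C_k}{4^k}.
\]
By Stirling, $C_k \sim 4^k/(k^{3/2}\sqrt{\pi})$, so the right-hand side is $\asymp n/k^{3/2}$; substituting $k=[tn^{2/3}]$ gives $\asymp t^{-3/2}/\sqrt{\pi} = \bO(1)$, as required.

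The main obstacle is ensuring that the constant implicit in the $\bO(n^k)$ bound of Proposition~\ref{lem:countingstrategy} is uniform over all $C_k$ Dyck paths at length $2k = 2[tn^{2/3}]$. This is already handled by \eqref{eq:boundwordcountdyckII}: the product of exponentials there reduces to a multiplicative constant precisely in the regime $\sum_i k_i \asymp n^{2/3}$, because the leading divergences $-(\sum k_i)^2/(2n)$ and $+(\sum k_i)^2 e^{2\sum k_i/n}/(2n)$ cancel to order $(\sum k_i)^3/n^2 = \Theta(1)$. A secondary sanity check is that the counting overestimate from treating every Dyck path as if all its vertices were tree-like is harmless: for $w \in \mathcal{W}_{2k}$ with genuine cycles the number of labelings only drops (since $V \le k$), so it is absorbed without loss into the $C_k n^{k+1}$ bound above.
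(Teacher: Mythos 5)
There is a genuine gap in the reduction. Proposition \ref{lem:countingstrategy} does not bound the number of words whose image under Algorithm \ref{alg:encode} is a given Dyck path $\mathcal{P}$; it bounds the number of words \emph{obeying the Dyck path exploration}, i.e.\ words in which every down-step closes the edge currently at the top of the stack (well behaved words, possibly with vertex coincidences). Your first step claims that $\mathcal{W}_{2k}$ is partitioned by the $C_k$ Dyck paths and that each cell of the partition has $\bO(n^{k})$ elements by Proposition \ref{lem:countingstrategy} with $m=1$, but the cells also contain the non well behaved words, which close edges out of stack order and are simply not in the count of that proposition. Your own example makes the point: $(1,2,3,1,2,3,1)$ lies in $\mathcal{W}_{6}$ and is encoded to the path $(0,1,2,3,2,1,0)$, yet it does not obey that path's exploration (after the fourth letter the stack-top edge is $\{3,1\}$, while the word closes $\{1,2\}$), so it is invisible to Proposition \ref{lem:countingstrategy}. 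Controlling exactly these words is the purpose of the skeleton-word machinery, and it occupies essentially all of the paper's proof of Proposition \ref{prop:2times}: one fixes the skeleton (the record of the jumps), counts skeleton words via the bound $3^{N}n^{\sum_i p_i-N+1}$ of Subsection \ref{subsec:skeletonwordcount}, applies Proposition \ref{lem:countingstrategy} only to the Dyck paths hanging off the skeleton, and then sums over the parameters $N,p_i,q_i,r_i$ using Riemann-sum and Gamma-function estimates.

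Moreover, these omitted words cannot be absorbed "without loss" as your closing sanity check suggests. Each jump (type $2$ open instant) costs one factor of $n$ in vertex labelings but is compensated by positional choices of order $m^{3}$ with $m\asymp\sqrt{k}\asymp n^{1/3}$, so $m^{3}/n\asymp 1$: the non well behaved words with $N$ jumps contribute at the \emph{same} order $n^{k}4^{k}$ as the well behaved ones, and only the summability over $N$ (the $3^{N}/((N-1)!)^{2}$-type factors in \eqref{eq:firstintegralrep}) keeps the total bounded. Your argument therefore misses a constant-order portion of the sum in \eqref{eq:2timescont}, and the single application of Proposition \ref{lem:countingstrategy} plus Stirling for $C_k$ does not establish the proposition. (The parts of your proposal that are correct — $\E[X_w]=(1/4)^{k}$, the Catalan asymptotics giving $nC_k/4^{k}\asymp t^{-3/2}$, and the uniformity of \eqref{eq:boundwordcountdyckII} in the regime $\sum_i k_i\asymp n^{2/3}$ — do coincide with ingredients of the paper's argument, but only for the well behaved contribution.)
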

\begin{proof}
At first observe that as every edge traversed exactly twice $\E[X_{w}]=\left(\frac{1}{4}\right)^{k}=\frac{1}{2^{2k}}$. Hence \eqref{eq:2timescont} is equivalent to proving $\#\mathcal{W}_{2k}=O\left( 2^{2k} \right).$ This is what we prove here. 

As we mentioned earlier, the key approach of this paper is to fix a skeleton word and then do the calculation of the number of words having that specific skeleton word and finally take the sum over all the skeleton words.

So at the beginning we fix a skeleton word. Let there be $N$ type $k\ge 2$ vertices. We at first fix the values of $\{p_{i}\}_{i=1}^{N}, \{ q_{i} \}_{i=1}^{N-1}, \{ r_{i} \}_{i=1}^{\sum_{j}N_{j}}$ where these parameters were defined in subsection \ref{subsec:skeletonwordcount}. %First observe that if there is a type $k>2$ vertex in the skeleton word, then the number of choices of $r_{i}$'s will be reduced. This is due to the fact that same vertex will be used to map different type $k>2$ instants of the skeleton word. We shall see later this will lead to a reduced count.
As we have all the edges traversed exactly twice, we have $\sum_{i=1}^{N}p_{i}= \sum_{i=1}^{N} q_{i}$. 
%However this does not fix any additional constraint. Simply because we have argued that we only need to fix $\{ q_{i} \}_{i=1}^{N-1}$ and fixing these choices $q_{N}$ is automatically fixed upto finitely many choices. It might be possible that we might get $\sum_{i=1}^{N}p_{i}\neq \sum_{i=1}^{N} q_{i}$ depending on the structure of the skeleton word but the following gives an example when we can take any $q_{i=1}^{N-1}$ such that $\sum_{i=1}^{j}p_{i}\ge \sum_{i=1}^{j}q_{i}$ for all $j$ and $\sum_{i=1}^{N} p_{i}= \sum_{i=1}^{N} q_{i}$. This is simply done by constructing the word without discontinuity. 

Now once the skeleton word is fixed, we spend $\sum_{i=1}^{N}\left(p_{i}+q_{i}\right)= 2\sum_{i=1}^{N}p_{i}=2m(say)$ many edges. In particular there will be $2m+1$ many Dyck paths adjacent to each of these edges of the skeleton word. We call them $\mathcal{P}_{1},\ldots, \mathcal{P}_{2m+1}$. Let their lengths be $2k_{1},\ldots, 2k_{2m+1}$ respectively. Then we have that 
\begin{equation}\label{eq:basicequality}
\sum_{i=1}^{2m+1} 2k_{i}+2m =2k.
\end{equation}
Since the vertices of the skeleton path are fixed, the end points of these Dyck paths are fixed.
By Proposition \ref{lem:countingstrategy}, we have the number of words corresponding to these Dyck paths are of the order $n^{\sum_{i=1}^{2m+1}k_{i}}$. On the other hand there are at most $n^{\sum_{i=1}^{N}p_{i}- N +1}= n^{m-N+1}$ many choices for the vertices in the skeleton word. So in particular, fixing the skeleton word and the Dyck paths adjacent to each edge in the skeleton word we get the number of words are bounded by $n^{m-N+1+ \sum_{i=1}^{2m+1}k_{i}}= n^{k-N+1}$. 

We now calculate the total number of choices of $\left( \mathcal{P}_{1},\ldots, \mathcal{P}_{2m+1}\right)$. By \eqref{def:mthcatalan} we have this is exactly equal to $\frac{2m+1}{2k+1}\binom{2k+1}{k+m+1}$. 

Now in the final step we take the sum over the choices of the skeleton words. At first we assume that all $\{p_{i}\}_{i=1}^{N}, \{ q_{i} \}_{i=1}^{N-1}$ and $\{ r_{i} \}_{i=1}^{\sum_{j}N_{j}}$ are positive. We also assume all type $k$ instants to be actually of type $2$. Hence $N= N_{2}= \sum_{j}N_{j}$. By subsection \ref{subsec:skeletonwordcount} this is bounded by 
\begin{equation}
3^{N} n^{\sum_{i}p_{i}-N+1}.
\end{equation}
Hence in this case we get the all possible words are bounded by 
 \begin{equation}\label{eq:wordcounttype2}
 \begin{split}
 \sum_{N}\sum_{p_{1},\ldots,p_{N}}\sum_{q_{i},\ldots,q_{N-1}}\sum_{r_{1},\ldots,r_{N}} 3^{N}  \frac{2m+1}{2k+1}\binom{2k+1}{k+m+1} n^{k-N+1}
 \end{split}
 \end{equation}
 Firstly observe that 
 \begin{equation}
 \begin{split}
 \binom{2k+1}{k+m+1} & = \frac{(2k)!}{(k-m)! (k+m)!} \frac{2k+1}{k+m+1} \le 2 \binom{2k}{k+m}
 \end{split}
 \end{equation}
 Hence we replace the $\binom{2k+1}{k+m+1}$ in \eqref{eq:wordcounttype2} by $2 \binom{2k}{k+m}.$ We now apply Stirling approximation to get that whenever $m\le k-1$
 \begin{equation}\label{eq:binomstirling}
 \begin{split}
 \binom{2k}{k+m}&\asymp (1+o(1))2^{2k+\frac{1}{2}}\frac{\sqrt{k}}{\sqrt{\pi(k^2-m^2)}}\exp\left(- \sum_{l\ge 2~|~ \text{even}}\frac{2m^{l}}{k^{l-1}l(l-1)}\right). 
 \end{split}
\end{equation}  
We shall apply \eqref{eq:binomstirling} in \eqref{eq:wordcounttype2}. Further we at first fix $N$ and sum over all the other indexes. Now our sum in \eqref{eq:wordcounttype2} is of the order of  
\begin{equation}\label{eq:wordcountmod}
\begin{split}
2^{2k+\frac{1}{2}}n^{k}\sum_{N}3^{N}\sum_{p_{1},\ldots,p_{N}}\sum_{q_{i},\ldots,q_{N-1}}\sum_{r_{1},\ldots,r_{N}} \frac{2m+1}{2k+1}\frac{\sqrt{k}}{\sqrt{\pi(k^2-m^2)}}\exp\left(- \sum_{l\ge 2~|~ \text{even}}\frac{2m^{l}}{k^{l-1}l(l-1)}\right)\left(\frac{1}{\sqrt{k}}\right)^{3N-3}
\end{split}
\end{equation}
The key idea is to represent the sum inside the summand $\sum_{p_{1},\ldots,p_{N}}\sum_{q_{i},\ldots,q_{N-1}}\sum_{r_{1},\ldots,r_{N}}$ as a Riemann sum of an integral. To do this we take the mesh size of $\frac{1}{\sqrt{k}}$ and write $x_{i}:=\frac{p_{i}}{\sqrt{k}}, X_{j}:=\sum_{i=1}^{j}x_{i}, y_{i}:= \frac{q_{i}}{\sqrt{k}}, Y_{j}:= \sum_{i}^{j}y_{i}$ and $z_{i}= \frac{r_{i}}{\sqrt{k}}$. Since the final function only depends on $m= \sum_{i=1}^{N}p_{i}$, we do a trick to change the co-ordinates from $x_{1},\ldots, x_{N}$ to $X_{1},\ldots, X_{N}$ and $y_{1},\ldots, y_{N-1}$ to $Y_{1},\ldots, Y_{N-1}$. This transformation is one to one and we have the additional constraint $X_{1}\le X_{2}\le \ldots \le X_{N} $ and similarly $Y_{1}\le Y_{2}\le \ldots \le Y_{N-1}.$ We also define $P_{1},\ldots,P_{N}$ and $Q_{1},\ldots,Q_{N}$ in the analogous way. Now we fix $m= \sum_{i=1}^{N}p_{i}$ and take the sum 
\begin{equation}
\begin{split}
&\sum_{p_{1}\ldots p_{N}} \sum_{q_{1}\ldots q_{N}} f(m)\\
&\le \sum_{P_{1}\le P_{2}\le \ldots P_{N}=m} \sum_{Q_{1}\le Q_{2}\le \ldots \le Q_{N-1}\le m} f(m)\\
&= \frac{\sqrt{k}^{2N-2}}{\sqrt{k}^{2N-2}}\sum_{P_{1}\le P_{2}\le \ldots P_{N-1}\le m} \sum_{Q_{1}\le Q_{2}\le \ldots \le Q_{N-1}\le m} f(m)\\
& \le f(m)\left(\sqrt{k}\right)^{2N-2} \int_{X_{1}\le X_{2}\le \ldots \le X_{N-1}\le \frac{m}{\sqrt{k}}} \int_{Y_{1}\le Y_{2}\le \ldots \le Y_{N-1}\le \frac{m}{\sqrt{k}}} 1\prod_{i=1}^{N-1}dX_{i} \prod_{i=1}^{N-1}dY_{i}\\
&= f(m) \left(\sqrt{k}\right)^{2N-2} \frac{1}{(N-1)!^2}\left(\frac{m}{\sqrt{k}}\right)^{2N-2} 
\end{split}
\end{equation}
Now we consider the sum over the indexes $r_{i}$'s. We know that for each $i$, $1 \le r_{i} \le m$. So 
\begin{equation}
\begin{split}
\sum_{r_{1}, \ldots r_{N}} 1 \le  m^{N}. 
\end{split}
\end{equation} 
Putting these in \eqref{eq:wordcountmod} we reduce our job to bound
\begin{equation}\label{eq:firstintegralrep}
\begin{split}
&2^{2k+\frac{1}{2}}n^{k}\sum_{N}3^{N} \sum_{N\le m \le k-1}\left(\sqrt{k}\right)^{3N-2} \frac{1}{(N-1)!^2}\left(\frac{m}{\sqrt{k}}\right)^{3N-2} \\
&~~~~~~~~\frac{2m+1}{2k+1}\frac{\sqrt{k}}{\sqrt{\pi(k^2-m^2)}}\exp\left(- \sum_{l\ge 2~|~ \text{even}}\frac{2m^{l}}{k^{l-1}l(l-1)}\right)\left(\frac{1}{\sqrt{k}}\right)^{3N-3}\\
& \le 2^{2k+\frac{1}{2}}n^{k}\sum_{N}3^{N}\frac{1}{(N-1)!^2}\sum_{N\le m \le k-1} {\sqrt{k}} \left(\frac{m}{\sqrt{k}}\right)^{3N-2}\frac{2m+1}{2k+1}\frac{\sqrt{k}}{\sqrt{\pi(k^2-m^2)}}\exp\left(-\frac{m^{2}}{k}\right)
\end{split}
\end{equation}
Now observe that the sum under the summand $\sum_{N\le m \le k-1}$ is a Riemann sum of the function
\begin{equation}\label{eq:secondintegralrep}
\int_{\frac{N}{\sqrt{k}}}^{\frac{k-1}{\sqrt{k}}} X_{N}^{3N-2}\left( 2X_{N} + \frac{1}{\sqrt{k}} \right) \frac{\sqrt{k}}{2\sqrt{k}+ \frac{1}{\sqrt{k}}}\frac{\sqrt{k}}{\sqrt{k-X_{N}^2}}\exp\left( -  X_{N}^{2} \right)dX_{N}
\end{equation} 
Note that the function in \eqref{eq:secondintegralrep} is not monotonically decreasing. However the function inside the integrand of \eqref{eq:secondintegralrep} can be uniformly dominated by a function of the form: 
\begin{equation}\label{eq:integraldom}
\zeta(X_{N}):=c_{1}\Gamma\left( \frac{3N+ \hat{\xi}}{2} \right)\exp\left( - X_{N}^2 \right)\mathbb{I}_{0\le X_{N}\le c}+ c_{2}X_{N}^{\xi+3N-1}\exp\left( - X_{N}^{2} \right)\mathbb{I}_{c\le X_{N}}
\end{equation} 
where $\xi$ and $\hat{\xi}$ are deterministic constants independent of $N$ and $k$ and the function $\zeta(X_{N})$ is monotonically decreasing.
As a consequence, 
\begin{equation}
\begin{split}
&\sum_{N\le m \le k-1} {\sqrt{k}} \left(\frac{m}{\sqrt{k}}\right)^{3N-2}\frac{2m+1}{2k+1}\frac{\sqrt{k}}{\sqrt{\pi(k^2-m^2)}}\exp\left(-\frac{m^{2}}{k}\right)\\
&\le \int_{\frac{N-1}{\sqrt{k}}}^{\frac{k-1}{\sqrt{k}}} \zeta(X_{N})dX_{N}\\
&\le c_{1}c_{1}'\Gamma\left( \frac{\hat{\xi}+3N}{2}\right)+  c_{2}\int_{0}^{\infty} X_{N}^{\xi+3N-1}\exp\left( - X_{N}^{2} \right)dX_{N}\\
&=c_{1}c_{1}'\Gamma\left( \frac{\hat{\xi}+3N}{2}\right)+ \frac{c_{2}}{2} \int_{0}^{\infty} z^{\frac{\xi + 3N}{2}-1}\exp(-z)dz\\
&= c_{1}c_{1}'\Gamma\left( \frac{\hat{\xi}+3N}{2}\right) + \frac{c_{2}}{2} \Gamma\left(\frac{\xi+3N}{2}\right) 
\end{split}
\end{equation} 
It is easy to see that 
\begin{equation}
\sum_{N =1}^{\infty} 3^{N} \left( c_{1}c_{1}'\Gamma\left( \frac{\hat{\xi}+3N}{2}\right) + \frac{c_{2}}{2} \Gamma\left( \frac{\xi+3N}{2} \right) \right)\frac{1}{(N-1)!^2}\asymp 1.
\end{equation}
Now we consider the boundary case $m=k$. When $m=k$ we have 
\begin{equation}\label{eq:boundary1}
\begin{split}
&\lim_{k \to \infty}\sum_{1\le N \le k}3^{N}\frac{1}{(N-1)!^2} \frac{2k+1}{2k+1}\binom{2k}{2k} \left( \sqrt{k} \right)^{3N-1}\frac{1}{2^{2k}}\\
&=\lim_{k \to \infty} \sum_{1 \le N \le k} 3^{N}\frac{1}{(N-1)!^2}\left( \sqrt{k} \right)^{3N-1}\frac{1}{2^{2k}}\\
&\asymp \lim_{k \to \infty}\sum_{1\le N \le k} N\exp\left(N\log 3- k\log 4 + \frac{3N-1}{2}\log k - 2\log(N)N- N  \right)\\
& \le \lim_{k \to \infty}\sum_{1\le N \le k} k \exp\left( N\log 3- k\log 4 + \frac{3N-1}{2}\log k - 2\log(N)N- N \right)
\end{split}
\end{equation} 
Maximizing the term in the exponential we get that $\log N = \frac{3}{4}\log k +c$. Hence among the terms inside the exponential $\exp(-k\log 4)$ dominates. Now if we take the sum it is $\text{Poly}(k)\frac{1}{2^{2k}} \to 0.$

\noindent 
Hence \eqref{eq:wordcounttype2} is asymptotically of the same order as $n^{k}2^{2k}$. 

Now we consider the other case when there is atleast one instant of type strictly greater than $2$ or there is atleast one instant where $q_{i}=0$. We prove that in these cases we get a negligible contribution. 

To start with we need a few notations. Let $N_{j}$ be the total number of vertices of type $j$ and for the $i$ th of type $j$ vertex let $\alpha_{i,j}$ be the number of instants with no downward chunk. It is easy to observe that $N= \sum_{j} (j-1) N_{j}$. As before we fix the values of $\{p_{i}\}_{i=1}^{N}, \{ q_{i} \}_{i=1}^{N-1}$ and $\{ r_{i} \}_{i=1}^{\sum_{j}N_{j}}$. Here we observe that some $q_{i}$'s will be $0$. From subsection \ref{subsec:skeletonwordcount} we have given $\{p_{i}\}_{i=1}^{N},\{ q_{i} \}_{i=1}^{N-1}$ and $\{ r_{i} \}_{i=1}^{\sum_{j}N_{j}}$ and $\{ \alpha_{i,j} \}$'s, the number of skeleton words of this kind is bounded by:
\begin{equation}\label{eq:genskeletonwordcount}
\begin{split}
& n^{\sum_{i=1}^{N-1}p_{i} -\sum_{j}(j-1)N_{j} +1}\prod_{j} (3j)^{(j-1)N_{j}}.
\end{split}
\end{equation}  
Now arguing as before we get the total number of all possible words are bounded by 
\begin{equation}\label{eq:totwordcountgen}
\begin{split}
&\sum_{N_{j}}\sum_{\text{positions of the } \alpha_{i,j}'s}\sum_{\alpha_{i,j}}\sum_{p_{1},\ldots,p_{N}}\sum_{q_{i},\ldots,q_{N-1}}\sum_{r_{i}'s}  \frac{2m+1}{2k+1}\binom{2k+1}{k+m+1} n^{k-N+1}\prod_{j}\left(3j\right)^{(j-1)N_{j}}\\
&\le \sum_{N_{j}} \sum_{\alpha_{i,j}}\sum_{p_{i},\ldots,p_{N}}\sum_{q_{i} \neq 0}\sum_{r_{i}'s} \binom{N}{\sum_{j}\sum_{x_{i,j}}^{N_{j}}\alpha_{i,j}}\frac{2m+1}{2k+1}\binom{2k+1}{k+m+1} n^{k-N+1}\prod_{j}(3j)^{(j-1)N_{j}}\\
& \le \sum_{N_{j}}\sum_{\alpha_{i,j}}\sum_{p_{i},\ldots,p_{N}}\sum_{q_{i} \neq 0}\sum_{r_{i}'s} 2^{N}\frac{2m+1}{2k+1}\binom{2k+1}{k+m+1} n^{k-N+1}\prod_{j}(3j)^{(j-1)N_{j}}.
\end{split}
\end{equation}
The rest of the argument is dedicated to bound \eqref{eq:totwordcountgen}. To begin with the number of $q_{i}$'s which are non zero are given by 
\begin{equation}\label{eq:noqnonzero}
N-M:= N-1 - \sum_{j}\sum_{i=1}^{N_{j}} \alpha_{i,j} +1= N - \sum_{j}\sum_{i=1}^{N_{j}} \alpha_{i,j}
\end{equation} 
Now we need to sum 
\begin{equation}\label{eq:sumpsumr}
\sum_{p_{1}\ldots,p_{N-1}~|~ \sum_{i}p_{i}=m} \sum_{r_{i}'s}1.
\end{equation}
This argument is somewhat similar to the calculation we did for type $j>2$ vertices. However we need to be more cautious here due to the additional factors $\prod_{j}(3j)^{(j-1)N_{j}}$. In particular for type $j$ instants, there will be $N_{j}$ many instants repeated $j-1$ times each. Now our task is to bound how many ways these positions can be arranged. Observe that we are dealing with $N_{j}$ positions repeated $(j-1)$ times where $j$ varies. In particular the number of ways to arrange these positions are 
\begin{equation}
\frac{N!}{\prod_{j} ((j-1)!)^{N_{j}} }.
\end{equation}  
However note that here we made some over counting. This is due to the fact when we take the choices of $n$ we already account for the ordering of the vertices. In particular $jN_{j}$ vertices of type $j$ will be counted $N_{j}!$ many times. So we get the exact count is 
\begin{equation}
\frac{N!}{\prod_{j} ((j-1)!)^{N_{j}} \prod_{j} N_{j}!}.
\end{equation}  
Now the total number of choices of \eqref{eq:sumpsumr} is bounded by 
\begin{equation}\label{eq:sumpsumrsim}
\begin{split}
&\sum_{p_{1}\ldots,p_{N-1}~|~ \sum_{i}p_{i}=m} \sum_{r_{i}'s}1\\
&\le \frac{1}{(N-1)!} m^{N-1} \frac{N!}{\prod_{j} ((j-1)!)^{N_{j}} \prod_{j} N_{j}!}m^{\sum_{j}N_{j}}\\
& =  \frac{N}{\prod_{j} ((j-1)!)^{N_{j}} \prod_{j} N_{j}!} m^{\sum_{j} jN_{j}-1}
\end{split}
\end{equation} 
%By the argument exactly same as Proposition \ref{lem:countingstrategy} we have this sum is bounded by 
%\begin{equation}\label{eq:typegenboundskeleton}
%\frac{1}{m}\prod_{j} \frac{1}{N_{j}!}m^{jn_{j}}
%\end{equation}
On the other hand by exactly same argument of the proof of part 1 of this proposition, we have that 
\begin{equation}\label{eq:sumq}
\begin{split}
&\sum_{q_{i}\neq 0} 1\\
&\le \frac{1}{\left(N- M\right)!} m^{N- \sum_{j}\sum_{x_{i,j}=1}^{N_{j}}\alpha_{i,j}}.
\end{split}
\end{equation} 
Let $N_{j}=\gamma_{j}N$. Hence $\sum_{j}(j-1)\gamma_{j}=1.$

\noindent
Putting \eqref{eq:sumpsumrsim} and \eqref{eq:sumq} in \eqref{eq:totwordcountgen} we have:
\begin{equation}\label{eq:totwordgensim}
\begin{split}
& \sum_{N_{j}}\sum_{\alpha_{i,j}}\sum_{p_{i},\ldots,p_{N}}\sum_{q_{i} \neq 0}\sum_{r_{i}'s} 2^{N}\frac{2m+1}{2k+1}\binom{2k+1}{k+m+1} n^{k-N+1}\prod_{j}(3j)^{(j-1)N_{j}}\\
& \le \sum_{N_{j}} \sum_{\alpha_{i,j}}\sum_{\text{positions of $0$ $q$ values}} \sum_{m}n^{k-N+1}2^{N}\frac{2m+1}{2k+1} \binom{2k+1}{k+m+1} \\
&~~~~~~~~~~\frac{N}{\prod_{j} ((j-1)!)^{N_{j}} \prod_{j} N_{j}!} m^{\sum_{j} jN_{j}-1} \frac{1}{\left(N- \sum_{j}\sum_{x_{i,j}=1}^{N_{j}}\alpha_{i,j}\right)!} m^{N- \sum_{j}\sum_{x_{i,j}=1}^{N_{j}}\alpha_{i,j}} \prod_{j}(3j)^{(j-1)N_{j}}\\
& \asymp n^{k}2^{2k+\frac{1}{2}}\sum_{N_{j}} \sum_{\alpha_{i,j}}\sum_{\text{positions}} \sum_{m}\left( \frac{1}{\sqrt{k}}\right)^{3(N-1)} 2^{N} \frac{\sqrt{k}}{\sqrt{\pi(k^2-m^{2})}}\exp\left( -\frac{m^2}{k} \right)\frac{2m+1}{2k+1}\\
&~~~~~~~~~~ \frac{N}{\prod_{j} ((j-1)!)^{N_{j}} \prod_{j} N_{j}!}\frac{1}{\left(N- \sum_{j}\sum_{x_{i,j}=1}^{N_{j}}\alpha_{i,j}\right)!} m^{\sum_{j}jN_{j}+ N- \sum_{j}\sum_{x_{i,j}=1}^{N_{j}}\alpha_{i,j}-1} \prod_{j}(3j)^{(j-1)N_{j}}\\
& \asymp n^{k}2^{2k+\frac{1}{2}} \sum_{N_{j}}\sum_{\alpha_{i,j}}\sum_{\text{positions}} \left(  \frac{1}{\sqrt{k}}\right)^{3N  -{\sum_{j}jN_{j}-N+\sum_{j}\sum_{x_{i,j}=1}^{N_{j}}\alpha_{i,j}}}2^{N} \Gamma\left( \frac{\sum_{j}jN_{j}+N-\sum_{j}\sum_{x_{i,j}=1 }\alpha_{i,j}+\xi}{2} \right)\\
& ~~~~~~~~~ \frac{N}{\prod_{j} ((j-1)!)^{N_{j}} \prod_{j} N_{j}!}\frac{1}{\left(N- \sum_{j}\sum_{x_{i,j}=1}^{N_{j}}\alpha_{i,j}\right)!}\prod_{j}(3j)^{(j-1)N_{j}}\\
& \le n^{k}2^{2k+\frac{1}{2}} \sum_{N_{j}}\sum_{\alpha_{i,j}}\sum_{\text{positions}}\left(  \frac{1}{\sqrt{k}}\right)^{3N  -{\sum_{j}jN_{j}-N+\sum_{j}\sum_{x_{i,j}=1}^{N_{j}}\alpha_{i,j}}}2^{N}\Gamma \left( N + \frac{\sum_{j} N_{j}}{2}-\frac{M}{2} + \frac{\xi}{2} \right)\\
&~~~~~~~~~ \frac{N}{\prod_{j} ((j-1)!)^{N_{j}} \prod_{j} N_{j}!}\frac{1}{\left(N- M\right)!} \prod_{j}\exp\left( (j-1)N_{j}\log(3j) \right)\\
&\le n^{k}2^{2k+\frac{1}{2}} \sum_{N_{j}}\sum_{\alpha_{i,j}}\sum_{\text{positions}}\left(  \frac{1}{\sqrt{k}}\right)^{3N  -{\sum_{j}jN_{j}-N+\sum_{j}\sum_{x_{i,j}=1}^{N_{j}}\alpha_{i,j}}} 2^{N}C^{N} (2N)^{\frac{\xi}{2}+ \sum_{j}\frac{N_{j}}{2}} \frac{\Gamma\left(N- \frac{M}{2}\right)}{\left( N-M \right)!}\\
&~~~~~~~~~~N \exp\left( - \sum_{j}\left(N\gamma_{j}\right)\log\left( N\gamma_{j}\right)- \sum_{j}N\gamma_{j}(j-1)\log(j-1) \right)\prod_{j}\exp\left( (j-1)N_{j}\log (3j) \right)
\end{split}
\end{equation}
This analysis is somewhat cumbersome and tedious. We analyze them term by term by term. 

\noindent
%We at first start with the term $\prod_{j}\prod_{x_{i,j}=1}^{N_{j}}\exp\left( \left(j-\alpha_{i,j}\right) \log\left(3\alpha_{i,j}\right) \right)$. We at first fix $j$ and maximize the value of 
%\begin{equation}
%f(\alpha)=(j-\alpha)\log(3\alpha)
%\end{equation}
%with respect to $\alpha$.
%Differentiating $f(\alpha)$ with respect to $\alpha$ we get 
%\begin{equation}
%\begin{split}
%\frac{df(\alpha)}{d\alpha}&= - \log(3\alpha) + (j-\alpha)\frac{3}{3\alpha}\\&=-\log(3\alpha) + \frac{j}{\alpha}-1.
%\end{split}
%\end{equation}
%Setting the derivative to $0$, we have 
%\begin{equation}
%\begin{split}
%-\log(3\alpha) +\frac{j}{\alpha}-1=0
%\end{split}
%\end{equation}
%Now observe that $f''(\alpha)$ is negative for all $\alpha$. Hence the solution to $f'(\alpha)=0$ is the unique maximizer. 
%
%It is easy to see that for large $j$, the optimal $\alpha$ is of the order $\frac{j}{\log j}$
%As a consequence, we have the maximum value of $(j-\alpha)\log(3\alpha)$ is less than or equal to $\log\left(3\frac{j}{\log j}\right)^{j}= j\log(3j)- j\log \log(j)$. Hence 
%\begin{equation}
%\begin{split}
%&\prod_{j}\prod_{x_{i,j}=1}^{N_{j}}\exp\left( \left(j-\alpha_{i,j}\right) \log\left(3\alpha_{i,j}\right) \right)\le \exp\left( \sum_{j} N_{j} \left(j\log(3j)- j\log \log j\right) \right)\\
%& \le \exp \left( N\sum_{j}\gamma_{j} \left( j\log j + j\log 3 - j \log \log j \right)  \right)
%\end{split}
%\end{equation}

We at first simply bound $\frac{\Gamma\left( N-\frac{M}{2} \right)}{N-M}\le N^{\frac{M}{2}}= \exp\left( \frac{M}{2}\log N \right)$. 

Next we look at the term 
\begin{equation}\label{eq:termentropy}
\begin{split}
&\exp\left( - \sum_{j}\left(N\gamma_{j}\right)\log\left( N\gamma_{j}\right) \right)\\
& = \exp \left( - N \sum_{j} \gamma_{j}\left(\log(N)+ \log(\gamma_{j})\right) \right)\\
& = \exp \left( - N\log N \sum_{j}\gamma_{j} - N \sum_{j} \gamma_{j}\log(\gamma_{j}) \right)
\end{split}
\end{equation}
Our first job is to bound the term 
\[
\exp\left( -N \sum_{j} \gamma_{j}\log \gamma_{j} \right)
\]
So we come to the following optimization problem 
\begin{equation}\label{eq:optentropy}
\begin{array}{ll}
&\max -\sum_{j}\gamma_{j}\log(\gamma_{j})\\
&\text{subj. to} \\
& \sum_{j}(j-1)\gamma_{j}=1 
\end{array}
\end{equation}
We at first transform the variables to $z_{j}= (j-1)\gamma_{j}$. This reduces the optimization problem to 
\begin{equation}\label{eq:optentropytransformed}
\begin{array}{ll}
&\max -\sum_{j}\frac{z_{j}}{j-1}\log\left(\frac{z_{j}}{j-1}\right)\\
&\text{subj. to} \\
& \sum_{j} z_{j}=1 
\end{array}
\end{equation}
Now we apply the method of Lagrange multiplier to optimize 
\begin{equation}\label{eq:lagrangeentropy}
\begin{split}
f(z_{1},\ldots, z_{N}, \lambda)= - \sum_{j}\frac{z_{j}}{(j-1)}\log \left( \frac{z_{j}}{(j-1)} \right) -\lambda \left( \sum_{j} z_{j}-1 \right).
\end{split}
\end{equation}
Observe that the function $(z_{1},\ldots,z_{N})\mapsto -\sum_{j}\frac{z_{j}}{j-1}\log\left(\frac{z_{j}}{j-1}\right)$ is a concave function and the set $\sum_{j}z_{j}=1$ is a convex set. Hence the method of Lagrange multiplier gives the unique maximizer.

Taking the partial derivative of $f(z_{1},\ldots, z_{N},\lambda)$ with respect to $z_{j}$ and setting it to $0$  we have 
\begin{equation}
\begin{split}
& \frac{\partial f(z_{1},\ldots, z_{N},\lambda)}{\partial z_{j}}=  - \frac{1}{j-1}\log \left( \frac{z_{j}}{j-1} \right)- \frac{1}{j-1} -\lambda =0 \\
& \Rightarrow - 1 - \lambda(j-1) = \log\left( \frac{z_{j}}{j-1} \right)\\
& \Rightarrow  \exp\left(-1 - \lambda (j-1) \right) = \frac{z_{j}}{(j-1)}\\
& \Rightarrow z_{j}= (j-1) \exp\left( -1 - \lambda(j-1) \right).
\end{split}
\end{equation}
Now we apply the constraint to have 
\begin{equation}\label{eq:constraint}
\sum_{j=2}^{N}\left( j-1 \right)^{2} \exp \left(-1 - \lambda (j-1)  \right)=1. 
\end{equation}
It is easy to observe that the solution to the above equation is unique and the value $\lambda$ remains uniformly bounded over $N$.
As a consequence the value 
\begin{equation}
\begin{split}
&-\sum_{j} \frac{z_{j}}{(j-1)}\log\left( \frac{z_{j}}{(j-1)} \right)\\
& = \sum_{j} \exp\left( -1 - \lambda(j-1) \right)\left( 1+ \lambda (j-1) \right)
\end{split}
\end{equation}
also remains uniformly bounded. 
Hence \eqref{eq:termentropy} is bounded by 
\begin{equation}\label{eq:termentropyfinal}
C'^{N} \exp\left( -N\log N  \sum_{j}\gamma_{j} \right).
\end{equation}
Now if we have a look at \eqref{eq:totwordgensim}, then inside the exponential only linear functions of $\gamma_{j}$'s remain. To make things clear we have a look at the term inside the summand of \eqref{eq:totwordgensim}. 
\begin{equation}\label{eq:firstreduction}
\begin{split}
& \left( \frac{1}{\sqrt{k}} \right)^{\sum_{j}(j-2)N_{j}+M}NC''^{N} \exp\left(\log N \frac{\xi}{2} + N\log N \sum_{j}\frac{\gamma_{j}}{2}  + \frac{M}{2}\log N\right.\\
&~~~~~~~~\left.  -  N \log N \sum_{j}\gamma_{j} - \sum_{j}N\gamma_{j}(j-1)\log(j-1)+ N \sum_{j} \gamma_{j}\left( (j-1)\log j + (j-1)\log 3 \right)\right)
\\
&= N^{\frac{\xi}{2}+1}  C''^{N} \exp\left(- \frac{\sum_{j}(j-2)N_{j}}{2}\log k - \frac{M}{2}\log k +\frac{M}{2}\log N - \frac{N\log N}{2} \sum_{j} \gamma_{j}\right.\\
&~~~~~~~~\left. - \sum_{j} N \gamma_{j}\left((j-1)\log(j-1) - (j-1)\log j  \right)  + N \sum_{j}\gamma_{j}(j-1)\log(3)\right)
\end{split}
\end{equation}
Now $(j-1)\log j -(j-1)\log(j-1)$ is of the order $\log(j+1) +1$. So
\begin{equation}
\begin{split}
\sum_{j}\gamma_{j}\left( (j-1)\log j - (j-1)\log (j-1) \right)\le c
\end{split}
\end{equation} 
implying 
\[
\exp\left( N \sum_{j} \gamma_{j}\left( j\log j - (j-1)\log(j-1) \right) \right)\le C'''^{N}.
\]
Same can be said about $\exp\left( N\log 3 \sum_{j}\gamma_{j}(j-1) \right)$. Also $\frac{M}{2}\left( \log N - \log k \right)<0$. 

As a consequence, the only interesting term is 
\begin{equation}\label{eq:interest}
\begin{split}
&\exp\left(- \frac{N\log N}{2} \sum_{j} \gamma_{j} -  \frac{\sum_{j}(j-2)N_{j}}{2}\log k  \right)\\
&= \exp\left(  -\frac{N\log N}{2} \sum_{j}\gamma_{j}- \frac{N\log k}{2}\sum_{j}(j-2)\gamma_{j}\right)
\end{split}
\end{equation}
Now we come to another optimization problem:
\begin{equation}\label{eq:opt2}
\begin{array}{ll}
&\max - N\log N \sum_{j}\gamma_{j} - N\log k \sum_{j} \gamma_{j} (j-2)\\
& \text{subj. to}\\
& \sum_{j} (j-1) \gamma_{j}=1
\end{array}
\end{equation}
We again transform it to $z_{j}=(j-1) \gamma_{j}$ to reduce the problem to
\begin{equation}\label{eq:optred}
\begin{array}{ll}
& \max - N\log N \sum_{j} \frac{z_{j}}{(j-1)} - N \sum_{j} \frac{z_{j}}{(j-1)} (j-2)\\
& \text{subj. to}\\
& \sum_{j} z_{j}=1.
\end{array}
\end{equation} 
This is equivalent to minimize 
\begin{equation}\label{eq:finalmin}
\begin{split}
&\frac{N\log N}{j-1} + \frac{N \log k (j-2)}{(j-1)}\\
&= \frac{N\log N}{j-1} + N\log k -\frac{N\log k}{j-1}\\
&= \frac{N}{j-1} \left( \log N - \log k \right) + N\log k. 
\end{split}
\end{equation}
Since $j-1\ge 1$ and $\log k\ge \log N$, we have the minimum value of \eqref{eq:finalmin} is $N\log N$.  
%over $j$. Now $\frac{j}{j-1}\le 2$. So we reduce the problem further to minimize 
%\begin{equation}
%\begin{split}
%f(j)=\frac{N\log N}{j-1} + 2N \log \log j 
%\end{split}
%\end{equation} 
%differentiating over $j$ and equate to $0$, we have 
%\begin{equation}\label{eq:optimalsol}
%f'(j)= -\frac{N\log N}{(j-1)^2} + 2 N \frac{1}{j\log j} =0.
%\end{equation}
%We now prove that the solution of \eqref{eq:optimalsol} is the global minimizer. To do this we consider the ratio $\frac{(j-1)^2}{j\log j}$. We prove that after some finite constant this function is monotonically increasing. This will prove the required result. Now 
%\begin{equation}
%\begin{split}
%&\tilde{f}(j) =\frac{(j-1)^2}{j\log j}= \frac{j}{\log j}- \frac{2}{\log j} + \frac{1}{j\log j}\\
%\Rightarrow & \frac{d\tilde{f}(j)}{dj}= \frac{1}{\log j}- \frac{j-2}{j(\log j)^2}- \frac{\log j +1}{(j\log j)^2}\\
%&= \frac{j^2\log j - j(j-2)-(\log j +1)}{(j\log j)^2}
%\end{split}
%\end{equation} 
%So $\frac{d\tilde{f}(j)}{dj}>0$ after some finite constant. As a consequence after some finite constant $f'(j)$ has a unique zero and before that $f'(j)$ is negative and after that $f'(j)$ is positive. On the other hand for large enough $N$ and uniformly bounded $j$, $f'(j)$ is strictly negative. So for large enough $N$ the solution to $f'(j)=0$ is the unique minimzer.   
%
%
%It is easy to see that the optimizer is of the order $\log N \log \log N$
%Hence the minimum value of \eqref{eq:optred} is $-N f(N)$ where $f(N)$ is a function of $N$ which go to infinity as $N$ diverges but at a very slow rate. 

Now we take the sum over all possible $N_{j}$'s and $\alpha_{i,j}$'s. Firstly $N_{j}$ are chosen in two steps. At first we fix $x$ and choose $x$ many positions from $\{ 1,2\ldots, N \}$. This can be done in $\binom{N}{x}$ ways. This chosen set is the collection of $j$'s such that $N_{j}\neq 0$. Now we partition $N$ balls into $\sum_{j}(j-1)$ bins such that each bin has nonzero number of balls. This can be done in at most $\binom{N-1}{\sum_{j}(j-1)-1}\le 2^{N-1}$ ways. As a consequence the total number of choices of $N_{j}$'s is bounded by $4^{N}$. Finally we choose  $\alpha_{i,j}$ many points from $(j-1)$. These are the positions where the corresponding $q$ value is $0$. So we have 
\begin{equation}
\sum_{\alpha_{i,j}} \sum_{\text{positions of $0$ $q$ values}}1\le \sum_{\alpha_{i,j}} \binom{j-1}{\alpha_{i,j}}\le 2^{\sum_{j}(j-1)N_{j}}= 2^{N}
\end{equation} 
Hence the sum in \eqref{eq:totwordgensim} is finite. Now we need to prove it actually goes to $0$. This is true due to the fact that we simply bound the term $\exp\left( \frac{M}{2}\left( \log N - \log k \right) \right)$ by $1$. However for small $N$ this term goes to zero. To make this clear we have a look at \eqref{eq:totwordgensim}. We have this equation reduces to  
\begin{equation}
\sum_{N} \exp\left( \frac{M}{2}(\log N -\log k) - \frac{N\log N}{2} + CN \right)
\end{equation}
The tail of the above equation also decays like $\exp\left( - c N\log N \right)$ for some finite constant $c$. So if we take $N_0$ large enough such that $\exp\left( -c N\log N \right)\le \frac{1}{\sqrt{k}}$, then $$\sum_{N\le N_{0}}\exp\left(\frac{M}{2}\left( \log N - \log k\right) - \frac{N\log N}{2}\right)\le \left(\frac{1}{\sqrt{k}}\right)^{M(1-\varepsilon)} $$ 
for any $\varepsilon >0$. On the other hand 
$$
\sum_{N\ge N_{0}} \exp\left( \frac{M}{2}\left( \log N - \log k \right) - \frac{N\log N}{2} \right)\le \left( \frac{1}{\sqrt{k}} \right).
$$
So \eqref{eq:totwordgensim} goes to $0$ whenever $M\ge 1$. On the other hand if there exists some $j>2$ such that $\gamma_{j}>0$, the same argument can be used to prove that \eqref{eq:totwordgensim} goes to $0$. This completes the proof. 
\end{proof}
%We now state a result about general word count of a Wigner matrix. 
%\begin{proposition}
%Suppose 
%\end{proposition}
Now we come to an important proposition for proving the universality. 
\begin{proposition}\label{prop:ge3}
Suppose the entries $x_{i,j}$ satisfy condition (iii) of Assumption \ref{ass:wig}. Let $\mathcal{W}_{\ge 3,k}$ denote the class of words of length $k+1$ where every edge is traversed at least twice and some edge is traversed at least thrice. Then following is true whenever $k= [tn^{\frac{2}{3}}]$  for some $t\in (0,\infty)$:
\begin{equation}\label{eq:wigge3}
\frac{1}{n^{\frac{k}{2}}}\sum_{w \in \mathcal{W}_{\ge 3,k}} \E\left[ X_{w} \right]\to 0.
\end{equation}
\end{proposition}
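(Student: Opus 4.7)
The plan is to mirror the strategy of Proposition \ref{prop:2times}, with additional bookkeeping to track the extra suppression coming from edges of multiplicity $\ge 3$. Parametrize each $w\in\mathcal W_{\ge 3,k}$ by its multiplicity profile $(n_r)_{r\ge 2}$, where $n_r$ is the number of distinct edges traversed exactly $r$ times. Set $L:=\sum_{r\ge 2}n_r$ (number of distinct edges) and $s:=\sum_{r\ge 3}(r-2)n_r$ (total excess), so that $\sum_{r\ge 2} r n_r=k$, $L=(k-s)/2$, and $s\ge 1$ on $\mathcal W_{\ge 3,k}$. By Assumption \ref{ass:wig}(iv), the sub-Gaussian moment bound yields
\[
\E X_w \;\le\; (1/4)^{n_2}\prod_{r\ge 3}(C r)^{r n_r/2}.
\]

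Next, fix a skeleton word as in Section \ref{subsec:skeletonwordcount} and a profile $(n_r)_{r\ge 2}$, and count the equivalence classes of words with these data. Following Proposition \ref{lem:countingstrategy}, the intermediate excursions are now \emph{generalized} Dyck-like walks in which edges may be repeated more than twice; since $|V_w|\le L+1$, the combined normalized vertex factor is $n^{|V_w|-k/2}\le n^{\,1-s/2}$. The shape count extending \eqref{def:mthcatalan} is bounded by the number of ways to insert $s$ extra edge-visits into a base Dyck structure of length $k-s$; this contributes at most a factor of order $(Ck)^{s}$ divided by the natural symmetry factors $\prod_r n_r!$. Combined with the skeleton sum \eqref{countskeleton} and the Stirling/Catalan expansion \eqref{eq:binomstirling} used in Proposition \ref{prop:2times}, the contribution of a profile of excess $s$ is bounded by
\[
n^{k}\cdot \frac{(Ck)^{s}}{n^{s/2}}\cdot\prod_{r\ge 3}\frac{(4Cr)^{r n_r/2}}{n_r!},
\]
up to polynomial-in-$k$ corrections absorbed into the constants, and modulo the baseline $O(1)$ factor already controlled in Proposition \ref{prop:2times}.

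Finally, sum over profiles $(n_r)_{r\ge 3}$ with $s\ge 1$. Since $k\asymp n^{2/3}$, the factor $(Ck)^s/n^{s/2}$ is at worst $n^{s/6}$, which is naively divergent; the proof must show that after cancellations from the Catalan factor and the $n_r!$ entropy factors it is in fact dominated by $n^{-\delta s}$ for some fixed $\delta>0$, so that $\sum_{s\ge 1}(n^{-\delta})^{s}\to 0$. The main obstacle will be performing this delicate balancing, which parallels but is strictly sharper than the optimization argument near the end of Proposition \ref{prop:2times}, where the quadratic suppression $\exp(-m^2/k)$ from Stirling played the decisive role; here one must verify that its analog persists for generalized Dyck excursions with higher-multiplicity edges. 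A secondary subtlety is that a single edge of very large multiplicity $r$ produces a moment factor $(Cr)^{r/2}$ that grows super-exponentially in $r$; this is controlled by the factorial $1/(r-1)!$ from the skeleton sum \eqref{countskeleton} together with an entropy-type optimization over $r$ directly analogous to the one performed around \eqref{eq:optentropy}--\eqref{eq:finalmin}.
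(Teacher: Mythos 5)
Your setup (profile decomposition by multiplicities $(n_r)$, excess $s=\sum_{r\ge3}(r-2)n_r$, the loss of $n^{s/2}$ in the vertex count against a naive gain of $(Ck)^s$ placement choices) is a reasonable frame, but the proposal stops exactly where the real difficulty lies, and you say so yourself: the factor $(Ck)^{s}/n^{s/2}\asymp n^{s/6}$ diverges, and you only assert that "the proof must show" it is in fact $n^{-\delta s}$ after cancellations. That cancellation is the entire content of the proposition, and nothing in your outline supplies it. The quadratic Stirling suppression $\exp(-m^2/k)$ that you invoke controls the sum over skeleton lengths $m$ (as in Proposition \ref{prop:2times}); it does nothing to reduce the positional entropy of an extra traversal of an edge, which is where the spurious factor $k\asymp n^{2/3}$ per unit of excess comes from. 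Likewise the symmetry factors $\prod_r n_r!$ and the factor $1/(r-1)!$ cannot rescue the case of finitely many edges of multiplicity $3$ or $4$ (e.g.\ $s=1$ or $2$), which is precisely the dominant regime; for $s=1$ your bound is genuinely of order $n^{1/6}$ and no entropy-in-$r$ optimization touches it.

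The paper closes this gap with a much more refined encoding that your proposal does not anticipate. The key points are: (a) a third (or higher) traversal of an edge cannot sit anywhere in the word — it must either coincide with a return of the path to a specific level ("repetition of the first kind"), and Lemma \ref{lem:universal} shows the number of such returns has an exponential tail under the uniform measure on Dyck paths, so the placement cost is $O(1)$ rather than $O(k)$; or (b) it forces an additional type $j\ge2$ vertex coincidence ("repetition of the second kind"), which costs a full factor of $n$ in the vertex count against only $O(k)$ positional choices, a net gain of $k/n=n^{-1/3}$. Making this precise requires the skeleton-word/filled-up-path machinery, the ignored versus non-ignored instants (to avoid overcounting permutations of the type $j\ge2$ vertices), a separate Euler-circuit argument to handle edges traversed an odd number of times (needed because the entries are not assumed symmetric, so such terms do not vanish), and a final entropy optimization giving the factor $\exp\{N\log N(\tfrac{3\cdot500}{4\cdot999}-\tfrac12)\}$ together with powers of $n^{-1/2}$ per excess edge. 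Without an argument playing the role of (a) and (b), the profile-level bound you wrote cannot be improved to a convergent one, so the proposal as it stands does not prove the proposition.
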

The proof Proposition \ref{prop:ge3} is somewhat long and tedious. However this is the most important result in the paper. So we divide the proof in several subsections and lemmas.
\subsection{The broad idea}
We at first sketch the outline of the proof. Firstly observe that allowing repetitions of edges more that twice will make $\E\left[ X_{w} \right]$ ``large" so we have to do a refined count of such words to kill this blow up. The decrease in counts of the words comes from the fact that whenever an edge is repeated more that twice, it fixes some constraints. The main idea of this refined count is to use the constraints in a systematic way. We shall analyze the count of the skeleton word and Dyck paths separately.Now we shall discuss the main steps of the proof very briefly\\
\textbf{(i)Two types of repetitions:}
At the beginning one might observe that when an edge is repeated more than twice, this repetition can happen the following two ways. Firstly the repeated edge can be such that the instant corresponding to one of its endpoint is an instant where the path comes to a given level several times. On the other hand the repetition of the edge might not use the level of an instant where the path comes several times. One might look at the following figures for clarification. 
\begin{figure}[H]
        \begin{center}
                \includegraphics[width=0.4\textwidth]{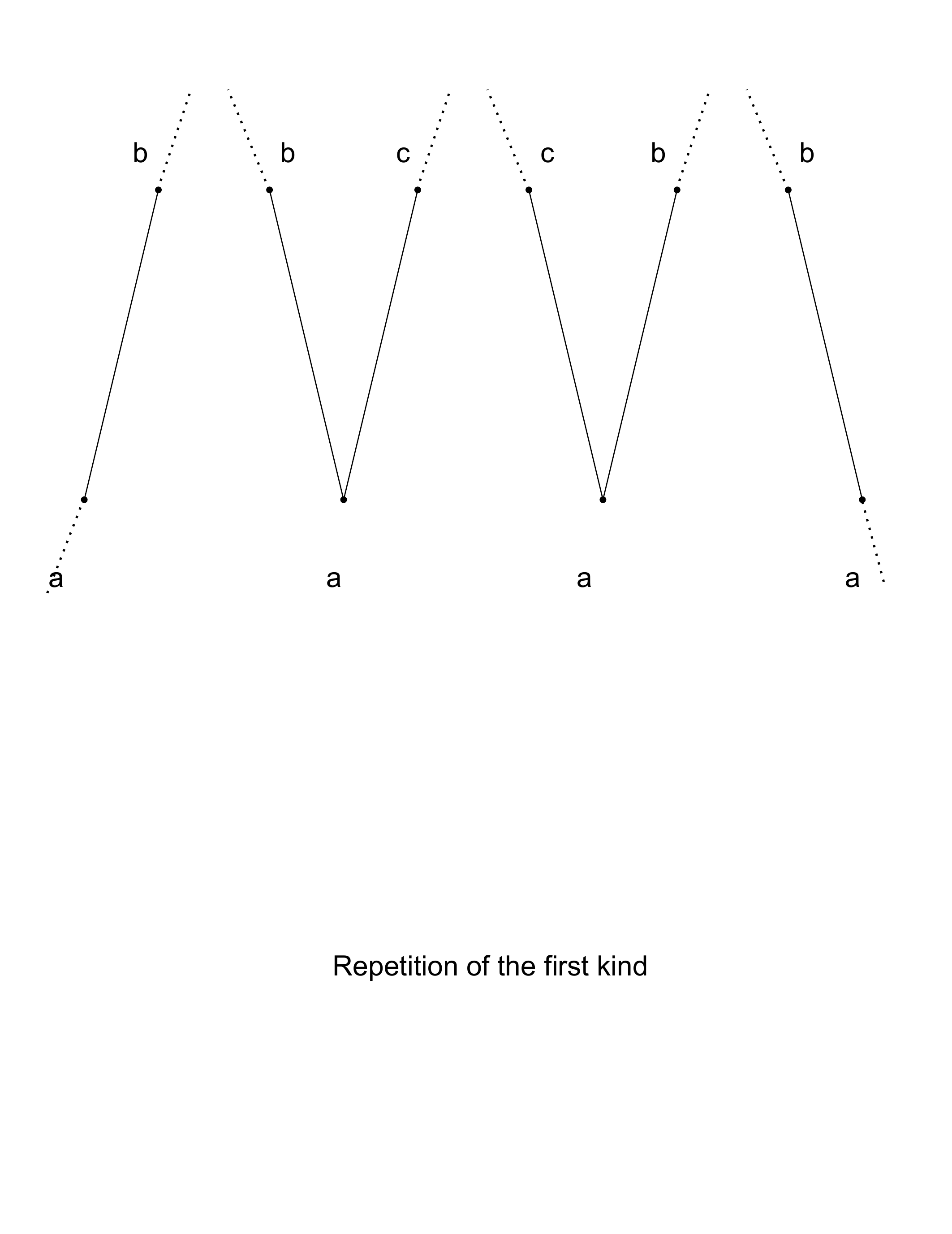}
        ~ %add desired spacing between images, e. g. ~, \quad, \qquad etc.
          %(or a blank line to force the subfigure onto a new line)
      \end{center}   
 \end{figure}
 \begin{figure}[H]
        \begin{center}
                \includegraphics[width=0.4\textwidth]{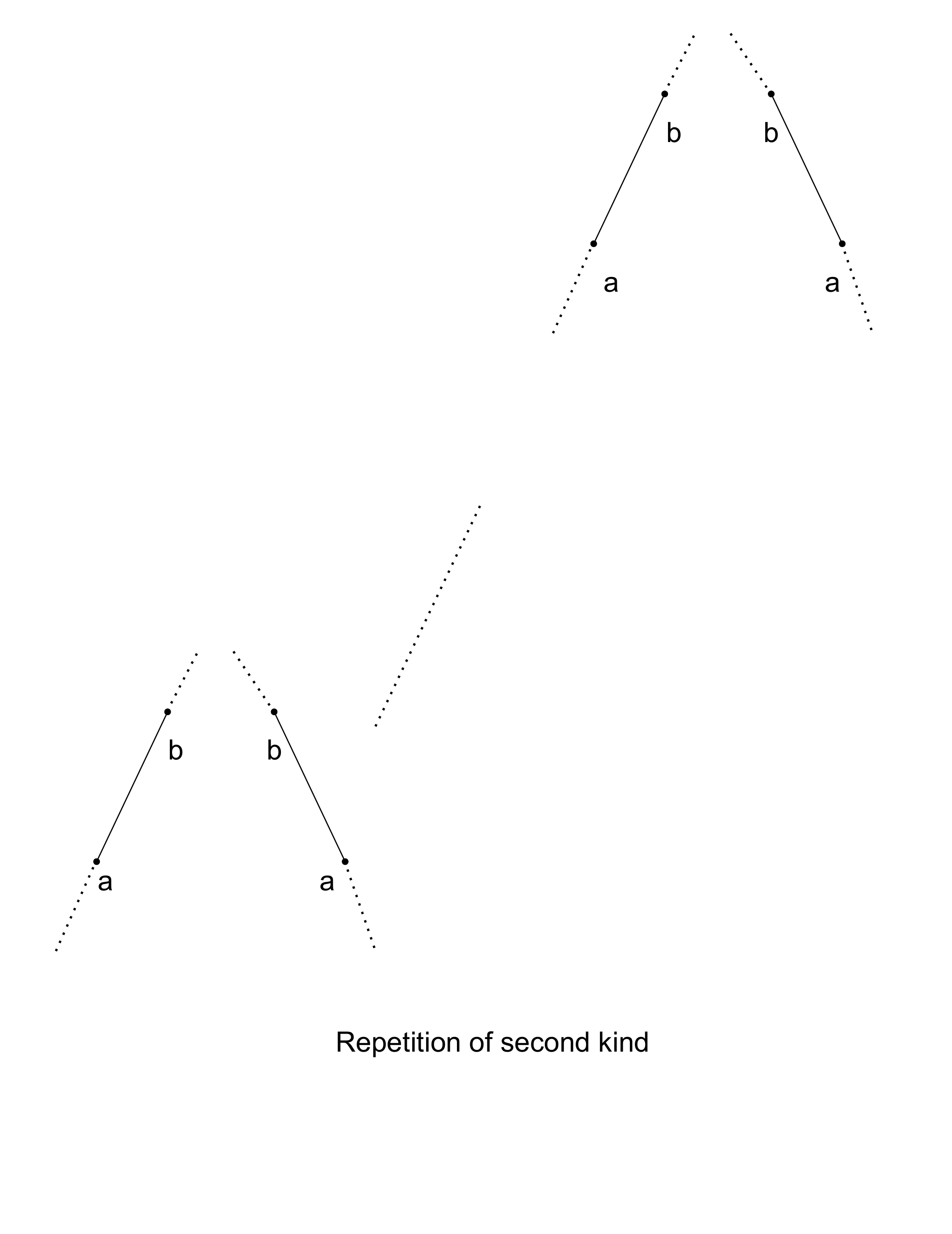}
        ~ %add desired spacing between images, e. g. ~, \quad, \qquad etc.
          %(or a blank line to force the subfigure onto a new line)
      \end{center}   
 \end{figure}
Lemma \ref{lem:universal} asserts that for a ``typical" Dyck path the return to a specific level can not be too large. The other case needs to be dealt a little differently.\\
\textbf{(ii)A strategy to construct the skeleton word keeping the returns to a specific level in mind:} As returns to a specific level reduce the number of free parameters, we produce a strategy to construct the skeleton words corresponding given the number of times the path corresponding to the skeleton word returns to a specific levels. This is done in two steps: Observe that the path corresponding to the skeleton word is also a Dyck path where some edges are removed from it. We at first construct the Dyck path. Then from the Dyck path we construct the skeleton word. The reader might observe by now that typically the Dyck path corresponding to a skeleton word is quite smooth in the sense there are only finitely many spikes ($N$ many).
This gives us an advantage. In Lemma \ref{lem:noncrossing} and Algorithm \ref{alg:simplewig!} we describe a strategy to construct the Dyck path corresponding to the skeleton word. Lemma \ref{lem:noncrossing} is well known in literature. However we created a bijection which is useful for our purpose. In Algorithm \ref{Alg:skeleton2} we describe the construction of the skeleton word from the Dyck path.\\
\textbf{(iii)Permutation of the type $j\ge 2$ vertices:} Another important aspect which reduces the count is the permutation of the type $j$ vertices. We have seen that in the skeleton word there are $N$ many and in the Dyck paths there are $\Gamma$ many type $j \ge 2$ vertices. All edges repeated more than twice will correspond to a type $j \ge 2$ vertex. However there are some additional constraints here. For example if an edge is a repetition of the first type, then the vertex corresponding to a type $j$ instant can appear only immediately after the return to a certain level. Hence we take care of such vertices in this way. On the other hand if an edge is a repetition of the second then the corresponding two type $j \ge 2$ vertices come as a pair. In this case we shall only consider one of them and seek a position where to insert the other one. As a consequence we only consider a permutation of a reduced number of vertices which we call non-ignored vertices and when all the other parameters are fixed we seek for positions to insert the other vertices. We call them ignored vertices. However there might be cases where different choices of non-ignored and ignored vertices lead to the same word. To avoid this over counting we shall impose certain constraints. These will be discussed in the course of the proof.\\
\textbf{(iv)Considering edges with odd number of traversals:}One of the novelties of this paper is being able to handle general non-symmetric distributions. This leads us to consider word when an edge is traversed odd number of times. This makes the calculation a little bit trickier. However this can be handled by implementing proper constraints. One might look at the subsequent parts.
%The main idea is as follows. When the skeleton word comes to a level many times, we are fixing certain constraints on the $p_{i}$, $q_{i}$ and $r_{i}$'s. This will reduce the number of word count. In this algorithm we formalize this intuition.

%To begin with at first we fix an integer $N'$. This number $N'$ is lesser than the number $N$. This actually denotes the number of free parameters in the skeleton word. However $N$ denotes the number of type $j\ge 2$ instants. We at first fix $N'$ and some values $p'_{1},\ldots, p'_{N'}$; $q'_{1},\ldots, q'_{N} $ where we impose the constraint that $\sum_{i} p'_{i}\le m+ m'$  and $\sum_{i} q'_{i}\le m+m'$. We also fix the $r_{i}$'s. As usual these positions denote the locations of the initial positions of a type $j \ge 2$ instants.
%\subsection{Some notations}
  
\subsection{Machinery to prove Proposition \ref{prop:ge3}}
In this subsection we give detailed description of the machinery needed to prove Proposition \ref{prop:ge3}.

\noindent
\textbf{(i) Results regarding the number of returns to a certain level of a Dyck path}
\begin{lemma}\label{lem:universal}
Consider the uniform probability measure on $m$ Dyck paths of length $2k_{1},\ldots, 2k_{m}$ such that $\sum_{i=1}^{m} 2k_{i}= 2k$ a fixed number. We call this probability measure $\mathbb{P}_{D,k,m}$. We now fix $\tau$ many levels $q_{1},\ldots, q_{\tau}$. Let $N_{k_{i},m}(q_{i})$ denote the number of returns to level $q_{i}$ in the $i$ th Dyck path before falling down. Then the random variables $N_{k_{1},m}(q_{1}),\ldots , N_{k_{\tau},m}(q_{\tau})$ can be stochastically dominated by $\tau$ many i.i.d. random variables with common distribution $X$ such that $\mathbb{P}\left[X\ge l\right]\le \frac{k^{\frac{3}{2}}}{2^{l}}$.

%Consider the uniform probability measure supported on Dyck paths of length $2s_{n}$. We call this probability measure $\mathbb{P}_{D,s_{n}}$. Let us fix a level $l$ and consider the paths which are such that the path returns to the level $l$ most $N_{s_{n}}(l)$ number of times in any chunk of the path before falling down from the level $l$. Then there is a sufficiently large constant $C$ such that $\mathbb{P}_{D,s_{n}}\left[ \max_{0\le l \le s_{n}} N(l) > C\log(s_{n})\right]\to 0$ as $n \to \infty$.
\end{lemma}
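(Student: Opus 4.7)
My plan is to exploit the independence of the Dyck paths under $\mathbb{P}_{D,k,m}$ to reduce the stochastic domination to a single-path tail bound, which I then establish by a direct combinatorial count of Dyck paths having a long first chunk above level $q$. Under $\mathbb{P}_{D,k,m}$ the $m$ Dyck paths are independent with each marginal uniform on Dyck paths of its length; the joint stochastic domination by $\tau$ i.i.d.\ copies of $X$ therefore follows, via coordinate-wise monotone coupling of independent marginals, from a uniform marginal bound
\begin{equation*}
\mathbb{P}\bigl[N_{k'}(q)\ge l\bigr]\;\le\;\frac{k^{3/2}}{2^l}\qquad\text{for every }k'\le k\text{ and every level }q.
\end{equation*}

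For the single-path bound I decompose any length-$2k'$ Dyck path that visits $q$ uniquely as $\mathcal{P}_{\mathrm{pre}}\cdot\mathrm{Chunk}\cdot d\cdot\mathcal{P}_{\mathrm{suf}}$, where $\mathcal{P}_{\mathrm{pre}}$ is the first-passage from $0$ to $q$ staying $\ge 0$ (of length $2j_0+q$), $\mathrm{Chunk}$ is the first maximal segment staying weakly above $q$ (a shifted Dyck path of length $2c$), $d$ is the single down-step $q\to q-1$, and $\mathcal{P}_{\mathrm{suf}}$ is the remainder (of length $2j_f+q-1$, staying $\ge 0$), subject to $j_0+c+j_f=k'-q$. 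The count $N_{k'}(q)$ of returns to $q$ in the first above-$q$ chunk equals one plus the number of above-$q$ excursions inside $\mathrm{Chunk}$, so $\{N_{k'}(q)\ge l\}$ is equivalent to $\mathrm{Chunk}$ containing at least $l-1$ non-empty excursions.

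By the explicit formula in \eqref{def:mthcatalan}, the number of length-$2c$ Dyck paths with exactly $e$ excursions equals $\frac{e}{2c-e}\binom{2c-e}{c-e}$. Telescoping $\binom{2c-e}{c-e}=\binom{2c}{c}\prod_{j=0}^{e-1}\frac{c-j}{2c-j}$ and using $\frac{c-j}{2c-j}\le\tfrac12$ gives $\binom{2c-e}{c-e}\le (c+1)\,C_c\,2^{-e}$; summing over $e\ge l-1$ yields at most $4(c+1)\,C_c\,2^{-l}$ chunks with $\ge l-1$ excursions. Plugging this into the decomposition and bounding $(c+1)\le k+1$, the count of length-$2k'$ Dyck paths with $N_{k'}(q)\ge l$ is at most $\frac{4(k+1)}{2^l}\sum_{j_0+c+j_f=k'-q}F_q^{\ge 0}(j_0)\,C_c\,G_{q-1}(j_f)$, where $F_q^{\ge 0}$ and $G_{q-1}$ are the standard reflection-principle counts of first-passage prefixes and half-line suffix paths. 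The remaining sum merely enumerates length-$2k'$ Dyck paths visiting $q$, hence is $\le C_{k'}$. Dividing by $C_{k'}$ gives $\mathbb{P}[N_{k'}(q)\ge l]\le 4(k+1)/2^l$, which after absorbing the constant (and being trivial whenever $k^{3/2}/2^l\ge 1$) is bounded by $k^{3/2}/2^l$.

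The crux of the argument is the elementary telescoping inequality $\binom{2c-e}{c-e}\le\binom{2c}{c}\,2^{-e}$; once in hand, the rest of the proof is routine book-keeping. The only conceptual subtlety is the precise interpretation of ``returns before falling down'': if the intended reading were returns summed across all above-$q$ chunks rather than within the first one, a union bound over the (at most polynomial-in-$k$) number of such chunks would contribute only a polynomial factor, still absorbed by the $k^{3/2}$ prefactor.
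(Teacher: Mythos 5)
Your single-path estimate is correct, and it reaches the tail by a genuinely different route than the paper. The paper argues at level $0$ by comparing Dyck paths with simple-random-walk bridges: a bridge with $t$ intermediate returns to $0$ carries $2^{t}$ sign choices for its excursions, of which only the all-nonnegative one is a Dyck path, so Dyck paths with many returns are exponentially rare among bridges, and dividing by the Catalan count ($\gtrsim 4^{k_i}k_i^{-3/2}$) gives the $k^{3/2}2^{-t}$ tail; general levels $q$, and several levels of one path, are then handled by conditioning on the return times to lower levels and iterating (given the return structure, the excursions above a level are again uniform Dyck paths). You instead work at level $q$ in one shot through the prefix--chunk--suffix decomposition, enumerate chunks by their number of returns via the $e$-th power of the Catalan generating function \eqref{def:mthcatalan}, and kill the tail with the telescoping inequality $\binom{2c-e}{c-e}\le\binom{2c}{c}2^{-e}$. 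This avoids the induction over levels and in fact yields the slightly sharper tail $4(k+1)2^{-l}$; the stated $k^{3/2}2^{-l}$ then holds verbatim for all $k$ beyond a small absolute threshold, and for the finitely many small $k$ the discrepancy is a harmless constant, irrelevant to the asymptotic use of the lemma.

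Two caveats about your reduction to marginals, the second of which is a real gap in scope. First, in the paper's own proof the normalization is $\sum_{k_1+\cdots+k_m=k}\prod_i C_{k_i}$, i.e.\ the lengths are summed over, so the $m$ paths are only conditionally independent given the length vector; since your marginal bound is uniform in $k'\le k$ and in $q$, this is repaired by conditioning on the lengths before coupling, but it needs saying. Second, and more substantively: the paper's proof explicitly covers, and its later applications require (e.g.\ the products $\prod_{q}\Delta_{q}^{\cdots}$ over many levels $q$ of the same path in the proof of Proposition \ref{prop:ge3}), the case where several of the $N$'s are returns to \emph{different levels of the same Dyck path}. There the variables are not independent, and your coordinate-wise coupling of independent marginals does not apply. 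Your decomposition does contain the missing ingredient --- conditionally on its length and on everything outside it, the chunk above a level is uniform on Dyck paths of that length, so the conditional tails are uniformly bounded and a sequential coupling gives the i.i.d.\ domination, which is precisely the paper's Step 2 --- but as written your argument stops at the distinct-paths case. Finally, your closing aside is not quite right: if returns were summed over all above-$q$ chunks, a union bound over chunks would control the maximum per-chunk count, not the sum, and would not produce a $2^{-l}$ tail; fortunately the paper's stated convention (returns to a level before falling down) is exactly the per-chunk reading you use, so this does not affect the proof of the statement itself.
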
 
\begin{proof}
The proof of this lemma is some what tricky. We divide it in the following steps:\\
\textbf{Step 1:} We at first consider the levels of the starting points of the Dyck paths. With slight abuse of notation we call any such level as level $0$. Let the corresponding length of the Dyck path be $2k_{i}$ for some $i$ and we look at the returns to level $0$ before falling down in the $i$ th Dyck path. We prove here that $\mathbb{P}_{D,k,m}\left[ N_{k_{i},m}(0) \ge t \right]\le \frac{k_{i}^{\frac{3}{2}}}{2^{t}}$. To prove this result we compare this probability with a similar probability of the simple symmetric random walk of length $2k_{i}$. At this point we at first fix all the other Dyck paths and consider the uniform probability measure of the simple symmetric random walk of length $2k_{i}$. We call this measure $\mathbb{P}_{R,k_{i}}.$ Let $\mathcal{N}_{k_{i}}(t,0)$ be the collection of random walk paths which starts from $0$ and returns to $0$ at time point $2k_{i}$ and in between time points $0$ to $2k_{i}$ it returns to $0$, exactly $t$ many times. Then 
\begin{equation}
\mathbb{P}_{R,k_{i}}\left[ \text{The random walk returns to $0$, $t+1$ times} \cap x(2k_{i})=0 \right]=\frac{\#\mathcal{N}_{k_{i}}(t,0)}{4^{k_{i}}}<1.
\end{equation} 
Now we inspect a typical path in $\mathcal{N}_{k_{i}}(t,0)$. Observe that the random walk starts from $0$ and in the very next step it can go either above or below $0$. Now once the path goes either above $0$ or below $0$ it stays there until it returns to $0$ for the first time. Now in the next step it also goes either $0$ or below $0$ and stays there until it returns to $0$ for the second time. In particular after any return to $0$ there are $2$ possible ways to choose in the very next step determining whether the random walk goes above or below $0$ in the next part of time before returning to $0$. On the other hand, if we consider a Dyck path, it always stays above $0$. Hence the number of Dyck paths which start from $0$ and returns to $0$ exactly $t+1$ number of times is given by $\frac{1}{2^{t}}\#\mathcal{N}_{k_{i}}(t,0)$. As a consequence if we consider the uniform measure over all Dyck paths of length  exactly $2k_{i}$ (call it $\mathbb{P}_{k_{i}}$), then under this measure the probability that a Dyck path comes to $0$ for exactly $t$ times is given by:
\begin{equation}
\mathbb{P}_{k_{i}}\left[ \text{The Dyck path comes to $0$ exactly $t+1$ times} \right]\le \frac{\#\mathcal{N}_{k_{i}}(t,0)}{2^{t}C_{2k_{i}}}\le \frac{k_{i}^{\frac{3}{2}}}{2^{t}}\le \frac{k^{\frac{3}{2}}}{2^{t}}. 
\end{equation}
Since the number of returns to $0$ in the $i$ th Dyck path doesn't depend on the other Dyck paths, the number of paths such that the $i$ th Dyck path comes to $0$ exactly $t+1$ many times is bounded by $\frac{k^{\frac{3}{2}}}{2^{t}} \prod_{i} C_{2k_{i}}$. Hence the probability
\begin{equation}
\begin{split}
&\mathbb{P}_{D,k,m}\left[ N_{k_{i},m}(0)=t \right]= \frac{\# \text{ of paths having the required property}}{\sum_{k_{1},\ldots, k_{m}=k} \prod_{i} C_{2k_{i}}}\\
&\le \frac{\frac{k^{\frac{3}{2}}}{2^{t}} \sum_{k_{1},\ldots, k_{m}~|~ \sum_{i}k_{i}=k}\prod_{i} C_{2k_{i}}}{\sum_{k_{1},\ldots, k_{m}~|~ \sum_{i}k_{i}=k} \prod_{i} C_{2k_{i}}}\le \frac{k^{\frac{2}{3}}}{2^{t}}.
\end{split}
\end{equation}
\textbf{Step 2:} In this step we prove the remaining of Lemma \ref{lem:universal}. The proof is done by conditioning and induction on the level. In \textbf{step 1}, we proved an upper bound on the tail of the number of returns to $0$ for the $i$ th Dyck path. Here we show that the conditional distribution of the number of returns to level $1$ given the number of returns to level $0$ follows the same upper bound. Since given the lengths of the Dyck paths, the number of times one path returns to a level is independent of the number of times another different path returns to another level, if $N_{k_{1},m}(q_{1})$ and $N_{k_{2},m}(q_{2})$ belong to different Dyck paths, then the proof can be done by simply looking at the distribution of $N_{k_{1},m}(q_{1}) \left|N_{k_{2},m}(q_{2}) \right.$. On the other hand, if $N_{k_{1},m}(q_{1})$ and $N_{k_{2},m}(q_{2})$ denote the return to two different levels of the same Dyck path, the proof can be completed by the repeated use of the argument we give next.

Now we go by conditioning. In particular we look at $N_{k_{i},m}(1)\left|N_{k_{i},m}(0) \right.$.
To this end we at first fix the value of $N_{k_{i},m}(0)$ to be $t_{1}$ and assume the random walk returns to $0$ at instants $2k_{i,1},2k_{i,1}+2k_{i,2},\ldots 2k_{i,1}+\ldots+2k_{i,t_{1}}$. Since any of such paths return to $0$ exactly at instants $2k_{i,1},2k_{i,1}+2k_{i,2},\ldots, 2k_{i,1}+\ldots+2k_{i,t_{1}}$, the total number of paths of this type is given by $C_{2k_{i,1}-2}\ldots C_{2k_{i,t_{1}}-2}$. Here we have used the fact that the random walk returns to $0$ exactly at instants $2k_{i,1},2k_{i,1}+2k_{i,2},\ldots, 2k_{i,1}+\ldots+2k_{i,t_{1}}$ which implies the random walk goes one step down at instants $2k_{i,1},2k_{i,1}+2k_{i,2},\ldots, 2k_{i,1}+\ldots+2k_{i,t_{1}}$ and goes one step up at instants $1,2k_{i,1}+1, \ldots, 2k_{i,1}+\ldots +2k_{i,t_{1}-1}+1$. This explains the quantity $C_{2k_{i,1}-2}\ldots C_{2k_{i,t_{1}}-2}$. By $N_{k_{i},m}(1)$ we shall denote the number of returns to $1$ in the left most chunk. The arguments for the other chunks is exactly the same. Now fix a path satisfying this property and call it $\omega$. Let the path returns to level $1$, $t_{2}$ times in the left most chunk. Observe that the conditional probability of this path given $N_{k_{i},m}(0)=t_{1}$ and $k_{i,1},\ldots,k_{i,t_{1}}$ is:
\begin{equation}
\begin{split}
&\mathbb{P}_{D,k,m}[\omega \left| N_{k_{i},m}(0)=t_{1} \cap \text{the returns are at $2k_{i,1},\ldots , 2k_{i,1}+\ldots + 2k_{i,t_{1}} $} \cap \right.\\
&~~~~~~~~\left. \text{The length of the Dyck paths are } k_{1},\ldots, k_{m} \right.]= \frac{\frac{1}{C_{2k_{i}}}}{\frac{C_{2k_{i,1}-2}\ldots C_{2k_{i,t_{1}}-2}}{C_{2k_{i}}}}\\
&=\frac{1}{C_{2k_{i,1}-2}C_{2k_{i,2}-2}\ldots C_{2k_{i,t_{1}}-2}}\\
 \Rightarrow & \mathbb{P}_{D,k,m} \left[ N_{k_{i},m}(1)= t_{2}\left| N_{k_{i},m}(0)=t_{1} \cap \text{the returns are at $2k_{i,1},\ldots , 2k_{i,1}+\ldots + 2k_{i,t_{1}} $} \cap \right. \right.\\
& ~~~~~~\left. \text{The length of the Dyck paths are } k_{1},\ldots, k_{m}  \right]\le \frac{k_{i,1}^{\frac{3}{2}}C_{2k_{i,1}-2}C_{2k_{i,1}-2}C_{2k_{i,2}-2}\ldots C_{2k_{i,t_{1}}-2}}{2^{t_{2}}C_{2k_{i,1}-2}C_{2k_{i,1}-2}C_{2k_{i,2}-2}\ldots C_{2k_{i,t_{1}}-2}}\\
&\le \frac{k_{i,1}^{\frac{3}{2}}}{2^{t_{2}}} \le \frac{k^{\frac{3}{2}}}{2^{t_{2}}}
\end{split}
\end{equation} 
Since the r.h.s. does not depend on $t_{1}$ and the values $2k_{i,1},\ldots , 2k_{i,1}+\ldots + 2k_{i,t_{1}} $ and $k_{1},\ldots, k_{m} $, we have the required result.

\end{proof}
We now fix some notations. Firstly observe that in the skeleton word, after the end of each downward chunk there is an upward chunk of length at least $1$. So there will be $N$ such endpoints. We enumerate them from left to right. Along with these points we also add the points which correspond to the first arrival to the level of $N$ endpoints considered here. Let the total number of points in this collection be $\tilde{N}$. Observe that $\tilde{N}\le 2N$. Now we introduce a partition of $\{ 1,2,\ldots, \tilde{N} \}$ in the following way. We place $i$ and $j$ in the same block if the $i$ th and $j$ th endpoint is in the same level and if we look the word from left to right it does not fall below the level of the $i$ th endpoint in between the $i$ th and $j$ th endpoint. Although, the total number of partitions of $\{ 1,\ldots ,\tilde{N} \}$ is quite large, we shall see the number of feasible partition is only of exponential order. One can see that this partition is a non-crossing partition of $\{1,\ldots , \tilde{N} \}$. It is a well known fact that there is a bijection from the set of all non-crossing partition of $\{1,\ldots , \tilde{N}  \}$ to the Dyck paths of length $2\tilde{N}$. As a consequence the total number of such partitions are bounded by $16^{N}$. The following argument explicitly constructs this map. Observe that the interpretation of this map will be useful to us to bound certain quantities. 
\begin{lemma}\label{lem:noncrossing}
For any $\tilde{N}$ let $NC(\tilde{N})$ denotes the set of non-crossing partitions of $\{1,\ldots , \tilde{N} \}$. Then there is a bijection from $NC(\tilde{N})$ to the all set of all Dyck paths of length $2\tilde{N}$.
\end{lemma}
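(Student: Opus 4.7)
The plan is to construct an explicit bijection $\phi : NC(\tilde N) \to \mathcal{D}(2\tilde N)$, where $\mathcal{D}(2\tilde N)$ denotes the set of Dyck paths of length $2\tilde N$. Given $\pi \in NC(\tilde N)$, I would process the integers $1, 2, \ldots, \tilde N$ in order and emit a word in the alphabet $\{U, D\}$ as follows: for each $i$, first emit one up-step $U$, and then, if $i$ happens to equal the maximum of its block $B \in \pi$, immediately emit $|B|$ consecutive down-steps $D$. The resulting word has $\tilde N$ up-steps and $\sum_{B \in \pi} |B| = \tilde N$ down-steps, hence total length $2\tilde N$, and returns to zero at the end.

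Next I would verify that $\phi(\pi)$ stays non-negative and is therefore a genuine Dyck path. After the emissions associated to $i$ are completed, the height equals $i - \sum_{B : \max B \le i}|B|$, which counts the elements of $\{1,\ldots,i\}$ lying in blocks not yet fully exhausted, and so is $\ge 0$. The intermediate heights reached inside the down-step run that follows the $U$ at $i = \max B$ are controlled by the same inequality applied at step $i-1$ together with the fact that the elements of $B$ which are still on the stack were never counted among $\sum_{B':\max B'\le i-1}|B'|$. Together with the correct length and step counts, this places $\phi(\pi)$ in $\mathcal{D}(2\tilde N)$.

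For the inverse $\psi : \mathcal{D}(2\tilde N) \to NC(\tilde N)$, I would scan a Dyck path from left to right while maintaining a stack. Reading the $i$-th up-step pushes the label $i$; reading a maximal run of $m$ consecutive down-steps pops the top $m$ labels and groups them into a single block of the output partition, the first-popped label being declared the maximum of that block. The Dyck condition guarantees the stack is non-empty whenever a $D$ is encountered, so $\psi$ is well-defined. Non-crossingness of $\psi(D)$ follows from the LIFO discipline of the stack: two distinct blocks produced by $\psi$ either have disjoint index ranges or one is nested strictly inside the other, and in either case they cannot cross.

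Finally I would check $\psi \circ \phi = \mathrm{id}_{NC(\tilde N)}$ and $\phi \circ \psi = \mathrm{id}_{\mathcal{D}(2\tilde N)}$. The relation $\phi \circ \psi = \mathrm{id}$ is immediate from the emission rule applied to the partition reconstructed by the stack, since the positions and lengths of the $D$-runs are preserved. For $\psi \circ \phi(\pi) = \pi$, one has to observe that when $\phi$ emits the $|B|$ down-steps at $i = \max B$, the top $|B|$ entries of the stack are precisely the elements of $B$ in decreasing order; this is exactly the step that uses non-crossingness, because otherwise some element of a foreign block could lie sandwiched between elements of $B$ on the stack. This single observation is the main conceptual content of the argument; I do not expect a serious obstacle, as the remainder is pure bookkeeping.
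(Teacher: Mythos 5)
Your proof is correct, but it is a genuinely different bijection from the one the paper constructs. You use the classical stack bijection: element $i$ emits an up-step, and the maximum of each block emits a run of down-steps of length equal to the block size, with the inverse recovering blocks by popping maximal down-runs; non-crossingness enters exactly where you say, in checking that the top $|B|$ stack entries at $\max B$ are the elements of $B$ (and note your worry about intermediate heights inside a down-run is automatic, since a decreasing run attains its minimum at its end). The paper instead builds the path level by level: the block containing $1$ is identified with the starting point together with all returns to level $0$, each block nested one level higher is identified with the first arrival to that level plus the subsequent returns to it before the path falls below, every marked point carries one up-step, and the down-steps are then filled in. Both maps prove the lemma, since only the existence of a bijection (hence the bound $\#NC(\tilde N)\le 4^{\tilde N}$) is needed for the statement itself; the difference matters downstream, however, because the paper's partition of the marked points on the skeleton word is defined precisely by ``same level and no dip below that level in between,'' and Algorithm \ref{alg:simplewig!} and the treatment of the return counts $\Delta_q$ read off the filled-up path directly from the paper's level-based correspondence. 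Your bijection would not transport that structural information (for instance, on the paper's example partition $\{1\},\{2,6,7\},\{3,5\},\{4\},\{8\}$ the two constructions produce different Dyck paths), so it proves the lemma but could not be substituted into the later arguments without reworking them.
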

\begin{proof}
We start with a non crossing partition of $\tilde{N}$ and look at the block containing $1$. The point $1$ corresponds to the starting point of the Dyck path and all the other entries correspond to the returns to $0$. Observe that here we are not counting the endpoint of the Dyck path which will be specified from the other parameters. From all these points we place an upward edge from left to right. Now we look at the blocks which are just one step above to the current block. Here the first entries of each block denote the entry to the level $1$ and all the other entries denote the returns to the level $1$ before falling down. We place these points at level $1$ and put an upward edge from left to right from these points. We continue in this fashion until we run out of choices. Observe that by performing this procedure, we have specified all the upward edges in the Dyck path. So we fill up the remaining downward edges to to get a Dyck path. Since from each point we construct exactly one upward edge, the length of the Dyck path is $2\tilde{N}$.  This way given a non-crossing partition we created a Dyck path. On the other hand given a Dyck path the non-crossing partition is obvious.
\end{proof}
\noindent
Here we give an example of a non-crossing partition and the corresponding Dyck path. Let the non-crossing partition be $\{ 1 \}, \{ 2,6,7 \}, \{ 3,5 \}, \{ 4 \},\{ 8 \}$. The corresponding Dyck path is given in the following figure:
\begin{figure}[H]
        \begin{center}
                \includegraphics[width=0.8\textwidth]{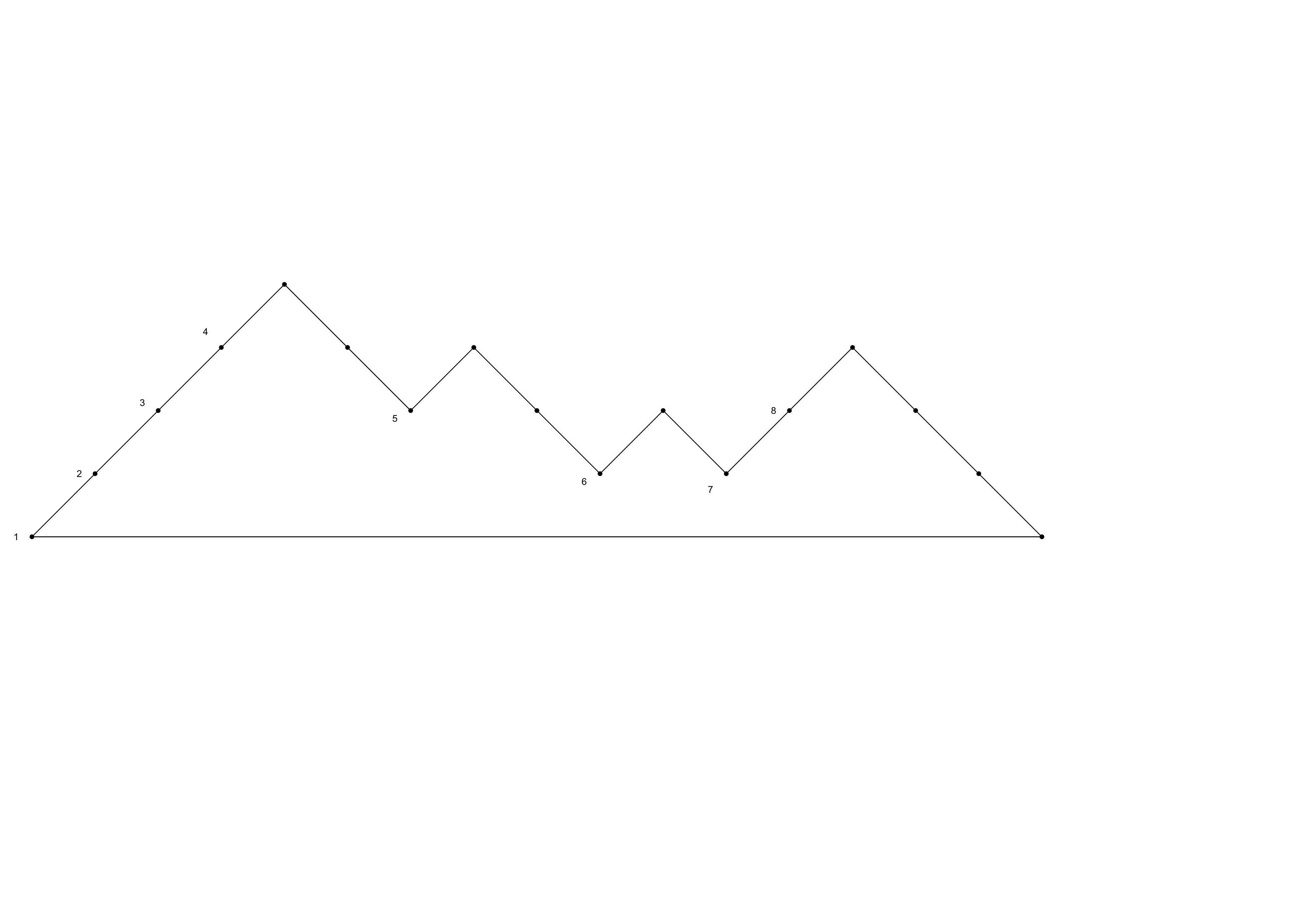}
        ~ %add desired spacing between images, e. g. ~, \quad, \qquad etc.
          %(or a blank line to force the subfigure onto a new line)
      \end{center}   
 \end{figure}

%To these end we map such partitions to Dyck path of length $2\tilde{N}$ in the following way. First of all we look for the positions which are in level $0$. We mark them. Say this points are $i_{0,1}<i_{0,2}<\ldots i_{0,k_{0}} $. Now we look at the level $1$. Observe that given $i_{0,1},\ldots, i_{0,k_{0}}$, there will be $k_{0}$ distinct blocks. In the $k$ th block the indexes denote the positions of return to level $1$ in between $k-1$ and $k$ th return $0$. This procedure continues similarly for level $2$, $3$ and so on. By defining the blocks in this way we are actually fixing the positions of returns to each level of Dyck path of length $2N$. As a consequence the total number of such partitions are bounded by $4^{N}$.

\noindent
Observe that the Dyck path formed in the process will have exactly $\tilde{N}$ points marked on it. From these points we choose the points which corresponds to the start of an upward chunk in the filled up path. A naive upper bound to the choice is $2^{\tilde{N}}\le 4^{N}$.

\noindent 
 %Now to the last entry of each block we assign a pointer taking value $0$ or $1$. 
%Here $0$ means that there is no unmatched upward edge starting from the corresponding level immediately after the last arrival to the level. On the other hand the value $1$ means the other way round. 
In our refined count of the skeleton word we shall fix one such partition and shall call it $\mathcal{P}$. 

\noindent
Now given this partition $\mathcal{P}$, and the parameters $p'_{i}$'s(to be specified), we at first create a path of length $2m$. This path is the path obtained by filling up the gaps in the skeleton path.

\noindent
So in the next part we give two algorithms (Algorithm \ref{alg:simplewig!} and Algorithm \ref{Alg:skeleton2}): at first we give an algorithm to construct the filled up path from the parameters $p'_{i}$'s and then we give an algorithm to form the skeleton path from this filled up path after fixing the additional parameters. 
\begin{algorithm}\label{alg:simplewig!}
Given a non-crossing partition $\mathcal{P}$, we fix the Dyck path ($\mathcal{D}$) according to Lemma \ref{lem:noncrossing}. This algorithm consists of the following steps:\\
\textbf{Step 1}
We look at the left most upward section of the path $\mathcal{D}$. Let us assume that in this section there are $\kappa$ many points which indicates the start of an upward chunk. In order to form the filled up word we at first move $p'_{1},p'_{2},\ldots,p'_{\kappa}$ step upward. This makes the length of the left most upward segment of the filled up path $\left(p'_{1}+\ldots + p'_{\kappa}\right)$.\\
\textbf{Step 2}
Now we go $p'_{\kappa+1}$ steps downward to reach a level. This level corresponds to the first level where the Dyck path $\mathcal{D}$ returns more than once after the first upward chunk. 
\\
\textbf{Step 3}
Next the Dyck path $\mathcal{D}$ has to move in the upward direction. Now we consider two cases as follows:
\begin{enumerate}
\item The next return to this level is the final return to the level before falling down.
\item The walk returns to this level more than once after the current return.
\end{enumerate}
Now we use the parameter $p'_{\kappa+2}$. In both the cases we move $p'_{\kappa+2}$ step up to reach a level. However for case $2$ we need to ensure the return to this level. Hence we need to fix a parameter depending on the future choices of the parameters.\\
\textbf{Step 4:} Now we arrive at a new level. There can be two further cases from here. 
\begin{enumerate}[i]
\item This level is the starting point of new upward chunk.
\item The next edge from this level is a downward edge. 
\end{enumerate}
For case $i$ we go back to \textbf{Step 1} with parameter $p'_{\kappa+3}$.

\noindent 
For case $ii$, if the next level in the path $\mathcal{D}$ in the downward direction is not a level previously explored, then we go $p'_{\kappa+3}$ step down and move to \textbf{Step 3}. 

\noindent 
Further if the next level in the path $\mathcal{D}$ in the downward direction is a level previously explored, we go down by a pre-specified number which denotes the difference between the current level and the next level. This explain the case 2. in \textbf{Step 3.}. We now again go to \textbf{Step 3}.

%
%
%In case $1.$ we go up by $p'_{\kappa+2}$ step up. In case $2.$ we go up by $\frac{p'_{\kappa+2}}{2}$ step and also we associate a downward chunk of length $\frac{p'_{\kappa+2}}{2}$ to ensure the return to the current level.
%
% Now we use the parameter $p'_{\kappa+2}$. In this case we move the filled up path in the upward direction for  $\frac{p'_{3}}{2}$ steps. Along with this upward chunk of length  $\frac{p'_{3}}{2}$ we also construct a downward chunk of length $\frac{p'_{3}}{2}$ to ensure the next return to this level.
%In case $1$, we go up $\frac{p'_{3}}{2}$ times and go down $\frac{p'_{3}}{2}$ times to come to the specific level. Then it goes up and down$\frac{p'_{4}}{2}$ times and do this for $\tau$ times. Next it goes to step 2 with parameter $p'_{\tau+3}$. 
%
%\noindent 
%In case $2$ we go up $p'_{3}$ many steps and after that we go to step $2$ with parameter $p'_{4}$.
\end{algorithm}

\noindent
\textbf{(ii) Strategies to construct the skeleton word given the filled up path and reduction of parameters:}
Before moving forward we make an important observation. Firstly observe that from Algorithm \ref{alg:simplewig!} we get a word with several points on it specified. We call these points marked points. We shall see later that these points denote the starting points of upward segments in the skeleton word. 

\noindent 
We now provide an algorithm to construct the skeleton word.
\begin{algorithm}\label{Alg:skeleton2}
The algorithm consists of the following steps.
\\
\textbf{Step 1} At first we follow the left most upward segment of the filled up path. This is the first upward segment of the skeleton path. Now at this point we use the value of $r_{i}$'s and the permutation of the non ignored vertices to get the location of the initial point of the first type $j\ge 2$ instant.\\
\textbf{Step 2} At this point there is at most three choices to continue the word:
\begin{enumerate}[(a)]
\item It can create an upward edge.
\item It can close the immediately traversed edge.
\item It can close one of the other two edges incident to the vertex of consideration. 
\end{enumerate}
In case (a) we move to \textbf{Step 1} and continue until we encounter the second type $j \ge 2$ instant.\\  
\textbf{Step 3} In cases (b) and (c) we start closing edges obeying the movement choices. We continue this until we encounter an instant where the movement choice directs the walk to go up. Now there can be two cases at this point. Firstly we can encounter an instant of moving upward while closing an edge left to right. Here we take follow the upward segment immediate right to the current level. Secondly we can encounter an instant of moving upward while closing an edge right to left. In this case we follow the upward segment starting from the current level which is the next available one from left to right.  After completion of \textbf{Step 3} we move to \textbf{Step 1} and move upwards until we encounter the next marked instant.
\end{algorithm}

\noindent
\textbf{(iii) Ignored and non-ignored instants:}
%Recall that we discussed about two kinds of repetitions of edges. We shall address these one by one separately.
%We have mentioned earlier that there can be two kinds of repetitions. Firstly, there can be type $j\ge 2$ instants which correspond to repetitions of edges and use the return to a level having the label of an endpoint of the edge. These type $j \ge 2$ instants require the skeleton word to return to a certain level at least twice. Here we construct the type $j\ge 2$ instant by going one step up from the endpoint of the corresponding downward chunk. Secondly, there can be type $j \ge 2$ instants which correspond to a repetition of edge but do not use the return to a level having the label of an endpoint of the edge. %This also requires the vertex corresponding to this type $j\ge 2$ instant to appear immediately after at least one arrival of the walk to the level of the endpoint of downward chunk before falling down.
\noindent
Our goal is to give an upper bound to the number of skeleton words. Hence we fix sufficient parameters (including the filled up words) which specify the skeleton word uniquely. 

\noindent
First of all instead of taking any arbitrary permutation of the type $j\ge 2$ instants we at first take a permutation of a subclass of type $j \ge 2$ instants. These instants will be called non-ignored instants. All the other instants will be called ignored instants. Our strategy is to at first take a permutation of the non-ignored instants and given this permutation we give an upper bound on the number of all permutations of the type $j \ge 2$ instants satisfying certain properties.  Which instants we take for the initial permutation depends on the other parameters in somewhat complicated way. Further when we want to use some properties of the permutation depending on the constraints, we shall assume that the properties hold for all the permutation. Mathematically this is formalized in the following way: Suppose we have a set $\mathcal{C}$ and we consider elements $x\in \mathcal{C}$ having some property $\varrho$. We consider the indicator function $\mathbb{I}_{\varrho}(x)$ to denote whether $x$ has property $\varrho$ or not. Let $f$ be a positive function of the properties. Then $\sum_{x \in \mathcal{C}} f(\varrho)\mathbb{I}_{\varrho}(x) \le f(\varrho) \# \mathcal{C}$ . 

\noindent
To implement this, we consider a skeleton word and look at the type $j \ge 2$ instants. We now tag some of the type $j$ instants as ignored and the others as non-ignored depending upon the imposed properties we shall discuss next.

\noindent
Given a skeleton word following is the strategy to classify the type $j\ge 2$ instants as non-ignored and ignored:
\begin{enumerate}
\item Firstly, if any type $j \ge 2$ instant is an endpoint of the first traversal of an edge, then tag it as non-ignored.
\item Now we look at a given level and the returns to this level. In this level we look at the repetitions of the first kind. We mark the instants just one step above the current level corresponding to the repetitions as ignored. Here we follow the strategy that when ever an ignored and non-ignored instant merge we mark that instant as ignored.
\item Finally for each repetitions of second kind we mark exactly one instant appearing as an endpoint of that edge as ignored. Note that the other instant might be marked as ignored as well while considering some other edge. Which one will be ignored will be described elaborately how we handle edges traversed odd number of times. Further, if an ignored and a non-ignored instant merge we mark that instant as ignored.  
\end{enumerate}
%For this purpose we additionally fix the values of $r_{i}$'s which denotes the initial locations of the type $j>2$ vertices.
\textbf{(iv) Counting the number of positions of the ignored instants}
Now we consider a fixed level $q$. By the filled up path we know the positions of returns to this level before falling down. Let there be $\Delta_{q}$ such indexes $ i_{1}< \ldots < i_{\Delta_{q}}$. We shall make the ignored instants such that apart from the first appearance of an edge all the instants appearing immediately after a return to the level $q$ are ignored instants.   

 %Given these suppose now a vertex corresponding to a type $j\ge 2$ instant comes $\kappa$ times just immediately after the positions of return to the level $l$. We shall ignore all such positions apart from the first. We also shall fix a permutations of the non ignored vertices. Fixing this permutation of the non ignored vertices we come to the choice of the ignored vertices.
 
%So we fix a level and look at the positions of returns to this level before falling down. Let these positions be $ i_{1}< \ldots < i_{\zeta}$. Suppose immediately after the returns to the level $l$, the $i$ th type $j$ instant appears $\eta_{}$
%Obviously $\sum_{i=1}^{\omega}\eta_{i}=\zeta$. Then fixing all the other parameters the number of choices for the ignored vertices is bounded by 

%Now let there be $\omega$ different type $j\ge 2$ instants repeated at the top of the upward segment starting immediately after these indexes. Let $\eta_{1},\ldots, \eta_{\omega}$ be the number of times they appear respectively.
 
%\begin{equation}\label{boundignored}
%\frac{\zeta^{\sum_{i}\eta_{i}}}{ \prod_{i=1}^{\omega} \eta_{i}!}.
%\end{equation}
Now for a given level $q$, suppose the filled up word comes to the level $\Delta_{q}$ times and among these $\Delta_{q}$ returns say the $\eta_{i,j,q}$ be the number of times the $i$ th vertex of type $j$ appears immediately after a return to $q$. Then the number of choices for such cases are given by 
\begin{equation}
\begin{split}
\prod_{q} \frac{\Delta_{q}^{\sum_{j,i}\eta_{i,j,q} }}{\prod_{j}\prod_{i}\eta_{i,j,q}!}
\end{split}
\end{equation}

%We now fix certain choices which we call the movement choices. These choices determined dynamically while forming the skeleton word. A choice of such kind determines what the skeleton word should do in the next step. Firstly at every instant it determines whether the word should create a new edge or close an existing edge immediately after an instant.  If at the next instant the word  closes an edge it determines exactly which edge should it close when there are multiple choices for the closing edge. One should also notice that if the word comes to a level for which the corresponding vertex appear only in the current level, then there is a unique movement choice for that instant.%(****** Will explain this later *****)

%We now fix $\mathcal{P}(N)$ and the parameters $p'_{i},q'_{i}$ and $r_{i}$'s. 

\noindent 
\noindent 
Before moving further, we now count the number of free parameters in the filled up word. We actually also take into consideration the fact that whenever we have a type $j\ge 2$ instant corresponding to repetitions of edges that uses an endpoint of at least one previous appearance of the edge, the corresponding value of $p_{i}$ is just $1$. By specifying the filled up path we have specified the starting points of each upward segment. So without any further constraints there are $2N-1$ free parameters. However multiple returns to a level before falling down decreases the number of free parameters. First of all we have seen that suppose the filled up word comes to a level $q$, $\Delta_{q}$ times, then there is a decrease of $\Delta_{q}$ free parameters. %This decreases the free parameters by $\Delta_{q}$.
 Now among these free parameters let there $\delta$ many positions corresponding to the ignored vertices. Then the corresponding values of $p_{i}$'s are just $1$. %If the non-ignored vertex is the first appearance of that vertex then the number of decrease in free parameters is same as the number of ignored vertices. Otherwise the decrease in free parameters is even smaller.
In other words the total decrease in free parameters are greater than or equal to $\Delta_{q} + \delta \ge 2\delta$.

Now along with the repetitions of discussed type, there can be another type of repetitions. Here the repeated edge does not use the level of an endpoint of at least one previous appearance of the edge. %We at first choose the permutation of the non-ignored instants in such a way that it covers first traversal of all the edges in the skeleton word. Now we consider the each repeated edge one by one. 
For every edge traversed more than thrice we shall consider only one of its endpoint as an ignored instant at that step and we shall look for a position to place the endpoint. Special care needs to be taken for edges traversed odd number of times where we also need to specify which endpoint of the edge we are considering as an ignored instant. We shall discuss this later in details.

 %We now provide a counting strategy to enumerate these words. 
%Firstly we count the number of edges repeated in such way. We look at the edges from left to right. Observe that here we are dealing with repeated edges which come as a collection of disjoint single edges. In particular any endpoint of such edges will not come to that level more than twice. All other cases have discussed previously. 
We at first fix a permutation of the non-ignored instants. Now we look at the repeated edges one by one and start filling up the ignored instants. 
We assume that for $i$ th type $j$ vertex there are $\eta_{i',j', i,j}$ many instants of the $i'$ th type $j'$ vertex appearing as ignored instants corresponding to the repetitions of the second kind. This gives us a choice of 
\begin{equation}
\begin{split}
\prod_{j}\prod_{i=1}^{N_{j}} \frac{j^{\zeta_{i,j}}}{\prod_{j'}\prod_{i'} \eta'_{i,j,i',j'}!}.
\end{split}
\end{equation}
Here $\zeta_{i,j}=\sum_{j'}\sum_{i'} \eta'_{i,j,i',j'}$.

Now we come to the reduction of free parameters due to this kind of repetitions. Suppose there are total $\delta'$ ignored instants, then the reduction of parameter is $2\delta'$. This is due to the fact that the corresponding lengths of upward chunks are $1$ and the downward chunks are $0$. In particular here also for each such ignored instants we fix two parameters for each such ignored instants.

\noindent 
\textbf{(v) The type $j$ instants where all the instants are ignored:} One need to consider the case when we have a vertex but all its appearances apart from the first appearance come as ignored instants slightly differently. We consider the cases for the skeleton word and Dyck paths separately. Here we deal with the skeleton word part. The Dyck paths will be dealt while we prove the bound for the Dyck paths.

 Firstly we consider the skeleton word. Here the ignored instants can correspond to either an edge repetition of first or second type. For the first type the word comes to specific level multiple number of times. Hence the instant just before the ignored instant is a beginning of an upward chunk. There are at most $N$ of them. Also the instant where the vertex of consideration appears for the first time will be within one step of an appearance of one of the verteices corresponding to the instants to which the vertex in consideration will be placed as an ignored instant. The same thing can be said for the repetitions of the second kind. Let there be $\psi$ many such type two instants. Now the possible positions of the first appearances of these instant will be a subset of size $\psi$ of a set with cardinality at most $4N$. So these positions can be chosen in at most $2^{4N}$ ways.
 
%For the Dyck paths we simply bound the total number of choices for the first position of the vertices by $k^{\psi}$ whenever the vertices are adjacent to a type $1$ vertex. However if the first positions are adjacent to type $j$ vertices, like before we bound it by $2^{4\Gamma}$ by following the arguments just given above.\\
\textbf{(vi) Reduction of count due to repetitions of edges:}
Next we need an argument before we state a bound for the number of skeleton words keeping the repetitions of edges in mind. This argument is for the number of choices while closing the edges corresponding to the repetitions. Here we fix an edge $e$ with endpoints $\{ e_{1}, e_{2} \}$. Suppose it is repeated $r_{e}$ times. Then the edge $e$ is traversed $\left[\frac{r_{e}}{2}\right]$ times in the down ward direction. Among these downward traversals let $r_{e,e_{1}}$ denotes the number of times the edge is closed in the order $(e_{1},e_{2})$ and $r_{e,e_{2}}$ be the number of times the edge is closed in the order $(e_{2},e_{1})$. Clearly $r_{e,e_{1}}+ r_{e,e_{2}}= \left[\frac{r_{e}}{2}\right]$. Now we recall the construction of the skeleton word. While closing edges we considered all possible choices to close an edge. This gives rise to the $(3j)^{j}$ factor. However all these choices might not lead to a feasible word. For example when we consider the edge $e$ which repeated $r_{e}$ times, these downward traversals are ordered. In particular the first downward traversal closes the first upward traversal, the second downward closes the second upward traversal and so on. However this ordering is not preserved while we calculate all the possible choices for closing the edges. This reduces the word count. In particular, every feasible word can be mapped to $\prod_{e} r_{e,e_{1}}! r_{e,e_{2}}!$ many infeasible words in the following way. We start the skeleton word from the starting point and we continue until we hit the first instant where an edge is traversed multiple times is closed. Say this edge is $e$ and we close the edge in the order $(e_{1},e_{2})$. Now we choose any one from the $r_{e,e_{1}}$ closings of the edge and close that edge. Next we continue until hit the next such instant and so on. Observe that traversals in these fashion may not give the complete word since we can run out of choices to continue before all edges are traversed. In this case we start with the left most edge which is not traversed and there is no other edge which comes immediately before that edge if we include that edge in any word. Although these words are not feasible but we have counted them. Now $r_{e,e_{1}}! r_{e,e_{2}}! \ge \frac{1}{2}^{\left[\frac{r_{e}}{2}\right]} \left[\frac{r_{e}}{2}\right]!$. We get the actual feasible word count is less than $2^{N} \prod_{e} \frac{1}{\left[\frac{r_{e}}{2}\right]!}$ times the count we introduced in \eqref{countskeleton}.  

\noindent 
\textbf{(vii)Handling the edges traversed odd number of times:}
We find the argument in this part to be the most difficult. We break it into several steps.

\noindent
\textbf{Step 1.(Choices for closing an edge)} We start with a basic but fundamental observation. To begin with we fix the filled up path. Suppose an edge is traversed multiple number of times in the skeleton word, then it can be traversed odd number of time only under the following condition. If we look at the filled up word, then in the skeleton word all but the final exploration of the edge is covered. If this is not the case, then the given filled up path has to be modified. Since both the filled up word and the skeleton word are closed and the skeleton word is obtained by removing some edges from the exploration of the filled up word the removed word is a collection of closed words. Further as we just discussed, no removed edge can be traversed more than once, the removed edges are disjoint collection of cycles. Hence number of edges incident to each vertex is even. Now we look at a vertex. Say this vertex is of type $j\ge 2$ for some $j$ and there are $2\delta$ many removed edges incident to that vertex. By looking at the argument where we bounded the number of choices for closing an edge for each vertex, we see that in this case the number of choices for closing the edges for this vertex is bounded by $(3j)^{j-\delta}$ instead of $(3j)^{j}$.  

\noindent
\textbf{Step 2. (A bound to the number of edges repeated odd number of times specific to the return to a level):}Now we prove a few important facts about edges traversed odd number of times. We consider a level $q$ and assume that the filled up path comes $\Delta_{q}$ number of times before falling down. In this case we claim that there can be at most two instants among the $\Delta_{q}$ returns to the level $q$, such that immediate upward edge after a return to level $q$ is traversed but it is never closed. Further this position is uniquely determined by the movement choices. This is due to the following observation. In the skeleton word suppose we look at the returns to the level $q$ before falling down. Then among these returns there can be only one return after closing an edge from right to left. All the other returns will be by closing an edge left to right. Now in order to create an upward edge from the level $q$ the walk has to return to the level $q$. Further if the return is from left to right the only choice for creating the upward edge is the upward edge is placed right next to the closed edge. As a consequence the only possibility that an immediate upward edge after a return to level $q$ is traversed but it is never closed is the upward edge right before the walk returns to the level $q$ from right to left. However in this case observe that the edge corresponding to the last return to $q$ might never been closed. One might look at the following figure for further insights. 
\begin{figure}[H]
        \begin{center}
                \includegraphics[width=0.5\textwidth]{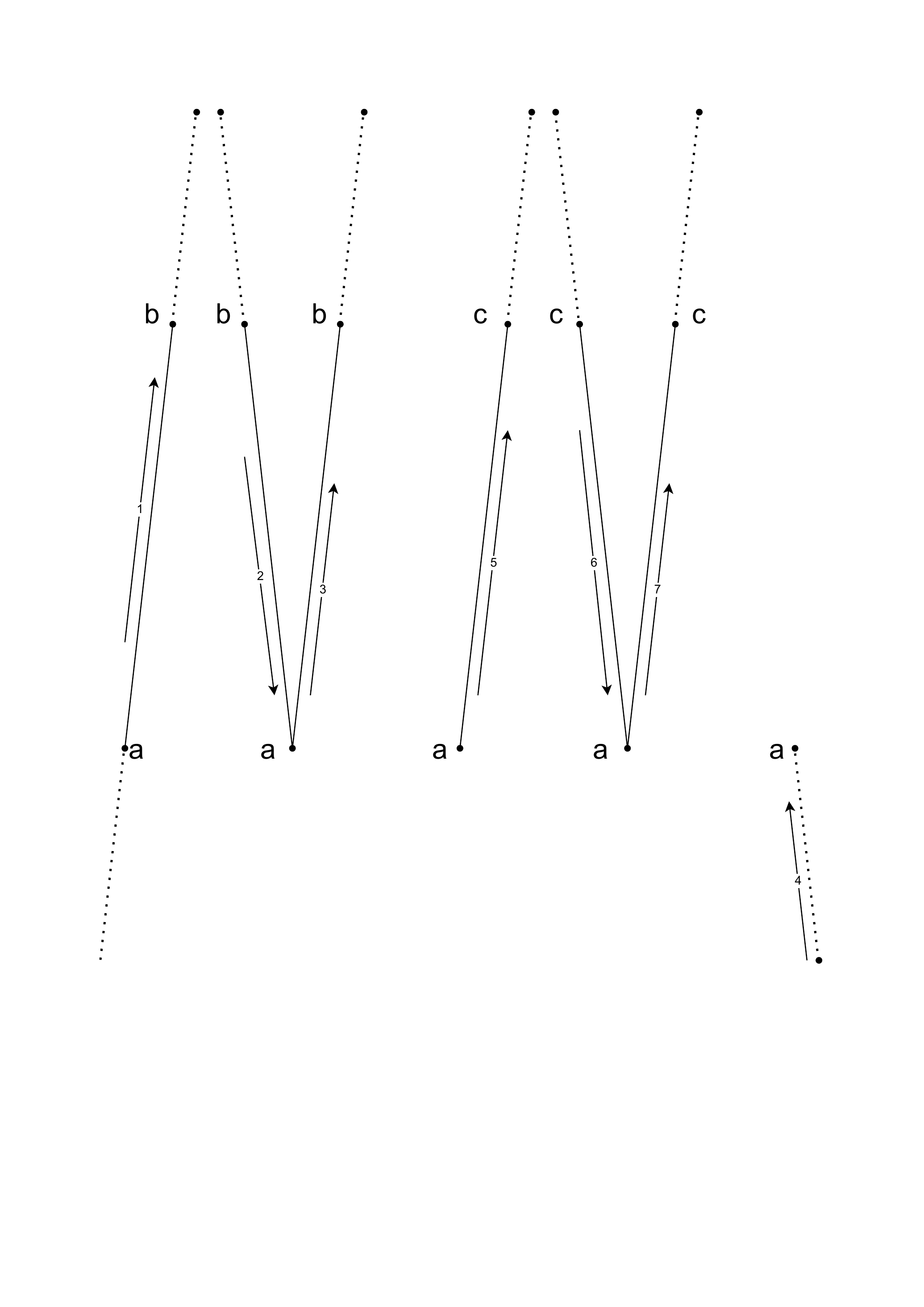}
        ~ %add desired spacing between images, e. g. ~, \quad, \qquad etc.
          %(or a blank line to force the subfigure onto a new line)
      \end{center}   
 \end{figure}
Here the labeled arrows denote the order and the direction along which the edges are traversed.

\noindent
For the shake of the proof we need to consider the edges which are traversed odd number of times, the last repetition is a repetition of first kind and the edge appears exactly once in the corresponding level needs to be dealt a little differently. Observe that to each level there exist at most two such edges. As these edges are repeated exactly once in the corresponding level, these repetitions enjoys all the properties of an edge repetition of second kind. So we consider these edges as repetitions of second kind. Please refer to the following figure.
\begin{figure}[H]
        \begin{center}
                \includegraphics[width=0.5\textwidth]{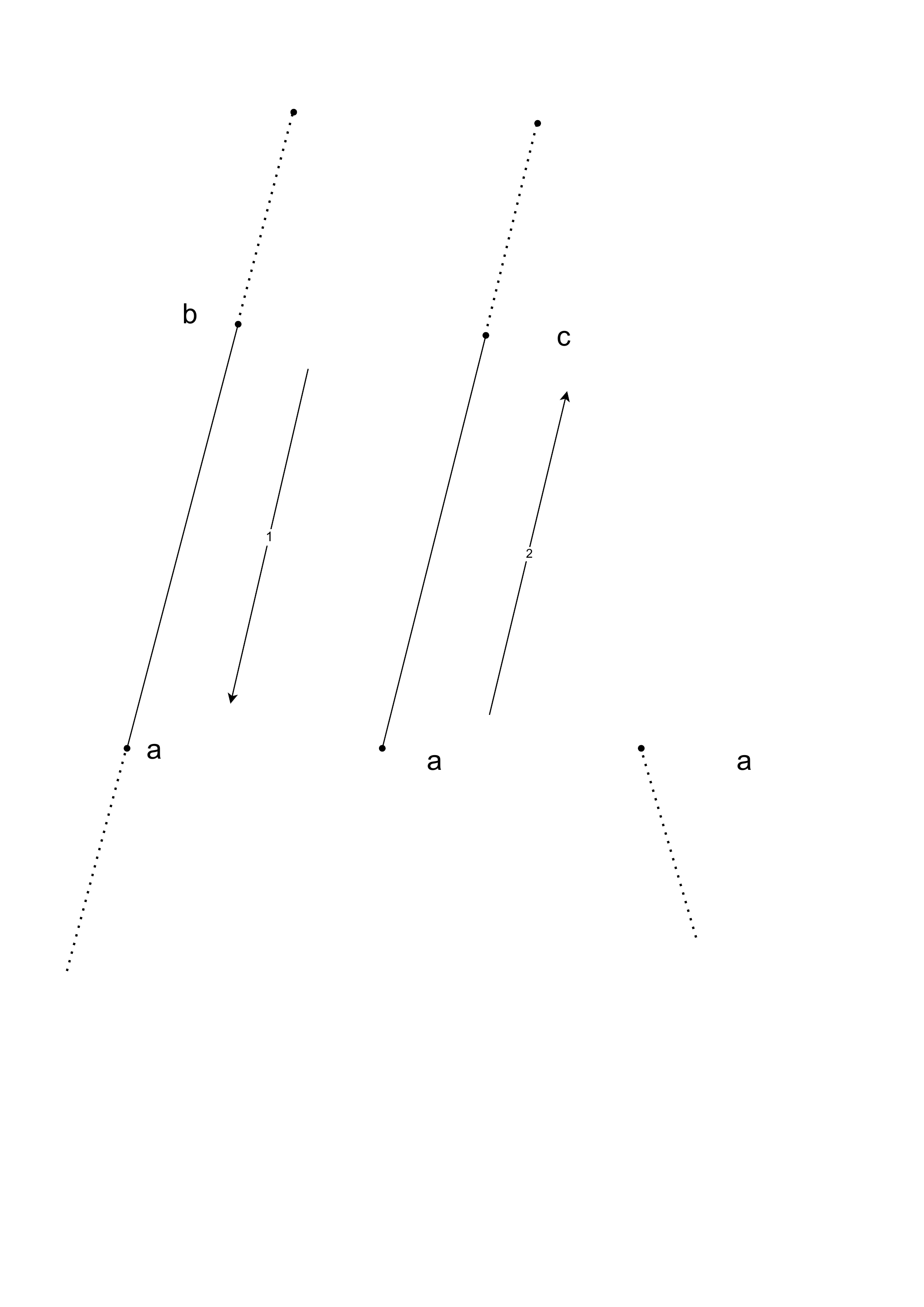}
        ~ %add desired spacing between images, e. g. ~, \quad, \qquad etc.
          %(or a blank line to force the subfigure onto a new line)
      \end{center}   
 \end{figure}
 Here the labeled arrows denote a possible exploration of the Euler circuit.
\noindent
\textbf{Step 3.(A strategy to tag instants corresponding to the repetitions of second kind as ignored and non-ignored)} Now we spend some time on specifying the parameters in counting the skeleton words. By specifying the parameters which determine the skeleton word, we make a function from the space of the parameters to the space of the skeleton words. Now given the filled up word, the permutations of type $j$ vertices, the movement choice and other parameters the skeleton word is uniquely determined. However in general there is no reason for this function to be one to one. This give us some liberty to describe some relationship between the parameters. In particular, we shall impose some constraints on the number of non-ignored vertices. First of all if we look at a given level $l$ and consider the multiple ($\ge 2$) returns of the filled up word to that level, then the vertices just one step above the level which are connected to the level have to be ignored vertices. Now we come to the other type of edge repetitions. Here we have some options. For example we consider the $i$ th vertex of type $j$ and look at the number of edges this vertex appeared as an endpoint of an oddly traversed edge. We have argued that this has to be even. Now among these edges some will be repetitions of the first kind and the others will be repetitions of the second kind. For the first kind we have no choices as we have just discussed. However, for the repetition of the second kind we have some options since the both endpoints of the such edges are symmetric. Here we mark the ignored and non-ignored  vertices in such a way that the difference between the number of times the that vertex appears as a non-ignored vertex and the number of times the vertex appears as an ignored vertex is at most $1$. Given the skeleton word, this is done in the following simple way. For every vertex of type $j\ge 2$ we mark it as ignored or non-ignored. First of all some non-ignored vertices are fixed due to the repetitions of the first kind.
As we know that if we consider the edges traversed odd number of times then to each vertex there are even number of such edges incident to it. As a consequence if we look at the graph corresponding to these edges, they will be a disjoint union of several disjoint Euler circuits. Observe that here we can choose any Euler circuit.
In particular if our edge set is such that a chunk of consecutive edges belong to the edge set of consideration, then we include them in the same Euler circuit and travel the edges consecutively. 
Further if we encounter a situation where we have repetitions of the first kind but the edges of consideration corresponding to that level appears exactly once (these edges are considered as repetitions of second kind now), then we consider the upward chunks containing the edges and corresponding to the level as consecutive and traverse these edges consecutively while considering the Euler circuits.  One might observe that whenever there is two such edges corresponding to a level, the edge through which the word enters the level is closed. So there will be no problem defining the segments as consecutive. %The following figure explains this scenario. *****(Insert figure)*****.

Now we fix one such Euler circuit and consider the traversal of the Euler circuit. We now divide the edges in two parts. Firstly the edges for which the edge coming immediately after is an edge corresponding to repetition of first kind. The other edges are the edges for which the edge coming immediate after is an edge corresponding to repetition of second kind. For the edges corresponding to repetition of the second kind we denote the instant corresponding to the second endpoint of the next edge as an ignored instant. However for edges corresponding to repetition of first kind we do not have any choices for the second endpoint of the next edge. %Following figure explains this strategy. *******(insert figure)******

\noindent 
\textbf{Step 4. (Double assignments) }
Here we consider two things. Firstly here also our tagging strategy can be such that we tag one instant ignored and non-ignored at the same time. Here we mark the instant as ignored. On the other hand there might be cases when one instant is tagged ignored more than once (at most twice). Observe that these scenarios only happen in the case of edges repeated odd number of times and at least one of the assignment comes from a repetition of the second kind where we are forced to tag some vertices as ignored. We have just discussed that whenever we encounter a double tagging such that one of the tagging is ignored and one of the tagging is non-ignored we consider the instant as ignored. On the other hand whenever both the tagging are ignored, there can be two cases: one of the ignored instant corresponds to a repetition of the first kind and both the ignored instants correspond to repetitions of second kind. However when both the ignored instants correspond to repetitions of second kind, we have the edges are consecutive. Hence by our construction of the Euler circuit these cases do not exist. Finally when one of the edge repetition is of the first we ignore the edge and consider the instant as an ignored instant corresponding to the edge repetition of second kind.

 %In this case we shall consider that instant as an ignored instant to the vertex for which it is counted for the first time. In this cases some additional parameters will be fixed. In particular $2$ for every such cases. This will help us to get the required bound. We shall denote $x$ to be the total number of cases of double assignment. Finally the type $j$ non-ignored instants that will be fixed in this way is just a subset of all the type $j$ instants. Hence the cardinality of such cases are bounded by $2^{N}$.

%On the other hand as we choose the Euler circuits such that we include consecutive edges consecutively while tagging the vertices ignored, we shall not encounter a situation where we have to mark the same instant ignored at two different situations when both the consecutive edges are repetitions of second kind. Further if two conse  

\noindent 
\textbf{Step 5.(Upper bounds to the number of times a vertex appears at an ignored instant)}
Now given a permutation of such kind we enumerate the number of times a vertex appears at an ignored instant. At first we consider the edges in the first part. Suppose for the $i$ th type $j$ vertex there are $2\delta'_{i,j}$ many edges incident to it with the following property. When we consider the Euler circuit, both the edges entering and going out of the vertex belong to the repetition of the first kind. We call this set to be the edges of type $I$. Similarly define $2\delta''_{i,j}$ to be the number of edges where both the edges entering and going out of the vertex belong to the repetition of the second kind. We call this set to be the edges of type $II$. Now there can be two other types of edges. Firstly there can be edges incident to a vertex such that when the edge enters a vertex according to the Euler circuit it is a repetition of the first kind and the edge going out of the vertex is a repetition of the second kind. We call this set to be the edges of type $III$. Similarly the exact opposite of it can also happen which will be called type $IV$. We denote the number of such edges by $2\kappa''_{i,j}$ and $2\kappa'_{i,j}$ respectively. We now give an upper bound to the number of times the given vertex appears at an ignored instant. Firstly observe that among the edges of type $I$  we don't have a choice for which vertex is ignored and which is not ignored. So for edges of this type we put an upper bound of $2\delta'_{i,j}.$ On the other hand for edges of type $II$ we put an upper bound of $\delta'_{i,j}$ instead of $2\delta'_{i,j}$. Now for edges of type $III$ we upper bound it by $\kappa''_{i,j}$. Since here we have the edge coming out of the vertex is a repetition of second kind. So we don't consider that instant as new ignored instant. On the other hand the edge entering to the vertex can  correspond to a new ignored instant for which we do not have any choice. Finally for type $IV$ we bound it by $2\kappa'_{i,j}$ as the edge entering the vertex is a repetition of second kind so we count that instant as a new ignored instant and the edge coming out of the vertex is a repetition of first kind so it is possible that that instant is an ignored instant too. So our upper bound is $2\delta'_{i,j}+2\kappa'_{i,j}+ \delta''_{i,j}+ \kappa''_{i,j}$. 

%Now we spend some time on count of the permutation of the type $j$ vertices formed by the discussed procedure. 

We now count the number of permutations of the non-ignored vertices. There are $N-L$ many of them and assume that $N'_{j}$ of them are of type $j$. So that $\sum_{j}(j-1)N_{j}'= N-L$. Further we assume that for the $i$ th vertex of type $j$, $L_{i,j}$ many of them are non-ignored. So permuting all these gives us a choice of 
\begin{equation}\label{eq:boundprelim}
\begin{split}
\frac{(N-L)!}{\prod_{j}N_{j}'! \prod_{j}\prod_{i=1}^{N_{j}}(j-L_{i,j})!}.
\end{split}
\end{equation}
Compiling these factors and the arguments given before we have the following upper bound to the number of skeleton words.

\begin{lemma}
Fixing the parameters we just discussed, the number of skeleton words is bounded by 
\begin{equation}\label{eq:modskebound}
\begin{split}
& n^{\sum_{i}p_{i}- N +1}C^{N}\prod_{e} \frac{1}{\left[\frac{r_{e}}{2}\right]!} \frac{(N-L)!\prod_{j}\prod_{i=1}^{N_{j}}j^{j-\delta_{i,j}}}{\prod_{j}N'_{j}! \prod_{j}\prod_{i=1}^{N_{j}}(j-L_{i,j})!}\prod_{q} \frac{\Delta_{q}^{\sum_{j,i}\eta_{i,j,q} }}{\prod_{j}\prod_{i}\eta_{i,j,q}!}\prod_{j}\prod_{i=1}^{N_{j}} \frac{j^{\zeta_{i,j}}}{\prod_{j'}\prod_{i'} \eta'_{i,j,i',j'}!}\times \\
&\frac{(2m)^{2N-2L-2+ \sum_{j}N_{j}'}}{(2N-2L-2)!}.
\end{split}
\end{equation} 
\end{lemma}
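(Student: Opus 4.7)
The plan is to assemble the bound by multiplying together the separate upper bounds established in discussions (i)--(vii) immediately preceding the lemma, and verify that no double-counting occurs. First I would fix the filled-up path; by the construction leading to \eqref{countskeleton}, the choice of vertex labels for the upward chunks (of total length $\sum_i p_i$, with $N$ endpoints already determined by previous chunks) contributes at most $n^{\sum_i p_i - N +1}$. The movement choices at each type $j$ vertex contribute the product $\prod_j\prod_i j^{j-\delta_{i,j}}$, where the exponent has been reduced from $j$ to $j-\delta_{i,j}$ via Step 1 of part (vii) (an edge traversed odd many times imposes $2\delta$ removed edges at the vertex, which kills a factor of $3^{j\delta}$ of otherwise allowed choices). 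The numerical factors $3^{j-\delta}$ are absorbed into the constant $C^N$. Part (vi) then supplies the deflation $\prod_e 1/\lfloor r_e/2 \rfloor!$ accounting for the fact that the ordered closings of copies of edge $e$ were over-counted.

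Next I would multiply by the number of permutations of the non-ignored type $j \ge 2$ instants, which by \eqref{eq:boundprelim} equals exactly $(N-L)!/\bigl(\prod_j N_j'! \prod_{j,i}(j-L_{i,j})!\bigr)$. Then I would place the ignored instants in two batches, corresponding to the two kinds of repetition isolated in part (i). For the first kind (repetition through a multiply-visited level $q$), by part (iv) each ignored instant must sit immediately above one of the $\Delta_q$ returns to $q$, giving $\prod_q \Delta_q^{\sum_{i,j}\eta_{i,j,q}} / \prod_{i,j}\eta_{i,j,q}!$; the multinomial quotients come from the fact that instants sharing a label are indistinguishable. For the second kind, each ignored instant is slotted into one of at most $j$ possible positions incident to the $i$th type $j$ vertex that receives it (from the Euler-circuit tagging in Step 3 of part (vii)), yielding $\prod_{i,j} j^{\zeta_{i,j}}/\prod_{i',j'}\eta'_{i,j,i',j'}!$.

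Finally I would account for the free placements governed by part (v): after fixing all of the above, the vertices whose non-first appearances are all ignored still need their position inserted into the reduced skeleton of length $\le 2m$, subject to the non-crossing constraints derived from Lemma \ref{lem:noncrossing}. A Riemann-sum / volume estimate analogous to the one used in \eqref{eq:firstintegralrep}--\eqref{eq:secondintegralrep} bounds the number of admissible placements by $(2m)^{2N-2L-2+\sum_j N_j'}/(2N-2L-2)!$, where the factorial arises from the ordering constraint and the exponent counts the genuine free positions left after the non-ignored permutation and the two batches of ignored placements have been consumed.

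The principal obstacle will be to justify rigorously that the five multiplicative pieces compose without over-counting, in particular handling the ``double assignment'' issue from Step 4 of part (vii): one must check that the convention (ignored trumps non-ignored; when both tags are ignored, use the rules of the Euler circuit construction) really produces a well-defined injection from the parameter space into the set of skeleton words, so that the product bound is legitimate. A secondary obstacle is bookkeeping: ensuring the exponents of $n$, $(2m)$, and $j$, as well as the factorials in the denominators, balance across the two types of repetitions and across vertices that contribute simultaneously to $\delta_{i,j}$, $L_{i,j}$, $\eta_{i,j,q}$, and $\zeta_{i,j}$.
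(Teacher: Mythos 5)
Your proposal is essentially the paper's own argument: the paper proves this lemma simply by compiling the bounds established in parts (i)--(vii) preceding the statement (the $n^{\sum_i p_i - N+1}$ labelling count, the $(N-L)!/\bigl(\prod_j N_j'!\prod_{j,i}(j-L_{i,j})!\bigr)$ permutation of non-ignored instants, the two placement factors for the two kinds of repetition, the $\prod_e 1/\left[\tfrac{r_e}{2}\right]!$ deflation, and the movement-choice factor with constants absorbed into $C^N$), exactly as you do. The only slight slip is your attribution of the final factor $(2m)^{2N-2L-2+\sum_j N_j'}/(2N-2L-2)!$ to the part (v) insertions: in the paper that factor counts the remaining free chunk-length and position parameters of the filled-up path (the analogues of the $p_i$, $q_i$, $r_i$, reduced by $2L$ because each ignored instant freezes two parameters), with the factorial coming from the ordered Riemann-sum bound, while the part (v) choices only contribute an exponential factor absorbed into $C^N$; your description of the exponent and factorial is nonetheless correct in substance.
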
 
\begin{proof}
The proof of this lemma is essentially compiling the arguments given after the statement of Proposition \ref{prop:ge3} until now. So we omit the details.
\end{proof} 

\noindent
\textbf{(viii) Counting the number of free parameters:}
Here we show that the number of remaining parameters are bounded by $c^{N}$ for some given $N$. In order to do this, we at first give a step by step algorithm to form the permutation of type $j$ instants. This algorithm is useless if there is no multiple assignment of tags. However if there is multiple assignments this algorithm helps us to count the remaining free parameters.
\begin{algorithm}\label{alg:freeparameter}
The algorithm starts with the permutation of the non-ignored vertices. It then consists of the following steps:\\
\textbf{Step 1:} Once the non-ignored vertices are fixed, we look at the existing edges among the non-ignored vertices and place the ignored vertices   adjacent to the non-ignored vertices corresponding to repetitions of the edges among the existing edges. We continue this as long as we can.\\
\textbf{Step 2.} In \textbf{Step 1} we create some new vertices and edges. We now look at the newly created vertices in \textbf{Step 1} as candidates for placing the newer ignored vertices adjacent to. We continue as long as we can and move to \textbf{Step 3}(subsequently \textbf{Step 4} etc.) with the same strategy but with the vertices of \textbf{Step 2}(subsequently \textbf{Step 3} etc.) as candidates for placing the newer ignored vertices adjacent to. We continue this procedure and finish when all the edges have been covered.
\end{algorithm}
However observe that given a word, the initial set of vertices we chose as candidates for being non-ignored might not all turn up to be non-ignored at the end. Due to the multiple assignments some non-ignored vertex might be tagged as ignored. So one might think that there might be some cases where the \textbf{Step 1} of Algorithm \ref{alg:freeparameter} might not be possible. However this is not the case. Observe that whenever we assign a position to an ignored instant, it corresponds to a repetition of an edge which previously existed in the skeleton word. We call this edge to be the predecessor of the current edge. We now look at the first non-ignored instant which is tagged as ignored. The edge corresponding to this ignored instant has a predecessor. We continue finding the predecessors until we are unable to find one. In this case both the instants corresponding to that edge is non-ignored. As a consequence, there is at least one edge among the existing edge sets of non-ignored instants which is repeated. Hence \textbf{Step 1} is always feasible.

As every ignored instant corresponds to a type $j\ge 2$ instant, we at first introduce a partition of $N$. Since the number of steps  to conclude Algorithm \ref{alg:freeparameter} is at most $N$, this can be done in $2^{2N}$ ways. In particular we have $\sum_{i} \varsigma_{i}=N$ where $\varsigma_{i}\ge 0$ are integers. Here $\varsigma_{i}$ denotes the number of  ignored instants corresponding to the repeated edges which were created at \textbf{Step $i$} of Algorithm \ref{alg:freeparameter}. Once $\varsigma_{i}$'s are fixed, we look at the set $\mathcal{E}_{i}$ denoting the repeated edges which were created at \textbf{Step $i$}. We again partition $\varsigma_{i}$ in $\mathcal{E}_{i}$ many groups. We call the cardinality of the $j$ th group $\varsigma_{i,j}$. Here we have $\sum_{j=1}^{|\mathcal{E}_{i}|} \varsigma_{i,j}=\varsigma_{i}$. As $\varsigma_{i,j}$'s can be $0$ as well, we bound this by $2^{\varsigma_{i}+ |\mathcal{E}_{i}|}$. Now for each edge there can be at most two choices determining which vertex appears as an ignored instant. Hence this contributes another $2^{\sum_{i,j} \varsigma_{i,j}}=2^{\varsigma_{i}}$ to the factor. As a consequence, the total number of choices is bounded by $2^{2N + \sum_{i} \varsigma_{i}+ |\mathcal{E}_{i}|+ \sum_{i}\varsigma_{i}}\le 32^{N}$.

We now have the enough machineries to prove Proposition \ref{prop:ge3}. 
\begin{proof}[Proof of Proposition \ref{prop:ge3}]
The proof is divided into two parts. In the first part we fix the choices in the associated Dyck paths and calculate the quantities for the skeleton word. In the second part we consider the repetitions of edges in the Dyck paths. 

%Before going in to the \textbf{step 1} of the proof, we observe that there can be edges which are present in both the skeleton word and the Dyck paths. This leads us to modify the definition of skeleton words. In this modified definition we consider the minimal word which contains all the edges in the skeleton word and the edges in Dyck path which are common between the skeleton word and the Dyck paths and we associate the corresponding exploration order. We call it the modified skeleton word. Observe that in the modified skeleton word all the edges are repeated at least thrice. This modified skeleton word now has a different parameter $N$ and $m$. However one might note that all the properties derived for the skeleton word remain valid. So from now on we shall call the modified skeleton word, the skeleton word and define the parameters accordingly. 

\noindent
Now we go into \textbf{Step 1} of the proof.

\noindent 
\textbf{Step 1}
Here we start with the skeleton word. There are $\left(\sum_{i=1}^{N}p_{i}+q_{i}\right)=: m+m'$ many edges in the skeleton word. Associated to the endpoints of these edges there will be $m+m'+1$ many Dyck paths. Let their lengths be $2k_{1},\ldots, 2k_{m+m'+1}$ respectively. So that we have $\sum_{i=1}^{m+m'+1} 2k_{i}+ m+m'= k$. We shall show in \textbf{step 2} the factor coming from the Dyck paths in \eqref{eq:wigge3} is $o(n^{\sum_{i=1}^{m+m'+1}k_{i}})$ whenever there is at least one edge in the Dyck path traversed at least four times. 

Now we calculate the total number of choices of the Dyck paths. By \eqref{def:mthcatalan} this is exactly equal to $\frac{m+m'+1}{k+1}\binom{k+1}{\frac{k+m+m' +2}{2}}$. Since by our assumption the  skeleton word and the Dyck paths have disjoint edges for any word $w$, $\E[X_{w}]= \E[X_{S(w)}]\E[X_{D(w)}]$. Here  $S(w)$ is the skeleton word and $X_{D(w)}$ denotes the corresponding random variables in the Dyck paths. 

The main goal of this step is to show that 
\begin{equation}\label{eq:skeletonrep}
\sum_{S(w)} \E[X_{S(w)}] \frac{m+m'+1}{k+1}\binom{k+1}{\frac{k+m+m'+2}{2}}= 2^{k-m-m'}o\left(n^{\frac{m+m'}{2}}\right)
\end{equation}
whenever the words $S(w)$ varies over all skeleton words with $m+m'$ number of edges, every edge traversed at least twice and some edge is traversed at least thrice. Suppose an edge $e$ is traversed $r_{e}$ number of times in the skeleton word. Then by (iv) of Assumption \ref{ass:wig} we have 
\begin{equation}
\E[X_{S(w)}]\le \frac{1}{2^{m+m'}}C^{N}\prod_{e} \left[ \frac{r_{e}}{2} \right]!.
\end{equation}
Now we fix all the required parameters of the skeleton word. Our main calculation tool is \eqref{eq:modskebound}. By \eqref{eq:modskebound} we have 
\begin{equation}\label{eq:finalskeleton}
\begin{split}
&\sum_{S(w)} \E[X_{S(w)}] \frac{m+m'+1}{k+1}\binom{k+1}{\frac{k+m+m'+2}{2}}\\
&\le \frac{1}{2^{m+m'}}\sum_{\text{parameters}}n^{m- N +1}C^{N}\\
&~~\prod_{e} \frac{1}{\left[\frac{r_{e}}{2}\right]!} \frac{(N-L)!\prod_{j}\prod_{i=1}^{N_{j}}j^{j-\delta_{i,j}}}{\prod_{j}N'_{j}! \prod_{j}\prod_{i=1}^{N_{j}}(j-L_{i,j})!}\prod_{q} \frac{\Delta_{q}^{\sum_{j,i}\eta_{i,j,q} }}{\prod_{j}\prod_{i}\eta_{i,j,q}!}\prod_{j}\prod_{i=1}^{N_{j}} \frac{j^{\zeta_{i,j}}}{\prod_{j'}\prod_{i'} \eta'_{i,j,i',j'}!} \times \\
&~~ \frac{(2m)^{2N-2L-2+ \sum_{j}N_{j}'}}{(2N-2L-2)!}\frac{m+m'+1}{k+1}\binom{k+1}{\frac{k+m+m +2}{2}} \prod_{e} \left[ \frac{r_{e}}{2} \right]!.
\end{split}
\end{equation}  
We analyze \eqref{eq:finalskeleton} step by step. 
We at first cancel the $\prod_{e}\left[ \frac{r_{e}}{2} \right]!$ from numerator and denominator. Next we cancel the term $\prod_{j}\prod_{i=1}^{N_{j}}j^{j-\delta_{i,j}}$ in the numerator with the term $\prod_{j}\prod_{i=1}^{N_{j}}(j-L_{i,j})!$ as far as we can and we bound the remaining term by $\prod_{j}\prod_{i=1}^{N_{j}}j^{(L_{i,j}-\delta_{i,j})}$. By Lemma \ref{lem:universal}, we know that $\Delta_{q}$'s can be dominated by i.i.d. sub-exponential random variables under of uniform measure all simple symmetric random walks of length $2N$. On the other hand the total number of non-crossing partition is bounded by $4^{N}$.  So the term
\begin{equation}
\sum_{\mathcal{P} \in NC(N)}\prod_{q}\Delta_{q}^{\sum_{i}\eta_{i,q}}\le 4^{N}\prod_{q}\left( \sum_{i} \eta_{i,q}\right)!.
\end{equation} 
Our next task is to fix the parameters $N$, $L_{i,j}$, $N_{j}$, $\omega_{q}$, $\eta_{i,q}$'s and take the following sum
\begin{equation}\label{eq:wordcountrepgen}
n^{-N+1}\sum_{m\ge N}\frac{(N-L)!}{\prod_{j}N'_{j}!}\frac{(2m)^{2N-2L-2+ \sum_{j}N_{j}'}}{(2N-2L-2)!}\frac{m+m'+1}{k+1}\binom{k+1}{\frac{k+m+m' +2}{2}}
\end{equation}
By the same arguments as the arguments given to bound \eqref{eq:totwordgensim}, we have \eqref{eq:wordcountrepgen} is bounded by 
\begin{equation}\label{eq:summgen}
\left(  \frac{1}{n}\right)^{L}2^{k}\exp\left( - \frac{(N-L)\log (N-L)}{2} \right).
\end{equation}
Observe that $\exp\left( - \frac{(N-L)\log (N-L)}{2} \right)$ is bounded by $C^{N}\left( \frac{N-L}{2} \right)!$.
Now we write 
\begin{equation}
\begin{split}
& \left( \frac{N-L}{2} \right)!= \frac{\frac{N}{2}!}{\left( \frac{N-L}{2} +1 \right) \ldots \frac{N}{2}}\ge \frac{\frac{N}{2}!}{\left(\frac{N}{2}\right)^{\frac{L}{2}}}\\
& \Rightarrow \exp\left( - \frac{(N-L)\log (N-L)}{2} \right)\le \exp\left( -\frac{N\log N}{2} + \frac{L\log N}{2}\right).
\end{split}
\end{equation}
So \eqref{eq:summgen} is bounded by $2^{k}\exp\left( - L \log k - \frac{N\log N}{2}\right).$
We plug this in \eqref{eq:finalskeleton} to get the following reduced form: 
\begin{equation}\label{dot}
\begin{split}
&\sum_{\text{parameters}}2^{k-m-m'}C^{N} \prod_{j}\prod_{i=1}^{N_{j}}j^{L_{i,j}-\delta_{i,j}}\prod_{q} \frac{\left(\sum_{j,i}\eta_{i,j,q}\right)! }{\prod_{j}\prod_{i}\eta_{i,j,q}!}\prod_{j}\prod_{i=1}^{N_{j}} \frac{j^{\zeta_{i,j}}}{\prod_{j'}\prod_{i'} \eta'_{i,j,i',j'}!}\\
&\exp\left(  - L \log k - \frac{N\log N}{2}   \right)\\
&= \sum_{\text{parameters}}2^{k-m-m'}C^{N} \prod_{j}\prod_{i=1}^{N_{j}}j^{L_{i,j}-\delta_{i,j}}\prod_{q} \frac{\left(\sum_{j,i}\eta_{i,j,q}\right)! }{\prod_{j}\prod_{i}\eta_{i,j,q}!}\prod_{j}\prod_{i=1}^{N_{j}} \frac{j^{\zeta_{i,j}}}{\prod_{j'}\prod_{i'} \eta'_{i,j,i',j'}!}\\
& \exp\left(  - L \log k - \frac{N\log N}{2}  \right)
\end{split}
\end{equation}
We now focus on the term 
\[
\prod_{j}\prod_{i=1}^{N_{j}}j^{L_{i,j}-\delta_{i,j}}\prod_{q} \frac{\left(\sum_{j,i}\eta_{i,j,q}\right)! }{\prod_{j}\prod_{i}\eta_{i,j,q}!}\prod_{j}\prod_{i=1}^{N_{j}} \frac{j^{\zeta_{i,j}}}{\prod_{j'}\prod_{i'} \eta'_{i,j,i',j'}!}
\]
%Firstly in the product $\prod_{j}\prod_{i=1}^{N_{j}} j^{L_{i,j}-\delta_{i,j}}$, we ignore the factor coming from $j\le 1000$. Since this will give only a $C^{N}$ contribution.

Now recall the discussion we had the edges traversed odd number of times. We divide the parameter $2\delta_{i,j}$ into four parts $2\delta'_{i,j},2\delta''_{i,j},2\kappa'_{i,j},2\kappa''_{i,j}$. Here $2\delta_{i,j}= 2\delta'_{i,j}+2\delta''_{i,j}+2\kappa'_{i,j}+2\kappa''_{i,j}$. Now among $\eta_{i,j,q}$'s and $\eta'_{i,j,i',j'}$'s some will correspond to the odd number of repetitions. In these cases we replace $\eta_{i,j,q}$ by $(\eta_{i,j,q}-1)$ and $\eta'_{i,j,i',j'}$ by $(\eta'_{i,j,i',j'}-1)$ in the factorial of the denominator. To denote this mathematically we introduce parameters $\tau_{i,j,q}$ and $\tau'_{i,j,i',j'}$. This parameter is either $1$ or $0$ where $1$ means the corresponding edge is traversed odd number of times and $0$ means the corresponding edge is traversed even number of times. 

For the double assignments we have some repetitions of the first kind to be not considered. Among these some edges might be covered odd number of times. We have argued earlier, if both of these happen, then at that level the edge of consideration comes at least thrice implying $\eta_{i,j,q}\ge 1$ among considered cases. We replace $\eta_{i,j,q}$ by $\eta_{i,j,q}-1$ as well. 

Now observe that the $i$ th type $j$ vertex appears as the endpoint other than the ignored one for repetitions of second type $\kappa''_{i,j}+ \delta''_{i,j}$ times. From the factor $j^{\zeta_{i,j}}$ we take out $\delta''_{i,j}+\kappa''_{i,j}$ th power of $j$.

As a consequence, our factor reduces to 
\begin{equation}\label{1}
j^{\delta''_{i,j}+\kappa''_{i,j}}\frac{j^{\zeta_{i,j}-\delta''_{i,j}-\kappa''_{i,j}}}{\prod_{j'}\prod_{i'} \left(\eta'_{i,j,i',j'}-\tau'_{i,j,i',j'}\right)!}
\end{equation} 

We have proved that for repetitions of the first type at a given level $q$ there can be at most two instants with edges traversed odd number of times. So we take out $\left(\sum_{j,i}\eta_{i,j,q}\right)^2$ from $\left(  \sum_{j,i}\eta_{i,j,q}\right)!$ and bound the squared term by $2^{\sum_{j,i}\eta_{i,j,q}}$. So our factor is lesser than or equal to 
\begin{equation}\label{2}
2^{\sum_{j,i}\eta_{i,j,q}} \frac{(\sum_{j,i}\eta_{i,j,q}-\sum_{j,i}\tau_{i,j,q})!}{\prod_{j}\prod_{i}\left(\eta_{i,j,q}-\tau_{i,j,q}\right)!}
\end{equation}

Now we apply the following fact about multinomial coefficient. Suppose we have $\tau$ numbers $\gamma_{1},\ldots, \gamma_{\tau}$ and for each $\tau$, $\gamma_{\tau}$ is partitioned into $N$ many groups. Let $\gamma_{t,\kappa}$ be the frequency of $\kappa$ th group. Then 
\begin{equation}
\prod_{t=1}^{\tau}\frac{\gamma_{t}!}{\prod_{\kappa=1}^{N}\gamma_{t,\kappa}!}\le \frac{\left(\sum_{t}\gamma_{t}\right)!}{\prod_{\kappa=1}^{N}(\sum_{t}\gamma_{t,\kappa})!}.
\end{equation} 
Applying this to \eqref{1} and \eqref{2} we arrive at the following quantity.
\begin{equation}
\begin{split}
&\prod_{j}\prod_{i}j^{\delta''_{i,j}+\kappa''_{i,j}}\frac{j^{\zeta_{i,j}-\delta''_{i,j}-\kappa''_{i,j}}}{\prod_{j'}\prod_{i'} \left(\eta'_{i,j,i',j'}-\tau'_{i,j,i',j'}\right)!}\prod_{q}2^{\sum_{j,i}\eta_{i,j,q}} \frac{(\sum_{j,i}\eta_{i,j,q}-\sum_{j,i}\tau_{i,j,q})!}{\prod_{j}\prod_{i}\left(\eta_{i,j,q}-\tau_{i,j,q}\right)!}\\
& \le \prod_{j}\prod_{i} j^{\delta''_{i,j}+\kappa''_{i,j}} 2^{N} \frac{(L-\delta)!}{\prod_{j}\prod_{i=1}^{N_{j}}(L_{i,j}-\varphi_{i,j})!}.
\end{split}
\end{equation}
Here $\varphi_{i,j}\le 2\delta'_{i,j}+ 2\kappa'_{i,j}+ \delta''_{i,j}+\kappa''_{i,j}$.
%Here $\varphi_{i,j}$ is the parameter denoting the number of time $i$ th type $j$ vertex appears as an endpoint of an edge traversed odd number of times and type $2$ non-ignored or an ignored vertex simultaneously. Now 
%$\varphi_{i,j}= \varphi_{i,j}'+ \varphi_{i,j}''$ where $\varphi_{i,j}'$ denotes number edges traversed odd number of time with one  endpoint to be the $i$ th type $j$ vertex and the final upward edge is a repetition of the first type. $\varphi''_{i,j}$ denotes the other case. Observe that $\varphi''_{i,j}= \delta''_{i,j}$ and $\varphi'_{i,j}\le 2\delta'_{i,j}$. 

So now we have the following reduction
\begin{equation}
\begin{split}
\frac{\prod_{j}\prod_{i=1}^{N_{j}} j^{L_{i,j}-\delta_{i,j}+ \delta''_{i,j}+\kappa''_{i,j}}}{\prod_{j}\prod_{i=1}^{N_{j}}(L_{i,j}-\varphi_{i,j})!}\le C^{N} \prod_{j}\prod_{i=1}^{N_{j}} j^{\delta'_{i,j}+\delta''_{i,j}+\kappa'_{i,j}+\kappa''_{i,j}}= C^{N}\prod_{j}\prod_{i} j^{\delta_{i,j}}. 
\end{split}
\end{equation}
One might note that this equation is also consistent for the double assignments. This is because for the double assignments we possibly ignored some edges traversed odd number of times. However, when we replace $(\eta_{i,j,q})$ by $(\eta_{i,j,q}-1)$ for double assignments we automatically consider the edge and this adjusts in the $\delta_{i,j}$'s.

\noindent 
%One thing to note that the bound $j^{\delta_{i,j}}$ can be further strengthened. Here we consider the case when we have an edge repetition of the first kind and both the ignored and non-ignored instants are type $j\ge 2$ instants. As one of the type $j$ instant is considered as non-ignored, for this particular edge and the non-ignored instant we do not do any deduction in denominator. 
%
%\noindent 
%For this refer to the following figure and explanation:
%*****(insert figure)*****
%
%Here we consider four consecutive levels $a$, $b$, $c$ and $a$ with the oddly traversed edges are the triangle for these three vertices. Further the edge $\{ b,c \}$ is a repetition of first kind. We also assume $b$ is an ignored instant of the first $a$, $c$ is an ignored instant of $b$ and the second $a$ is an ignored instant of $c$. So all the vertices come as an ignored instant once. However the edge $\{ b,c \}$ is a repetition of first kind so there will not be any $j^{\delta''_{b}+ \kappa''_{b}}$ in the numerator. As $\delta_{b}=1$, the power of $j$ for the instant $b$ is $-\delta_{b}+ 1=0$ instead of $1$. Hence we use $\hat{\delta}_{i,j}$ instead of ${\delta}_{i,j}$ by ignoring the repetitions of first kind for which the type $j\ge 2$ instant appears as non-ignored. 
 
\noindent
Now we neglect $j\le 1000$. Since the product $\prod_{j\le 1000}\prod_{i=1}^{N_{j}}j^{{\delta}_{i,j}}\le \left(1000^{1000}\right)^{N}$. Now for $j\ge 1000$, we have $\sum_{j\ge 1000} j N_{j}\le \frac{1000}{999} \sum_{j\ge 1000} (j-1)N_{j}= \frac{1000}{999} N$. 

Now the rest of the argument is dedicated to bound $\sum_{j\ge 1000}\sum_{i=1}^{N_{j}}{\delta}_{i,j}$. Observe that the $i$ th type $j$ vertex has appeared as an endpoint of $2\delta_{i,j}$ many edges which are traversed odd number of times. As a consequence $$\sum_{j\ge 1000}\sum_{i}\delta_{i,j}\le \sum_{j}\sum_{i} \delta_{i,j}= \# \text{ of edges traversed odd number of times}=\delta.$$

Now to each such edge $e$ we assign two type $j\ge 2$ instants in the following way. We know that all these edges are traversed at least thrice. Hence each edge appears in the upward direction at least twice. The right endpoints of such edges are type $j\ge 2$ instants. Now we only need to consider those edges whose at least one endpoint is type $j\ge 1000$. Now say there are $E_{1}$  edges with exactly one vertex is type $j\ge 1000$ the other vertex is of type $2 \le j < 1000 $ and $E_{2}$ edges with both vertex is type $j\ge 1000$. So we need to calculate an upper bound for $E_{2}+\frac{1}{2}{E}_{1}$. Now for edges corresponding to ${E}_{1}$ instead of two we consider only one instant and we can choose the instant to be at least the second appearance of that instant in case the chosen instant is type $j< 1000$. As a consequence $2E_{2}+ E_{1}\le N+ \sum_{j\ge 1000} N_{j}\le \frac{1000}{999}N.$ So $E_{2}+\frac{1}{2}E_{1}\le \frac{1000}{999}\frac{N}{2}$.

%Firstly we have seen earlier that if $j$ is an endpoint of $2\delta_{i,j}$ many edges the corresponding coefficient is $\delta_{i,j}.$ Hence $\sum_{j}\sum_{i=1}^{N_{j}}\delta'_{i,j}= \# \text{of edges corresponding to such repetitions}=E(say)$. Now to each such edge $e$, we assign three instants. The first one being the instant of first arrival to the corresponding level (say $l$) and second and third one are the instants at level $l+1$ denoting the first and last upward traversal of the edge. We call these set of instants $V_{e}$. In particular each $e$ corresponds a different level $l$ and at each level there are exactly two instants. Hence the total number of such instants are greater than or equal to $2E$. Now we only need to consider those edges whose at least one endpoint is type $j\ge 1000$. Now say there are $E_{1}$  edges with exactly one vertex is type $j\ge 1000$ and $E_{2}$ edges with both vertex is type $j\ge 1000$. So we need to calculate an upper bound for $E_{2}+\frac{1}{2}E_{1}$. Now for edges corresponding to $E_{1}$ instead of two we consider only one instant and we can choose the instant to be at least type $2$. As a consequence $2E_{2}+ E_{1}\le N+ \sum_{j\ge 1000} N_{j}\le \frac{1000}{999}N.$ So $E_{2}+\frac{1}{2}E_{1}\le \frac{1000}{999}\frac{N}{2}$.  

Plugging this into \eqref{dot} we have the following upper bound to \eqref{dot}:
\begin{equation}
\begin{split}
&\sum_{\text{parameters}} 2^{k-m-m'} C^{N} \exp\left\{ \left(\sum_{j\ge 1000}\sum_{i}\delta_{i,j} \right)\log N- \delta  \log k - \frac{N\log N}{2} \right\}\\
&\le \sum_{\text{parameters}} 2^{k-m-m'} C^{N} \left( \frac{1}{\sqrt{n}} \right)^{\delta}\exp\left\{ \left( \sum_{j\ge 1000}\sum_{i}\delta_{i,j} \right) \log N - \frac{\delta}{4}\log N - \frac{N\log N}{2} \right\}\\
& \le \sum_{\text{parameters}} 2^{k-m-m'} C^{N}\left( \frac{1}{\sqrt{n}} \right)^{\delta} \exp\left\{ \frac{3\sum_{j\ge 1000}\sum_{i}\delta_{i,j}}{4} \log N - \frac{N\log N}{2} \right\}\\
& \le \sum_{\text{parameters}} 2^{k-m-m'} C^{N} \left( \frac{1}{\sqrt{n}} \right)^{\delta} \exp\left\{ N\log N \left(  \frac{3\times 500}{4\times 999}- \frac{1}{2}\right) \right\}.
\end{split}
\end{equation}
Since $\frac{1}{2}> \frac{3\times 500}{4\times 999}$ this concludes our proof.

\textbf{Step 2:} Now we come to the \textbf{step 2} of the proof. Here we consider the repetitions in the Dyck paths. The main idea of the proof is similar to the proof we have just done but some calculations are different. Like \textbf{step 1} here also we consider two types of repetitions. The definitions are the same.% In the first type we consider the repetitions of the edges when they use the level  of an endpoint of at least one previous appearance of the edge. In the second type we consider the repetitions when they do not use the level of an endpoint of any previous appearance of the edge. 

At this point we fix the skeleton word and the Dyck paths . Let the Dyck paths be $\mathcal{D}_{1},\ldots, \mathcal{D}_{m+m'+1}$ with their respective lengths $2k_{1},\ldots, 2k_{m+m'+1}$. We also denote $w_{1},\ldots, w_{m+m'+1}$ to be all possible words corresponding to the Dyck paths $\mathcal{D}_{1},\ldots , \mathcal{D}_{m+m'+1}$.
Our fundamental goal is to prove the following bound.
\begin{equation}
\sum_{w_{1}',\ldots, w_{m}'} \E\left[ X_{D(w)} \right]= \frac{1}{2^{
k-(m+m')}}o\left( n^{\sum_{i}k_{i}} \right)
\end{equation} 

Here we assume $\Gamma$ be the total number of type $j\ge 2$ instant where for any given $j$ there are $\Gamma_{j}$ many instants of type $j$. In particular $\Gamma = \sum_{j\ge 2} (j-1)\Gamma_{j}$. 

Here also we shall at first fix a permutation of the non-ignored instants (they are defined in the same manner as the skeleton word) which along with the type $1$ instants specify the first traversal of all the edges. Let $L_{D}$ denote the number of ignored instants in this case and $\Gamma_{j}'$  be the number of non-ignored instants repeated exactly $j$ times. Further we assume that the $i$ th type $j$ instant appears $L_{i,j,D}$ times as an ignored instant. Now the permutations of the non-ignored vertices can be done in 
\begin{equation}\label{eq:permdyck}
\frac{(\Gamma-L_{D})!}{\prod_{j} \Gamma'_{j}! \prod_{j}\prod_{i=1}^{N_{j}}(j-L_{i,j,D})!}
\end{equation}
 ways.    

\noindent 
We have discussed earlier that there might be type $j$ instants such that all the appearances of the corresponding vertex come as ignored instant. So $L_{D}=\sum_{j}\sum_{i}L_{i,j,D} + \hat{L}_{D}$. Let $\hat{L}_{j,D}$ denote the number of type $j$ instant of this kind. Clearly $\hat{L}=\sum_{j} j\hat{L}_{j,D}$. Now we divide $\hat{L}_{j,D}$ into further two cases:
firstly those which appear as ignored instants to type $1$ vertices (call them $\hat{L}_{j,D}'$) and secondly those which appear as ignored instants to type $j \ge 2$ vertices(call them $\hat{L}_{j,D}''$). 
 %firstly those which are adjacent to type $1$ vertices which do not correspond to repetition of first kind of other instants belonging to $L_{i,j,D}$'s for some $i,j$ (call them $\hat{L}_{j,D}'$) and those which are adjacent to type $j \ge 2$ vertices only or type $1$ vertices such that it corresponds to repetition of first kind of other instants belonging to $L_{i,j,D}$'s for some $i,j$(call them $\hat{L}_{j,D}''$). 
Observe that the vertices for $\hat{L}_{j,D}'$'s will correspond to repetitions of the first kind and they come in groups. In particular for a single level there might be multiple instants. Let there be $\xi$ such groups. The location of such groups can be chosen in $\left( \sum_{i} k_{i}\right)^{\xi}$ many ways. Let these levels be $q_{1},\ldots, q_{\xi}$ and for level $q_{i}$, there are $\hat{L}_{j,q_{i},D}$ many instants of type $j$. So there are $\sum_{j}j\hat{L}_{j,q_{i},D}$ many positions to be chosen for a level $q_{i}$. Hence the total number of choices for level $q_{i}$ is bounded by $$\frac{\Delta_{q_{i},D}^{\sum_{j}j\hat{L}_{j,q_{i},D}}}{\prod_{j}\hat{L}_{j,q_{i},D}!(j-1)!^{\hat{L}_{j,q_{i},D}} }.$$ On the other hand the total number of choices for the first appearances for the vertices corresponding to $\hat{L}''_{j,D}$ is bounded by $2^{c\Gamma}$.
 
Now for each level $q$ we assume the corresponding Dyck path come to the level $\Delta_{q,D}$ times. From Lemma \ref{lem:universal} we know that if we consider the uniform distribution of over the feasible paths, then we know that the $\Delta_{q,D}$'s can be dominated by i.i.d. copies of $X$ where $\mathbb{P}\left[ X \ge t \right]\le n^{\vartheta}\frac{1}{2^{t}}$ where $\vartheta$ is a fixed constant not depending on any other parameters. In particular 
\[
\E\left[ \Delta_{q,D}^{\eta} \right]\le \left\{ 
\begin{array}{ll}
c & \text{If $\eta \le 1000$}\\
n^{\vartheta}\eta! & \text{otherwise}.
\end{array}
\right. 
\]
From the choices of the non-ignored vertices which edges are repeated within the $\Delta_{q,D}$ returns to the level $q$ are determined. Now like before suppose the $i$ th type $j$ instant comes immediately after the return to the level $q$ before falling is $\eta_{i,j,q,D}$ times. These positions can be chosen in 
\begin{equation}\label{eq:dyckrepty1}
\frac{\Delta_{q,D}^{\sum_{j,i}\eta_{i,j,q,D}}}{\prod_{j}\prod_{i}\eta_{i,j,q,D}!}
\end{equation}
ways. 

Now like the calculation for the skeleton words we assume that for $i$ th type $j$ instant there are $\eta'_{i,j,i',j',D}$ many ignored instants corresponding to $i'$ th type $j'$ instant. Let $\zeta_{i,j}=\sum_{j',i'} \eta_{i,j,i',j',D}$. The total choices for this is bounded by 
\begin{equation}\label{eq:dyckrepty2}
\frac{j^{\zeta_{i,j}}}{\prod_{j'}\prod_{i'}\eta'_{i,j,i',j',D}!}.
\end{equation}
Now we come to the expectation of a random variable coming in the product $X_{D(w)}$. Suppose a random variable (corresponding to edge $e=\{ e_{1},e_{2} \}$) is repeated $2r_{e}$ times in the product $X_{D(w)}$. Then we divide this repetition in four parts $r_{e,1},r_{e,2},r_{e,3}$ and $r_{e,4}$. Here $2r_{e,1}$ denotes the number of times the edge is repeated as first kind where the instants corresponding to the level of return has label $e_{1}$. Similarly $2r_{e,2}$ denotes the number of times the edge is repeated as first kind where the instants corresponding to the level of return has label $e_{2}$. Finally $2r_{e,3}$ and $2r_{e,4}$ denotes the number of time the edge is repeated as second kind with the ignored instant having label $e_{1}$ and $e_{2}$ respectively. Now the expectation of the random variable is bounded by $r_{e}!$. It is easy to see that $r_{e}! \le C^{r_{e}} \prod_{i=1}^{4} r_{e,i}!$. Now for the repetitions of the first kind several different values of $q$ can correspond to same vertex. So $r_{e,1}$ and $r_{e,2}$ is further partitioned into say $\tau$ many groups. Each corresponds to returns to a single level. Now if we have different levels corresponding to the same value of the label, we divide them into two parts. In the first part we consider where the value of $\sum_{i,j}\eta_{i,j,q,D}$ is less than $1000$ and where the sum is greater than $1000$. We assume $\E_{\text{Unif}}$ denotes the uniform measure on all the feasible paths and $\tau'$ be the number of groups where the sum is less than $1000$ and $\tau''$ be the number of groups where the sum is greater than $1000$.  Here we are dealing with the following sum:
\begin{equation}\label{eq:otherpart}
\begin{split}
&\sum_{\eta_{i,j,q_{f},D}} \E_{\text{Unif}}\left[ \prod_{f=1}^{\tau}\frac{\Delta_{q_{f},D}^{\sum_{i,j}\eta_{i,j,q_{f},D}}}{\prod_{j}\prod_{i} \eta_{i,j,q_{f},D}!} \right]\\
&\le \sum_{\eta_{i,j,q_{f},D}} \prod_{f=1}^{\tau} \frac{1}{\prod_{j}\prod_{i}\eta_{i,j,q_{f},D}!} \prod_{f=1}^{\tau} \left[ c\mathbb{I}_{\sum_{j,i}\eta_{i,j,q_{f},D}\le 1000} + \mathbb{I}_{\sum_{j,i}\eta_{i,j,q_{f},D}> 1000}n^{\vartheta}\left( \sum_{j,i} \eta_{i,j,q_{f},D} \right)! \right]\\
& \le \sum_{\eta_{i,j,q_{f},D}} C^{\tau'} \prod_{f=1}^{\tau''} n^{\vartheta \tau''} \frac{\left(\sum_{j,i}\eta_{i,j,q_{f},D}\right)!}{\prod_{j}\prod_{i} \eta_{i,j,q_{f},D}!}\\
& \le C^{\tau'} \sum_{\sum_{f}\eta_{i,j,q_{f},D}} n^{\frac{\vartheta \sum_{f=1}^{\tau''}\sum_{j,i}\eta_{i,j,q_{f},D}}{1000}} \frac{\left( \sum_{f=1}^{\tau''} \sum_{j,i} \eta_{i,j,q_{f},D}\right)!}{\prod_{j}\prod_{i} \left( \sum_{f=1}^{\tau'} \eta_{i,j,q_{f},D} \right)!} 
\end{split} 
\end{equation}
Now we give the upper bound to $\sum_{w_{1},\ldots, w_{m+m'+1}}\E\left[ X_{D(w)}  \right]$. This is done by the modified bound on the words we discussed so far keeping the repetitions of the edges in mind. In particular, 
\begin{equation}\label{eq:lastlast}
\begin{split}
&\sum_{w_{1},\ldots , w_{m+m'+1}} \E\left[ X_{D(w)} \right]\\
& \le \left( \frac{1}{2} \right)^{k-m-m'}n^{\sum_{i}k_{i}- \sum_{j}(j-1)\Gamma_{j}}\exp\left( - \frac{\left( \sum_{i} k_{i} \right)^2}{2n} \right)\sum_{\Gamma_{2}'}\frac{1}{\Gamma_{2}'!}\left( \frac{\left( \sum_{i} k_{i} \right)^2}{2} \right)^{\Gamma_{2}'}\\
& ~~~~~~~~~ \sum_{\Gamma_{j}'~|~ j \ge 3}  \frac{\left(\Gamma-L_{D}\right)!}{\prod_{j}\Gamma_{j}'! \prod_{j}\prod_{i=1}^{N_{j}} (j- L_{i,j,D})!}\frac{\left( \sum_{i} k_{i} \right)^{ \sum_{j}j\Gamma_{j}'}}{\left(\Gamma- L_{D}\right)!}\times \prod_{i=1}^{\xi}\left( \sum_{
i}k_{i} \right)\frac{\Delta_{q_{i},D}^{\sum_{j}j\hat{L}_{j,q_{i},D}}}{\prod_{j}\hat{L}_{j,q_{i},D}!(j-1)!^{\hat{L}_{j,q_{i},D}} }\times  2^{c\Gamma} \\
& ~~~~~~~ \prod_{j}\prod_{i}\sum_{\zeta_{i,j}} \frac{j^{\zeta_{i,j}}}{\prod_{j'}\prod_{i'} \eta'_{i,j,i',j',D}!} \times \prod_{j'}\prod_{i'} \left( \eta'_{i,j,i',j',D}! \right)\\
& ~~~~~~~~ \prod_{q}\sum_{\sum_{f}\eta_{i,j,q_{f},D}} n^{\frac{\vartheta \sum_{f=1}^{\tau''}\sum_{j,i}\eta_{i,j,q_{f},D}}{1000}} \frac{\left( \sum_{f=1}^{\tau''} \sum_{j,i} \eta_{i,j,q_{f},D}\right)!}{\prod_{j}\prod_{i} \left( \sum_{f=1}^{\tau'} \eta_{i,j,q_{f},D} \right)!} \prod_{j}\prod_{i}\left( \sum_{f=1}^{\tau''} \eta_{i,j,q_{f},D} \right)!\\
&\le \left( \frac{1}{2} \right)^{k-m-m'}n^{\sum_{i}k_{i}}2^{c\Gamma}\exp\left( - \frac{\left( \sum_{i} k_{i} \right)^2}{2n} \right)\sum_{\Gamma_{2}'}\frac{1}{\Gamma_{2}'!}\left( \frac{\left(\sum_{i}k_{i}\right)^2}{n} \right)^{\Gamma_{2}'}\sum_{\Gamma_{j}' ~|~ j\ge 3} \frac{1}{\Gamma_{j}'!}\left(\frac{\left( \sum_{i} k_{i} \right)^{j}}{(j-1)! n^{j-1}}\right)^{\Gamma_{j}'}\\
&~~~~~~~~~~ \times \prod_{i=1}^{\xi}\left( \mathbb{I}_{\sum_{j}j\hat{L}_{j,q_{i},D}\le 1000} \frac{\left(\sum_{i}k_{i}\right)\left(c\log n\right)^{\sum_{j}j\hat{L}_{j,q_{i},D}}}{n^{\sum_{j}j\hat{L}_{j,q_{i},D}}} \right.\\
&~~~~~~~~~~~~~~~\left. + \mathbb{I}_{\sum_{j}j\hat{L}_{j,q_{i},D}> 1000} 2^{1000 \sum_{j}j\hat{L}_{j,q_{i},D}}\left( \sum_{i} k_{i} \right)\prod_{j=1}^{1000}\left(\frac{(\sum_{i}k_{i})^{j-1}}{n^{j-1}}\right)^{\hat{L}_{j,q_{i},D}}\prod_{j>1000}\left( \frac{\left( \sum_{i} k_{i} \right)^{j}}{n^{j-1}} \right)^{\hat{L}_{j,q_{i},D}}\right)\\
&~~~~~~~~~~\times \sum_{L_{i,j,D}}\left(\frac{n^{\varsigma} \left( \sum_{i} k_{i} \right)}{n}\right)^{\sum_{j}\sum_{i} L_{i,j,D}} \times \sum_{\hat{L}_{j,D}''}\left( \frac{n^{\varsigma}\sum_{i}k_{i}}{n} \right)^{\hat{L}_{j,D}''}\\
&= o\left(n^{\sum_{i}k_{i}}\left(\frac{1}{2}\right)^{k-m-m'}\right)
\end{split}
\end{equation} 
As we can choose $\varsigma>0$ any positive number, choosing $\varsigma$ small enough we have the result.
Here the factors $\left( \frac{1}{2}^{k-m-m'} \right)$ and the factorials in the numerator comes from $\E[X_{D(w)}].$ %Also note that we have skipped the details analysis of \eqref{eq:lastlast} since this is very similar to the previous parts.
\end{proof}

Compiling Propositions \ref{prop:ge3}, \ref{lem:countingstrategy} and \ref{prop:2times} we arrive at the proof of Theorem \ref{thm:traceconvergence}. 
%\begin{theorem}\label{thm:traceconvergence}
%Consider the Wigner matrix $W$ satisfying Assumption \ref{ass:wig}. Then for any fixed $t \in (0,\infty)$ taking $k= \left[tn^{\frac{2}{3}}\right]$, we have the following results 
%\begin{enumerate}
%\item $\E\Tr\left[ W^{2k} \right]=O(1)$ and $\E\Tr\left[W^{2k+1}\right]=o(1)$.
%\item If the limit of $\lim_{n \to \infty} \E\Tr\left[ W^{2k} \right] $ for some $t\in (0,\infty)$ exists, then the limit only depends on the first and second moment of entries. 
%\item As the limit exists for Gaussian entries, the limit exists and is universal for any Wigner matrix satisfying Assumption \ref{ass:wig}. 
%\end{enumerate}
%\end{theorem}
\section{Proof of Theorem \ref{thm:jointmom}}
From discussion at the beginning of this paper, we have seen that in order to prove the Tracy Widom distribution at the edge, in addition to bounding the expectation of high value of traces we also need to bound the joint moments. In this section we deal with this problem. 

\noindent 
Before going into the proof of Theorem \ref{thm:jointmom}, we state another important algorithm. This algorithm takes two closed words $w_{1}$ and $w_{2}$ of lengths $k_{1}+1$ and $k_{2}+1$ as input such that the words have at least one edge in common and gives a closed word $w_{3}$ of length $k_{1}+k_{2}+1$ as an output which has the same edge set as the union of the edges of $w_{1}$ and $w_{2}$. Variants of this algorithm has appeared in \citet{Ban16}, \citet{BanB16} and \citet{Banerjee2017}.    
\begin{algorithm}\label{alg:embed}
We start with two words $w_{1}$ and $w_{2}$ such that $w_{1}$ and $w_{2}$ shares an edge. Let $\{ \alpha, \beta \}$ be the first edge in $w_{2}$ which is repeated in $w_{1}$. We consider the first appearance of $\{\alpha, \beta \}$ in $w_{2}$. Without loss of generality we assume that the  first appearance of the edge $\{ \alpha , \beta \}$ appears in the word $w_{2}$ in the order $(\alpha,\beta)$. We now consider any appearance (for concreteness say the first) of the edge $\{ \alpha, \beta \}$ in the word $w_{1}$. This appearance $\{ \alpha ,\beta \}$ can be traversed in $w_{1}$ in the order $(\alpha,\beta)$ or $(\beta, \alpha)$.
Considering these we have the word $w_{2}$ looks like 
\begin{equation}\label{eq:w1before}
w_{2}= (\alpha_{0},\alpha_{1},\ldots,\alpha_{p_{1}}, \alpha, \beta,\ldots,\alpha_{k_{1}-1},\alpha_{0})
\end{equation} 
and the word $w_{1}$ looks like 
\begin{equation}\label{eq:w21}
w_{1}= \left( \beta_{0}, \beta_{1},\ldots,\beta_{q_{1}},\alpha, \beta, \ldots, \beta_{k_{2}-1},\beta_{0} \right)
\end{equation}
or
\begin{equation}\label{eq:w22} 
 w_{1}=(\beta_{0},\beta_{1},\ldots,\beta_{q_{1}},\beta,\alpha,\ldots,\beta_{k_{2}-1},\beta_{0}).
 \end{equation}
Now we output the word $w_{3}$ as follows:
\begin{enumerate}
\item Suppose $w_{1}$ is of the form \eqref{eq:w21}, then 
\begin{equation}\label{eq:w31}
w_{3}= \left(  \alpha_{0},\alpha_{1},\ldots,\alpha_{p_{1}},\alpha,\beta,\beta_{q_{1}+3},\ldots, \beta_{k_{2}-1}, \beta_{0},\ldots,\beta_{q_{1}},\alpha,\beta,\alpha_{p_{1}+3},\ldots,\alpha_{k_{1}-1},\alpha_{0} \right).
\end{equation}
\item On the other hand when $w_{1}$ is of the form \eqref{eq:w22}, 
\begin{equation}\label{eq:w32}
w_{3}= \left( \alpha_{0},\alpha_{1},\ldots,\alpha_{p_{1}},\alpha,\beta,\beta_{q_{1}},\ldots,\beta_{0},\beta_{k_{2}-1},\ldots,\beta_{q_{1}+3},\alpha,\beta,\alpha_{p_{1}+3},\ldots,\alpha_{k_{1}-1},\alpha_{0}\right). 
\end{equation}
\end{enumerate}
\end{algorithm}
With Algorithm \ref{alg:embed} in hand we are now ready to prove Theorem \ref{thm:jointmom}.
\begin{proof}[Proof of Theorem \ref{thm:jointmom}]
Let for any word $w= \left(\alpha_{0},\ldots, \alpha_{k-1},\alpha_{0}\right)$, $X_{w}$ denote the random variable 
\begin{equation}
X_{w}:= \prod_{j=0}^{k-1} x_{\alpha_{j},\alpha_{j+1}}.
\end{equation}
 Firstly observe that 
\begin{equation}
\begin{split}
&\E\left[ \prod_{i=1}^{i} \left[\Tr\left[W^{k_{i}}\right] - \E\left[  \Tr\left[W^{k_{i}}\right]\right] \right] \right]\\
& = \left(\frac{1}{n}\right)^{\frac{\sum_{i=1}^{l}k_{i}}{2}}\sum_{w_{1}\ldots w_{l}} \E\prod_{i=1}^{l}\left( X_{w_{i}} - \E\left[ X_{w_{i}} \right] \right)
\end{split}
\end{equation}
where $w_{i}$ is of length $k_{i}+1$ for $1\le i \le l$ respectively. 
Now given the words $w_{1},\ldots, w_{l}$ we introduce a partition $\varsigma$ of $\{ 1,\ldots , l \}$ in the following way: We put $i$ and $j$ in the same block if $G_{w_{i}}$ and $G_{w_{j}}$ share an edge. Let $d(\varsigma)$ be the number of blocks in $\varsigma$. Given the words $w_{1},\ldots, w_{l}$ we sequentially embed them in $d(\varsigma)$ closed words with disjoint edges by applying Algorithm \ref{alg:embed} sequentially. Now given $d(\varsigma)$ closed words with disjoint edge set our goal is to find the number of possible choices of $w_{1},\ldots,w_{l}$ which give rise to the given words. We call these words $\mathfrak{w}_{1},\ldots, \mathfrak{w}_{d(\varsigma)}$. To this end we shall also fix the partition $\varsigma$ and the order in which Algorithm \ref{alg:embed} is used. Let for a block $B_{i}= {j_{1,i},\ldots, j_{| B_{i}|,i}}$ of the partition $\varsigma$, the order is given by $\left(j_{\tau_{i}(1),i},\ldots, j_{\tau_{i}(|B_{i}|),i}\right)$. Here $\tau_{i}$ is a permutation of $\{ 1,\ldots, |B_{i}|\}$.

\noindent 
Now we start with the word $\mathfrak{w}_{1}$. From $\mathfrak{w}_{1}$ we at first extract $w_{\tau_{1}(|B_{1}|),1}$ and the word $\mathfrak{w}'_{1,1}$ such that we apply Algorithm \ref{alg:embed} to  $\mathfrak{w}'_{1,1}$ and $w_{\tau_{1}(|B_{1}|),1}$. We proceed in this way $|B_{1}|$ times where at each step we replace $\mathfrak{w}_{1}$ by the first word we obtain after reverting Algorithm \ref{alg:embed}. Then we work with $B_{2},\ldots , B_{d(\varsigma)}$ in the similar fashion.

\noindent 
Now we enter into one of the main parts of the proof. Here we discuss the procedure to invert Algorithm \ref{alg:embed} to reconstruct the words. First of all given $\mathfrak{w}_{1}$, in order to reconstruct $w_{\tau_{1}(|B_{1}|),1}$ and $\mathfrak{w}'_{1,1}$ we need to choose three points on the word $\mathfrak{w}_{1}$. The first and third point have the same label and they determine where the word $w_{\tau_{1}(|B_{1}|),1}$ is cut to implement the algorithm. The second point determines the endpoint of the word $\mathfrak{w}'_{1,1}$. Also observe that once the first point is chosen, the third point comes exactly after $l(\mathfrak{w}'_{1,1})-1$ steps. However the second point can be anywhere in between the first and the third point. 

\noindent 
In the next part we prove that the first point can't be arbitrary. Here we consider the following cases. Firstly it can happen that the edge $\{ \alpha, \beta \}$ appears exactly once in each of the words $\mathfrak{w}'_{1,1}$ and $w_{\tau_{1}(|B_{1}|),1}$. In this case we shall prove that the vertex $\alpha$ is a type $j\ge 2$ instant. At this point recall from Algorithm \ref{alg:embed} that both appearances of the edge $\{ \alpha, \beta\}$ happen in the order $(\alpha,\beta)$ in the word $\mathfrak{w}_{1}$. Also this edge is the first edge in $w_{\tau_{1}(|B_{1}|),1}$ which is repeated in $\mathfrak{w}'_{1,1}$. So the level of the first point can not be reached after closing an edge from right to left within $l(\mathfrak{w}'_{1,1})-1$ steps after first point. Also since the edge $\{\alpha, \beta \}$ appears exactly once in both $\mathfrak{w}'_{1,1}$ and $w_{\tau_{1}(|B_{1}|),1}$, the edge is not closed within $l(\mathfrak{w}'_{1,1})-2$ steps after first point. So the only way to reach the vertex $\alpha$ is, another level has the label $\alpha$. Hence $\alpha$ is a type $j \ge 2$ vertex. 

\noindent 
In all the other cases since the edge $\{ \alpha, \beta \}$ is traversed at least twice in the word $\mathfrak{w}'_{1,1}$, at least one of the vertices $\alpha,\beta$ is a type $j \ge 2$ vertex.

\noindent
Now we consider two cases depending on the position of the third point. 
\begin{enumerate}[(i)]
\item The third point is at a different level than the first point.
\item The third point is at the same level as the first point. 
\end{enumerate}
We at first consider case (i).
Since $\{\alpha,\beta\}$ is the first edge in $w_{\tau_{1}(|B_{1}|),1}$ which is repeated in $\mathfrak{w}'_{1,1}$, if we look at $l(\mathfrak{w}'_{1,1})-1$ steps after first point, the walk never falls down of the level of the first point. As a consequence, within the $l(\mathfrak{w}'_{1,1})-1$ steps after first point there is always an open edge incident to the vertex $\alpha$. This implies if the third point is at a different level than the first point, then the level of the third point at a level of a type $j \ge 2$ open instant. Hence in order to choose the first and the third point, one needs to look at the skeleton word, choose two appearances of a type $j \ge 2$ instant and look at the returns to these chosen levels. Now we spend some time on the total number of words in this case. Like usual we at first fix the skeleton word and fix two instants where same vertex is repeated as type $j\ge 2$ instants. Let $m_{2}$ be the length of the skeleton word between these two points while $m_{1}+m_{2}$ be the total length of the skeleton word. Then the total number of Dyck paths in between the chosen points is given by $\frac{m_{2}+1}{l(\mathfrak{w}'_{1,1})}\binom{l(\mathfrak{w}'_{1,1})}{\frac{l(\mathfrak{w}'_{1,1})+m_{2}+1}{2}}$. On the other hand the number of Dyck paths in the other part can be chosen in $\frac{m_{1}+1}{l(w_{\tau_{1}(|B_{1}|),1})}\binom{l(w_{\tau_{1}(|B_{1}|),1})}{\frac{l(w_{\tau_{1}(|B_{1}|),1})+m_{1}+1}{2}}$ ways.  	

\noindent 
Now we look at case (ii). We further reduce it into the following cases: 
\begin{enumerate}[(a)]
\item The whole word $\mathfrak{w}'_{1,1}$ is inside a Dyck path. 

In this case we choose a point on the word $w_{\tau_{1}(|B_{1}|),1}$ and from that point we choose a Dyck path of length $l(\mathfrak{w}'_{1,1})-1$. Next just after last point of the Dyck path we create an upward edge with endpoint $\beta$. Considering the first edge in the Dyck path to be $(\alpha, \beta )$ with starting point $\alpha$, the upward edge after the last point of the Dyck path is a type $j\ge 2$ instant. The words in this case is chosen in the following way. We pick a point on the word $w_{\tau_{1}(|B_{1}|),1}$ and place a Dyck path of length $l(\mathfrak{w}'_{1,1})-1$ immediately after that point. So the the choices for the word $\mathfrak{w}'_{1,1}$ is simply $C_{\frac{l(\mathfrak{w}'_{1,1})-1}{2}}$.

\item The word $\mathfrak{w}'_{1,1}$ spans beyond a Dyck path. Observe that the first point in this case has to be in the same level as a point on the skeleton word. Here we divide it into further two cases depending on the edge $\{\alpha,\beta\}$ after the third point belonging or not belonging to a Dyck path.   

\noindent 
\textbf{Case I:} Here we assume that the edge $\{ \alpha, \beta \}$ after the third point belongs to the skeleton word. Hence the edge $\{ \alpha,\beta \}$ after the first point belongs to skeleton word.  Observe that here the instant immediately after the first point and the instant immediately after the third point are type $j \ge 2$. Hence here we arrive at a situation similar to case (i). The only difference is instead of choosing the type $j$ instants as the first and the third point we choose the points immediately before them. The calculation of the number of words is also the same as case (i). 

\noindent 
\textbf{Case II:} Here we assume that the edge $\{ \alpha, \beta \}$ after the third point belongs to a Dyck path. Since the first and the third point is in the same level of a point in the skeleton word, the calculation of the number of words is also same as case (i). However here we have an additional constraint that the instant immediately after the third point is a type $j \ge 2$ instant and the third point is at the same level as a point on the skeleton word. So it is equivalent to choose a point on the skeleton word and construct the type $j \ge 2$ instant from that level. This will reduce the count. One might look at the detailed explanation below. 
\end{enumerate}
We now prove Theorem \ref{thm:jointmom}. We only prove part $1$ and $2$ of it. Further for convenience we assume that $l=2$. The case for general $l$ can be proved by repeated use of the arguments given here. The fundamental idea of the proof is as follows: We shall prove that only case (i) and \textbf{Case I} of part (b) of case (ii) give higher contribution. All the other cases give a negligible contribution. Further in these cases we know that with high probability $m_{1}, m_{2}$ are of $O(n^{\frac{1}{3}})$ while $l(\mathfrak{w}'_{1,1})$ is of $O(n^{\frac{2}{3}})$. This makes the value 
$\frac{m_{2}+1}{l(\mathfrak{w}'_{1,1})}\binom{l(\mathfrak{w}'_{1,1})}{\frac{l(\mathfrak{w}'_{1,1})+m_{2}+1}{2}}= 2^{l(\mathfrak{w}'_{1,1})}O\left(\frac{1}{l(\mathfrak{w}'_{1,1})}\right)$. The factor $\left(\frac{1}{l(\mathfrak{w}'_{1,1})}\right)$ cancels out with the possible choices of the second point which is also of $O(l(\mathfrak{w}'_{1,1}))$. As a result we get the total contribution is of $O(1)$. In the next part we formalize these arguments.

\noindent 
We now consider each case discussed in the proof separately.\\
(i): This is the most important case and is responsible for the main contributions. We shall use the results in Propositions \ref{prop:ge3} and \ref{prop:2times}. First of all observe that 
\begin{equation}
\begin{split}
&\sum_{w_{1},w_{2}}\left( \frac{1}{n} \right)^{\frac{k_{1}+k_{2}}{2}}\E\left[ \left(X_{w_{1}}-\E[X_{w_{1}}]\right)\left(X_{w_{2}}-\E[X_{w_{2}}]\right) \right]\\
& \sum_{w_{1},w_{2}} \left(\frac{1}{n}\right)^{\frac{k_{1}+k_{2}}{2}} \left(\E\left[ X_{w_{1}} X_{w_{2}} \right]- \E\left[ X_{w_{1}} \right]\E\left[ X_{w_{2}} \right]\right).
\end{split}
\end{equation}
Now if $w_{1}$ and $w_{2}$ do not share an edge then $\E[X_{w_{1}}X_{w_{2}}]=\E[X_{w_{1}}]\E[X_{w_{2}}]$. So we shall ignore such words. Among the remaining words, we have dealt with the term $\sum_{w_{1}w_{2}} \left( \frac{1}{n} \right)^{\frac{k_{1}+k_{2}}{2}} \E[X_{w_{1}}]\E[X_{w_{2}}]$ in Propositions \ref{prop:ge3} and \ref{prop:2times}. So we only need to consider the first term. Also following the notation used in the proof so far, we shall denote $w_{1}$ by $\mathfrak{w}'_{1,1}$ and $w_{2}$ by $w_{\tau_{1}(|B_{1}|),1}$. We apply Algorithm \ref{alg:embed} to these words to get $\mathfrak{w}_{1}$. Let the parameters be defined analogously for the word $\mathfrak{w}_{1}$. Then %Now assume that $N_{1},N_{2},L_{1}, L_{2}$ be the corresponding parameters for $\mathfrak{w}'_{1,1}$ $w_{\tau_{1}(|B_{1}|),1}$ respectively. Then 
\begin{equation}\label{eq:joinmomcasei}
\begin{split}
&\sum_{\mathfrak{w}'_{1,1},w_{\tau_{1}(|B_{1}|),1}}\left( \frac{1}{n} \right)^{\frac{l(\mathfrak{w}'_{1,1})+ l(w_{\tau_{1}(|B_{1}|),1})-2}{2}} \E\left[ X_{\mathfrak{w}'_{1,1}}X_{w_{\tau_{1}(|B_{1}|),1}} \right]  \\     
& =\sum_{\mathfrak{w}_{1}} \sum_{\mathfrak{w}'_{1,1},w_{\tau_{1}(|B_{1}|),1}~|~ f(\mathfrak{w}'_{1,1},w_{\tau_{1}(|B_{1}|),1})= \mathfrak{w}_{1}} \left( \frac{1}{n} \right)^{\frac{l(\mathfrak{w}'_{1,1})+ l(w_{\tau_{1}(|B_{1}|),1})-2}{2}} \E\left[ X_{\mathfrak{w}_{1}} \right]\\
& = \sum_{\mathfrak{w}_{1}} \left( \frac{1}{n} \right)^{\frac{l(\mathfrak{w}_{1})-1}{2}}\left| f^{-1}(\mathfrak{w}_{1}) \right|  \E[X_{\mathfrak{w}_{1}}]\\
& \le \sum_{\mathfrak{w}_{1}} \left( \frac{1}{n} \right)^{\frac{l(\mathfrak{w}_{1})-1}{2}}  N^{2}  l(\mathfrak{w}'_{1,1}) \E[X_{\mathfrak{w}_{1}}] \\
& \le \left( \frac{1}{n} \right)^{\frac{l(\mathfrak{w}_{1})-1}{2}}\left( \frac{1}{2} \right)^{k-m-m'}n^{\sum_{i}k_{i}- \sum_{j}(j-1)\Gamma_{j}}\exp\left( - \frac{\left( \sum_{i} k_{i} \right)^2}{2n} \right)\sum_{\Gamma_{2}'}\frac{1}{\Gamma_{2}'!}\left( \frac{\left( \sum_{i} k_{i} \right)^2}{2} \right)^{\Gamma_{2}'}\\
& ~~~~~~~~~ \sum_{\Gamma_{j}'~|~ j \ge 3}  \frac{\left(\Gamma-L_{D}\right)!}{\prod_{j}\Gamma_{j}'! \prod_{j}\prod_{i=1}^{N_{j}} (j- L_{i,j,D})!}\frac{\left( \sum_{i} k_{i} \right)^{(\Gamma - L_{D})+ \sum_{j}\Gamma_{j}'}}{\left(\Gamma- L_{D}\right)!}\times \prod_{i=1}^{\xi}\left( \sum_{
i}k_{i} \right)\frac{\Delta_{q_{i},D}^{\sum_{j}j\hat{L}_{j,q_{i},D}}}{\prod_{j}\hat{L}_{j,q_{i},D}!(j-1)!^{\hat{L}_{j,q_{i},D}} }\times  2^{c\Gamma}\\
& ~~~~~~~ \prod_{j}\prod_{i}\sum_{\zeta_{i,j}} \frac{j^{\zeta_{i,j}}}{\prod_{j'}\prod_{i'} \eta'_{i,j,i',j',D}!} \times \prod_{j'}\prod_{i'} \left( \eta'_{i,j,i',j',D}! \right)\\
& ~~~~~~~~ \prod_{q}\sum_{\sum_{f}\eta_{i,j,q_{f},D}} n^{\frac{\vartheta \sum_{f=1}^{m''}\sum_{j,i}\eta_{i,j,q_{f},D}}{1000}} \frac{\left( \sum_{f=1}^{m''} \sum_{j,i} \eta_{i,j,q_{f},D}\right)!}{\prod_{j}\prod_{i} \left( \sum_{f=1}^{m'} \eta_{i,j,q_{f},D} \right)!} \prod_{j}\prod_{i}\left( \sum_{f=1}^{m''} \eta_{i,j,q_{f},D} \right)! \times \\
& ~~~ \frac{1}{2^{m+m'}}\sum_{\text{parameters}}n^{m- N +1}C^{N}\\
&~~\prod_{e} \frac{1}{\left[\frac{r_{e}}{2}\right]!} \frac{(N-L)!\prod_{j}\prod_{i=1}^{N_{j}}j^{j-\delta_{i,j}}}{\prod_{j}N'_{j}! \prod_{j}\prod_{i=1}^{N_{j}}(j-L_{i,j})!}\prod_{q} \frac{\Delta_{q}^{\sum_{j,i}\eta_{i,j,q} }}{\prod_{j}\prod_{i}\eta_{i,j,q}!}\prod_{j}\prod_{i=1}^{N_{j}} \frac{j^{\zeta_{i,j}}}{\prod_{j'}\prod_{i'} \eta'_{i,j,i',j'}!} \times \\
&~~ \frac{(2m_{1}+2m_{2})^{2N-2L-2+ \sum_{j}N_{j}'}}{(2N-2L-2)!}\\
& ~~~N^{2}l(\mathfrak{w}'_{1,1})\frac{m_{2}+1}{l(\mathfrak{w}'_{1,1})}\binom{l(\mathfrak{w}'_{1,1})}{\frac{l(\mathfrak{w}'_{1,1})+m_{2}+1}{2}}\frac{m_{1}+1}{l(w_{\tau_{1}(|B_{1}|),1})}\binom{l(w_{\tau_{1}(|B_{1}|),1})}{\frac{l(w_{\tau_{1}(|B_{1}|),1})+m_{1}+1}{2}} \prod_{e} \left[ \frac{r_{e}}{2} \right]!
\end{split}
\end{equation}
\eqref{eq:joinmomcasei} can be handled exactly in the same way we handled Proposition \ref{prop:2times} and \ref{prop:ge3} to get that \eqref{eq:joinmomcasei} is of $O(1)$. Further if the word $\mathfrak{w}$ has at least one edge repeated at least thrice, then the sum goes to $0$. 

It can be proved that the sums in all the other cases go to $0$.
Since the equations are somewhat long and are not very informative, for the next cases we shall only point out due to which factors the corresponding sum goes to $0$.  

\noindent 
(ii):
\\
Case (a): Here the word $\mathfrak{w}'_{1,1}$ is given by a Dyck path of length $l(\mathfrak{w}'_{1,1})$. This Dyck path can be placed in anywhere of the word. So in order to form $\mathfrak{w}_{1}$, we at first form a word of length $l(w_{\tau_{1}(|B_{1}|),1})$. Now we choose a left endpoint of an upward edge in the word (this choice is necessary since the edge $\{ \alpha,\beta \}$ from the third point is an upward edge and the given upward edge serves for that). Now the Dyck path can be chosen in $C_{\frac{l(\mathfrak{w}'_{1,1})-1}{2}}= 2^{l(\mathfrak{w}'_{1,1})-1}O\left( \frac{1}{n} \right)$. Now the position of the first point has $O\left(n^{\frac{2}{3}}\right)$ many choices. However we loose a factor of $n$, since the instant immediately after the third point is a type $j \ge 2$ instant. On the other hand the second point has $O\left( n^{\frac{2}{3}} \right)$ many choices. Compiling these we have $$\sum_{\mathfrak{w}_{1}} \left( \frac{1}{n} \right)^{\frac{l(\mathfrak{w}_{1})-1}{2}}\left| f^{-1}(\mathfrak{w}_{1}) \right|  \E[X_{\mathfrak{w}_{1}}]=O\left( \frac{n^{\frac{4}{3}}}{n^{2}} \right)=O\left( \frac{1}{n^{\frac{2}{3}}} \right)\to 0.$$
\\
Case (b):\\
\textbf{Case I}: This case is almost identical to case (i). However the only difference is, since the edge $\{ \alpha, \beta \}$ is repeated at least thrice, we have 
\[
\sum_{\mathfrak{w}_{1}} \left( \frac{1}{n} \right)^{\frac{l(\mathfrak{w}_{1})-1}{2}}\left| f^{-1}(\mathfrak{w}_{1}) \right|  \E[X_{\mathfrak{w}_{1}}] \to 0.
\]
We omit the details.\\
\textbf{Case II}: In this case the word count will be similar to case (i). However the edge $\{ \alpha,\beta \}$ immediately after the third point belongs to a Dyck path. Since the first point is in the same level of a point in the skeleton word, the number of choices of the first point is $O(m_{1})=O(n^{\frac{1}{3}})$. On the other hand since the instant immediately after the third point is a type $j \ge 2$ instant, we loose a factor of $n$. Finally the choice of the second point is of $O\left(n^{\frac{2}{3}}\right)$ and the factor $\frac{m_{2}+1}{l(\mathfrak{w}'_{1,1})}\binom{l(\mathfrak{w}'_{1,1})}{\frac{l(\mathfrak{w}'_{1,1})+m_{2}+1}{2}}= 2^{l(\mathfrak{w}'_{1,1})}O\left(\frac{1}{l(\mathfrak{w}'_{1,1})}\right)$. Compiling these we get
\begin{equation*}
\sum_{\mathfrak{w}_{1}} \left( \frac{1}{n} \right)^{\frac{l(\mathfrak{w}_{1})-1}{2}}\left| f^{-1}(\mathfrak{w}_{1}) \right|  \E[X_{\mathfrak{w}_{1}}] = O\left( \frac{n^{\frac{1}{3}+ \frac{2}{3}}}{n^{\frac{2}{3}+1}}\right)=O\left( \frac{1}{n^{\frac{2}{3}}} \right) \to 0.
\end{equation*}
%So given the word $\mathfrak{w}_{1}$, in order to choose the first and the third 
This concludes the proof.
\end{proof}
\noindent 
\textbf{Acknowledgment} This work is supported by an Inspire faculty fellowship. Moreover I thank Soumendu Sundar Mukherjee for many detailed discussions and Prof. Arup Bose, Prof. Riddhipratim Basu and Prof.  Rajat Subhra Hazra for useful discussions during the initial version of the paper. For the current version I am indebted to Prof. Krishna Maddaly for valuable discussions and suggestions. I also thank Prof. Arijit Chakrabarty for encouragement and supports. 
 \bibliography{PAR_SPI}
 \end{document}